\author{Massimo Grossi \\ \small Sapienza Università di Roma  \\ \footnotesize \texttt{massimo.grossi@uniroma1.it} \and  \hspace{3cm} Gabriele Mancini \\ \hspace{3cm} \small  Sapienza Università di Roma \\ \hspace{3cm} \footnotesize \texttt{gabriele.mancini@uniroma1.it} \vspace{0.5cm} \and  Daisuke Naimen \\ \small Muroran Institute of Technology \\ \footnotesize \texttt{naimen@mmm.muroran-it.ac.jp} \and Angela Pistoia \\ \small Sapienza Università di Roma \\ \footnotesize \texttt{angela.pistoia@uniroma1.it}}
\title{Bubbling nodal solutions for a large perturbation of the Moser-Trudinger equation  on planar domains}
\newtheorem{trm}{Theorem}[section]
\newtheorem{prop}[trm]{Proposition}
\newtheorem{cor}[trm]{Corollary}
\newtheorem{lemma}[trm]{Lemma}
\newtheorem{rem}[trm]{Remark}
\newtheorem{Step}{Step}
\renewcommand{\(}{\left(}
\renewcommand{\)}{\right)}
\newcommand{\T}{\mathcal{T}}
\newcommand{\wt}{\widetilde}
\newcommand{\Om}{\Omega}
\newcommand{\de}{\delta}
\newcommand{\R}{\mathbb{R}}
\newcommand{\bra}[1]{\left({#1}\right)}
\newcommand{\eps}{\varepsilon}
\newcommand{\ph}{\varphi}
\newcommand{\ra}{\rightarrow}
\newcommand{\DD}[2]{\frac{\partial #1}{\partial #2}}
\newenvironment{proof}{\noindent\emph{Proof.}}{\phantom{ } \hfill$\square$\medskip}
\newcommand{\be}{\begin{equation*}}
\newcommand{\ee}{\end{equation*}}
\newcommand{\e }{\varepsilon }
\newcommand{\bdm}{\begin{displaymath}}
\newcommand{\edm}{\end{displaymath}}
\newcommand{\ov}{\overline}
\begin{document}
\maketitle

\begin{abstract}
In this work we study the existence of nodal solutions for the   problem 
$$
-\Delta u = \lambda u e^{u^2+|u|^p} \text{ in }\Omega, u = 0 \text{ on }\partial \Omega,
$$
where $\Omega\subseteq \R^2$ is a bounded smooth domain and  $p\to 1^+$. 


 If $\Omega$ is ball,  it is known that   the case $p=1$  defines a critical threshold between the existence and the non-existence of radially symmetric sign-changing solutions. In this work  we construct a blowing-up family of nodal solutions to such problem as $p\to 1^+$, when $\Omega$ is an arbitrary  domain and $\lambda$ is small enough.  As far as we know, this is the first construction of  sign-changing solutions for a Moser-Trudinger critical equation on a non-symmetric domain. 
\end{abstract}

\section{Introduction}
Let us consider the equation
 \begin{equation}\label{p}
\Delta u+\lambda ue^{u^2+a|u|^p}=0\ \hbox{in}\ \Omega,\ u=0\ \hbox{on}\ \partial \Omega,\end{equation}
where $\Omega$ is a bounded smooth domain in $\mathbb R^2$, $\lambda$ is a positive parameter and the nonlinear term $h(u):=ue^{ a|u|^p}$,  with $a\in\mathbb R$ and $p\in[0,2)$, is a lower-order perturbation of $e^{u^2}$  according to the definition given by Adimurthi in \cite{a}.
\medskip
 
The nonlinearity $f(u)=h(u)e^{u^2}$ is critical from the view point of the Trudinger imbedding. Indeed, in view of the Moser-Trudinger inequality (see \cite{Pohozaev, Trudinger,Moser}) 
\begin{equation}\label{mt_ine}
 \sup\left\{\int\limits_\Omega e^{ u^2} \,dx \ :\ u\in H^1_0(\Omega),\  \|u\|_{H^1_0(\Omega)}^2\le4\pi\right\}<+\infty,
\end{equation}
the functional 
\begin{equation}\label{j}
J_\lambda(u):=\frac12\int\limits_\Omega |\nabla u|^2dx-\lambda\int\limits_\Omega F(u)dx,\ u\in H_0^1(\Omega),
\end{equation}
where
$F(t)=\int\limits_0^t f(s)ds,$ is well defined and its critical points are solutions to problem \eqref{p}.
Adimurthi in \cite{a} proved that $J_\lambda$ satisfies the Palais-Smale condition   in the infinite energy range $(-\infty,2\pi)$ but, as observed by Adimurthi and Prashant in \cite{ap1}, the critical nature of $f(u)$ reflects in the failure of the Palais-Smale condition at the sequence of energy levels $2\pi k$ with $k\in\mathbb N$ (see also \cite{AS}). 

\medskip
In \cite{a} Adimurthi proved the existence of a critical point of $J_\lambda$
 if the perturbation $h$ is large, i.e. $a\ge0$, 
and if 
$0<\lambda  <\lambda_1(\Omega),$
 where $\lambda_1(\Omega)$ is the first eigenvalue of $-\Delta$ with Dirichlet boundary condition ((see also \cite{AdiPositive})). Such a critical point is a positive solution to problem \eqref{p}. Successively, Adimurthi and Prashant in \cite{ap2} showed that   the condition $a\ge0$ is   necessary   to get a positive solution to \eqref{p}. Indeed, they proved  that if the perturbation $h$ is small,  i.e. $a<0$,
  then there are no positive solutions to problem \eqref{p} when the domain $\Omega$ is a  ball provided $\lambda$ is small. {The  case $a=0$ in a general domain $\Omega$ has been studied by Del Pino, Musso and Ruf \cite{dmr} using a perturbative approach. Indeed they find  multiplicity of positive solutions which blow-up in one or more points of $\Omega$ (depending on the geometry) as $\lambda\to0.$  We point out that a general  qualitative analysis  of blowing-up families of positive solutions to problem \eqref{p} has been obtained by Druet in \cite{Druet} (see also \cite{AD,DT,dmmt}). }

\medskip
As far as it concerns the existence of sign-changing solutions, Adimurthi and Yadava in \cite{ay1} proved that problem \eqref{p} has a nodal solution when $\lambda$ is small if there is the further restriction $p>1$ on the growth of the large perturbation $h$ (i.e. $a>0$).
Actually, this condition turns out to be optimal for the existence of nodal radial solutions in a ball. Indeed Adimurthi and Yadava in \cite{ay2} proved that  if   $a>0 $ and $\Omega$ is a ball, problem \eqref{p} does not have any radial sign-changing solution when $\lambda$ is small and   $p\in[0,1]$. If one drops the radial requirement, Adimurthi and Yadava in \cite{ay1} proved the existence of infinitely many sign-changing solutions in a ball whatever $\lambda>0$ is.
We point out that, in the case $a=0$, the approach of Del Pino, Musso and Ruf \cite{dmr} allows  to find sign-changing solutions which blow-up positively and negatively at least at two different points in  any domain $\Omega$ as $\lambda\to0$ (even if this is not explicitly said in their work). 

\medskip
According to the previous discussion, it turns out that when $a>0$  the case $p=1$
defines  a critical threshold  for the existence of radial   sign-changing solutions in the ball. Indeed, 
when $\Omega= B(0,1)$,  \eqref{p} has radially symmetric sign-changing solutions which blow-up as $p\to{1^+} $. The precise behavior of such solutions was studied by Grossi and Naimen in \cite{GN}. 
Therefore, { when $a>0$, it is natural to ask whether  it is possible to find sign-changing solutions to problem \eqref{p} on an arbitrary planar domain $\Omega$ which blow-up at one point in $\Omega$  as $p\to{1^+}$}.


\medskip
In this paper we give a positive answer. More precisely, let us consider the problem
\begin{equation}\label{problem feps}
\begin{cases}
-\Delta u = \lambda u e^{u^2+|u|^{1+\eps}}& \text{ in }\Omega,\\
 u  = 0 & \text{ on }\partial \Omega,
\end{cases} 
\end{equation}
where $\eps$ is a positive small parameter.
Set
\begin{equation}\label{feps}
f_{\eps}(t) = t e^{t^2+|t|^{1+\eps}}.
\end{equation}

For a given $0<\lambda<\lambda_1(\Omega)$,  let $u_0$ be a positive solution of the problem  
\begin{equation}\label{equ0}
\begin{cases}
-\Delta u_0 = \lambda f_0(u_0) & \text{ in }\Omega,\\
u_0>0 & \text{ in }\Omega,\\
u_0 = 0 & \text{ on } \partial \Omega,
\end{cases}
\end{equation}
whose existence has been established by Adimurthi in \cite{a}.
We make the following assumptions:
\begin{enumerate}[label=(A\arabic*)]
\item \label{A1} $u_0$ is non-degenerate, i.e. there is no non-trivial solution $\ph \in H^1_0(\Omega)$ of the equation 
\begin{equation}
-\Delta \ph = \lambda f'_0(u_0) \ph\ \hbox{in}\ \Omega,\ \ph=0\ \hbox{on}\ \partial\Omega.
\end{equation}
\item \label{A2} $ u_0$ has a $C^1-$stable critical point $\xi_0\in \Omega$ such that $u_0(\xi_0)>\frac{1}{2}$.   
\end{enumerate}
Then, we will show that \eqref{problem feps} admits a family of  sign-changing solutions which blow-up at $\xi_0$ with residual mass $-u_0$  as $\eps \to 0$, namely:
\begin{trm}\label{main result}
For $0<\lambda< \lambda_1(\Omega)$, let $u_0$ be a solution of \eqref{equ0} such that \ref{A1} and \ref{A2} are satisfied. Let also $\xi_0$ be as in \ref{A2}. Then there exist $\eps_0>0$  and a family $(u_\eps)_{0<\eps<\eps_0}$ of sign-changing solutions to \eqref{problem feps} such that:
\begin{itemize}
\item $\displaystyle{\max_{\ov{B(\xi_0,r)}} u_\eps \to +\infty}$ as $\eps \to 0$, for any $0<r<d(\xi_0,\partial \Omega)$. 
\item $u_\eps \to - u_0$ weakly in $H^1_0(\Omega)$ and in $C^1(\ov \Omega \setminus \{\xi_0\})$.
\end{itemize}
\end{trm}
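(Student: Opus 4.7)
The plan is to perform a Lyapunov-Schmidt finite-dimensional reduction, in the spirit of Del Pino-Musso-Ruf \cite{dmr} but with two new ingredients: the residual profile is now the non-trivial positive solution $-u_0$ of \eqref{equ0}, and the large perturbation $e^{|u|^{1+\eps}}$ enters the concentration scale. Concretely, I look for
$$u_\eps = -u_0 + PU_{\delta,\xi} + \phi,$$
where $PU_{\delta,\xi}$ is the $H^1_0(\Om)$-projection of a Moser-Trudinger bubble
$$U_{\delta,\xi}(x) = \mu + \frac{1}{\mu}\,w\!\left(\frac{x-\xi}{\delta}\right),\qquad w(y) = -\log\!\left(1+\tfrac{|y|^2}{8}\right),$$
with concentration height $\mu=\mu_\eps\to+\infty$ and radius $\delta=\delta_\eps\to 0$, the point $\xi$ a free parameter near $\xi_0$, and $\phi$ a small remainder. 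Expanding $(-u_0+PU_{\delta,\xi})^2 + |{-u_0+PU_{\delta,\xi}}|^{1+\eps}$ around the peak produces a scale relation of the form $\lambda\mu^2\delta^2 e^{\mu^2+\mu^{1+\eps}-2\mu u_0(\xi_0)+\cdots}\sim 1$, and $\mu_\eps$ is fixed by matching the perturbative term $\mu^{1+\eps}$ with the shift $2\mu u_0(\xi_0)$, which forces $\mu_\eps^\eps\to 2u_0(\xi_0)$; the hypothesis $u_0(\xi_0)>\frac{1}{2}$ in \ref{A2} is exactly what guarantees this limit is $>1$, hence $\mu_\eps\to+\infty$ in a non-degenerate way. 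With this choice the error $R_\eps:=\Delta(-u_0+PU_{\delta,\xi})+\lambda f_\eps(-u_0+PU_{\delta,\xi})$ is shown to be $o(1)$ in a suitable weighted $L^\infty$-norm $\|\cdot\|_{**}$.

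The next step is the linear theory for $L_\eps\phi := -\Delta\phi - \lambda f_\eps'(-u_0+PU_{\delta,\xi})\phi$. Let $Z_0,Z_1,Z_2$ denote the dilation and translation derivatives of $U_{\delta,\xi}$. By a contradiction/blow-up argument I would prove that, on the subspace of $H^1_0(\Om)$ orthogonal to $\{PZ_0,PZ_1,PZ_2\}$, the bound $\|\phi\|_*\le C\,\|L_\eps\phi\|_{**}$ holds uniformly for small $\eps$. Two limiting problems arise in such a blow-up: the linearized Liouville equation in $\R^2$, whose bounded solutions are spanned by $\{Z_0,Z_1,Z_2\}$ and are thus ruled out by orthogonality; and the equation $-\Delta\ph=\lambda f_0'(u_0)\ph$ in $\Om$, whose only $H^1_0$-solution is zero by the non-degeneracy \ref{A1}. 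A standard contraction mapping then produces $\phi=\phi_\eps(\delta,\xi)$ and Lagrange multipliers $c_i(\delta,\xi)$ solving
$$L_\eps\phi_\eps = -R_\eps + N_\eps(\phi_\eps) + \sum_{i=0}^2 c_i\, PZ_i,$$
with $\|\phi_\eps\|_*=o(1)$.

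Finally I would remove the Lagrange multipliers through the reduced functional $\wt J_\eps(\delta,\xi):=J_\lambda(-u_0+PU_{\delta,\xi}+\phi_\eps(\delta,\xi))$, whose critical points correspond to genuine solutions of \eqref{problem feps}. A careful asymptotic expansion will show that, after subtracting an $\eps$-dependent constant, the $\xi$-dependence of $\wt J_\eps$ is at leading order a positive multiple of $u_0(\xi)$, with the coupling entering through $\int_\Om\nabla u_0\cdot\nabla PU_{\delta,\xi}$ and the analogous cross terms in the nonlinear part of the energy. The $C^1$-stability of $\xi_0$ assumed in \ref{A2} then produces a persistent critical point $\xi_\eps\to\xi_0$, while the scalar equation $\partial_\delta\wt J_\eps=0$ is satisfied by the $\delta_\eps$ already selected at the matching stage, closing the argument.

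\textbf{Main obstacle.} The most delicate step is the linear theory. Unlike in the $a=0$ setting of \cite{dmr}, the potential $f_\eps'(-u_0+PU_{\delta,\xi})$ carries an extra $\mu_\eps^\eps$-scale coming from the large perturbation $e^{|u|^{1+\eps}}$, and the non-zero background $-u_0$ deforms the profile of the bubble in the exponent in a non-trivial way. Consequently the blow-up analysis must be performed simultaneously in the inner scale $|x-\xi|\lesssim\delta$ and the outer scale $|x-\xi|\gtrsim\delta$, with care that the inner and outer limits jointly exclude non-trivial kernel elements; it is precisely at this point that \ref{A1}, \ref{A2} and the strict inequality $u_0(\xi_0)>\frac{1}{2}$ are all needed at once.
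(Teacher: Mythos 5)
Your overall strategy — Lyapunov–Schmidt with a Liouville bubble glued to the negative residual profile $-u_0$, a blow-up linear theory splitting into the linearised Liouville equation and the linearisation at $u_0$ (ruled out by \ref{A1}), and the scale relation $\mu_\eps^\eps\to 2u_0(\xi_0)$ forcing $u_0(\xi_0)>\tfrac12$ — is the right scaffolding and correctly identifies the role of both hypotheses. However, there are two gaps, and the second one is exactly the point the paper flags as the main new difficulty.

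\textbf{First gap: the ansatz is not accurate enough.} You propose the approximate solution $-u_0+PU_{\delta,\xi}$. The paper instead uses $\alpha PU_{\delta,\mu,\xi}-V_{\eps,\alpha,\xi}$ with $V_{\eps,\alpha,\xi}=v_\eps+\alpha w_{\eps,\xi}+\alpha^2 z_{\eps,\xi}$, where $v_\eps$ is the positive solution of the $\eps$-problem (not the $\eps=0$ problem) and $w_{\eps,\xi},z_{\eps,\xi}$ solve the linear correction equations \eqref{eqweps}--\eqref{eqzeps}. Without $v_\eps$ you carry a zeroth-order residual of size $\|v_\eps-u_0\|_{C^1}$; without $w_{\eps,\xi}$ and $z_{\eps,\xi}$ the error $R$ in the outer region $\Omega\setminus B(\xi,\rho_2)$ is only $O(\alpha)$ (the first- and second-order Taylor terms of $f_\eps$ around $-v_\eps$ are uncancelled) rather than the $O(\alpha^3)$ achieved in Lemma \ref{Routside}. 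An $O(\alpha)$ error is too large: the linear theory yields $\|\ph_\eps\|\lesssim\|R\|_\eps$, so $\ph_\eps$ would then be of the same size as the leading $\alpha$-terms, and the finite-dimensional reduction step cannot distinguish the genuine $\xi$-dependence from the remainder.

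\textbf{Second gap: the reduced energy functional does not see $\xi$ at the right scale.} You propose to choose $\xi$ by a $C^1$-stable critical point of the reduced functional $\widetilde J_\eps(\delta,\xi)$. This is precisely what fails here. The concentration radius satisfies $\log\frac1\delta\sim\frac{1}{8\alpha^2}$ (see \eqref{delta}), so $\delta$ is super-exponentially smaller than any power of $\alpha$; every $\xi$-dependent term in the energy expansion that could produce $\nabla u_0(\xi)$ at leading order is drowned by error terms of size $\alpha^k$. Turning $\partial_\xi\widetilde J_\eps=0$ into a usable condition would require uniform gradient bounds on $\ph_{\eps,\mu,\xi}$, which — as the paper notes — are hard to obtain for Moser–Trudinger critical problems. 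The paper's way around this is not the reduced functional but a Pohozaev-type identity (Step 3 of Proposition \ref{ki}): multiplying the reduced equation \eqref{FinalEqu} by $\partial_{x_i}u_\eps$ and integrating by parts, the bulk terms involving $f_\eps(u_\eps)$ cancel exactly by the divergence theorem, and what survives is the boundary flux $\int_{\partial\Omega}\partial_{x_i}u_\eps\,\partial_\nu u_\eps\,d\sigma$. This is where the correction $w_{\eps,\xi}$ earns its keep: after another Pohozaev identity for $v_\eps$, the boundary term reduces to $8\pi\alpha\,\partial_{x_i}v_\eps(\xi)+O(\alpha^2)$, giving a clean equation $\nabla v_\eps(\xi)=O(\alpha)$ that converges to $\nabla u_0(\xi)=0$ and is then solved by the $C^1$-stability in \ref{A2}. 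So the choice of $\xi$ comes from integral identities, not from the energy, and the refined ansatz is indispensable to make those identities usable.
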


Let us make some comments about assumtpions \ref{A1} and \ref{A2}.
\begin{rem}
\begin{itemize}
\item The solution $u_0$ to problem \eqref{equ0} turns out to be non-degenerate when $\Omega$ is the ball as proved by Adimurthi, Karthik and Giacomoni in \cite{akg}.
In a work in progress, Grossi and Naimen are going to prove that the solution is also non-degenerate when $\Omega$ is convex and symmetric (see \cite{gn2}). Actually, we believe that the non-degeneracy condition holds true for most domains $\Omega$ and positive parameters $\lambda.$ Indeed, one could use similar arguments to those used   by Micheletti and Pistoia in \cite{mipi}   for a class of singularly perturbed equations.
\item We remind that $\xi_0$ is a $C^1-$stable critical point  of $u_0$ if the Brouwer degree ${\rm deg}\(\nabla u_0, B(\xi_0,r),0\)\not=0.$ In particular, any strict local maximum point of $u_0$ is $C^1-$stable. We point out that by Adimurthi and Druet \cite{AD} we can deduce that assumption \ref{A2} holds true when the parameter $\lambda$ is small enough.
\item We strongly believe that the condition $u_0(\xi_0)>\frac12$ is not purely technical, but it is necessary to build a solution which blows-up at $\xi_0.$ Indeed, we conjecture that, if $u_0(\xi_0)\le\frac12$, there does not exist any sign-changing solution which blows-up at $\xi_0$ with non-trivial residual mass $u_0$ as $\eps\to0.$ 
We point out that, in a different setting,  a similar condition was proved by Mancini and Thizy \cite{ManciniThizy} for  problem \eqref{p} on a ball with $p=1$ and $a<0$: in fact, they show  that the value at the origin of the residual mass of any non-compact sequence of radially symmetric positive solutions must be equal to  $-\frac{a}{2}$ (and we get $\frac{1}{2}$, when $a=-1$).
\end{itemize}
\end{rem}

\bigskip

Actually, we can give a more precise description of the asymptotic behavior of the solution $u_\eps$ as $\eps\to0,$ since it is build via  a Lyapunov-Schmidt procedure.  For $\delta,\mu>0$, and $\xi \in \R^n$, let us consider the functions
\begin{equation}\label{bubbles}
U_{\delta,\mu,\xi}(x)= \log\left( \frac{8\mu^2 \delta^2}{(\mu^2 \delta^2 +|x-\xi|^2)^2}\right),
\end{equation}
which describe the set of all the solutions to the Liouville equation
\begin{equation}\label{Liouville}
-\Delta U = e^{U} \quad \text{in } \R^2,
\end{equation}
under the condition $e^{U}\in L^1(\R^2)$ (see \cite{Lio,CL}). We further consider the projection $P U_{\delta,\mu,\xi}:= (-\Delta)^{-1} e^{U_{\de,\mu,\xi}}$, where $(-\Delta)^{-1}:L^{2}(\Omega) \ra H^1_0(\Omega)$ is the inverse of $-\Delta$. Namely, $P U_{\delta,\mu,\xi}$ is defined as the unique solution to 
\begin{equation}\label{0}
\begin{cases}
-\Delta PU_{\de,\mu,\xi}=-\Delta U_{\de,\mu,\xi} = e^{U_{\delta,\mu,\xi}}
&\text{ in }\Om,\\
PU_{\de,\mu,\xi}=0&\text{ on }\partial\Om.
\end{cases}
\end{equation}
Intuitively, we want to look for solutions of \eqref{problem feps} that look like 
$\alpha P U_{\delta,\mu,\xi}- u_0$ for suitable choices of the parameters $\alpha,\delta,\mu,\xi$. Unfortunately, in order to succesfully perform Lyapunov-Schmidt reduction, a more precise ansatz is necessary and we are forced to replace $u_0$ with a better approximation of the solutions. First, the non-degeneracy assumption \ref{A1} allows to find a positive solution $v_\eps\in C^1(\ov \Omega)$ of \eqref{problem feps} 
such that
$$
v_\eps \to u_0 \qquad \text{ in } C^1(\ov \Omega),
$$
as $\eps\to 0$. Then, we consider the function 
\begin{equation}\label{Veps}
V_{\eps,\alpha,\xi}: = v_\eps +\alpha w_{\eps,\xi} + \alpha^2 z_{\eps,\xi},
\end{equation}
where $\alpha\in (0,1)$ is a small positive parameter depending on $\eps,\mu,\xi$ such that $\alpha \to 0$ as $\eps\to 0$, and $w_{\eps,\xi}$ and $z_{\eps,\xi}$ are defined as the unique solutions to the  couple of linear problems
\begin{equation}\label{eqweps}
\begin{cases}
\Delta w_{\eps,\xi} + \lambda f_\eps'(v_\eps) w_{\eps,\xi} = 8\pi \lambda G_\xi f'_\eps(v_\eps) & \text{ in }\Omega,\\
w_{\eps,\xi} = 0  & \text{ on }\partial \Omega,
\end{cases}
\end{equation}
and 
\begin{equation}\label{eqzeps}
\begin{cases}
\Delta z_{\eps,\xi} +\lambda f_\eps'(v_{\eps}) z_{\eps,\xi} =  \frac{\lambda}{2} f''_\eps(-v_\eps)(8\pi G_\xi - w_\eps)^2 & \text{ in }\Omega, \\
z_\eps = 0 & \text{ on }\partial \Omega,
\end{cases}
\end{equation} 
with $G_\xi$ denoting the Green function of $\Omega$ with singularity at $\xi$, namely the distributional solution to
\begin{equation}\label{Green}
\begin{cases}
-\Delta G_\xi = \delta_\xi  & \text{ in }\Omega,  \\
G_\xi = 0  & \text{ on } \partial \Omega.
\end{cases}
\end{equation}
Problems \eqref{eqweps} and \eqref{eqzeps} are nothing but  the   linearization of problem \eqref{problem feps} around the solution $v_\eps$ and  the R.H.S.'s are the terms of the second order Taylor's expansion with respect to $\alpha$ of $f_\eps(\alpha PU_{\delta,\mu,\xi}-{V_{\eps,\alpha,\xi}})$ far away from the concentration point $\xi$
(indeed $PU_{\delta,\mu,\xi}\sim  8\pi G_\xi$ because of \eqref{green}).

\medskip
Theorem \ref{main result} follows at once by the following result:

\begin{trm}\label{Trm precise}
Let $\lambda$, $u_0$, $\xi_0$ be as in Theorem \ref{main result}. There exists $\epsilon_0>0$ and   functions $\alpha,\delta, \mu:(0,\eps_0) \to (0,+\infty)$, $\xi:(0,\eps_0)\ra \Omega$ and $\ph:(0,\eps_0)\ra H^1_0(\Omega)$   such that: 
\begin{itemize}
\item $u_\eps:= \alpha(\eps) P U_{\delta(\eps), \mu(\eps),\xi(\eps)}  - V_{\eps,\alpha(\eps),\xi(\eps)} + \ph(\eps) $  is a solution \eqref{problem feps}. 
\item $\alpha(\eps) \to 0$, $\delta(\eps) \to 0$, $\mu(\eps)\to \sqrt{8} e^{-1}$, $\xi(\eps)\to \xi_0$, and $u_\eps(\xi(\eps))\to +\infty$ as $\eps \to 0$.  
\item $\|\ph(\eps)\|_{H^1_0(\Omega)} + \|\ph(\eps)\|_{L^\infty(\Omega)} = O(e^{-\frac{\log (2 u_0(  {\xi_0} ))}{\eps}})$. 
\end{itemize} 
\end{trm}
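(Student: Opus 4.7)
The plan is to perform a Lyapunov-Schmidt reduction, writing \eqref{problem feps} as a fixed-point problem for a small remainder $\ph$ and then solving a finite-dimensional reduced problem in the parameters $(\al,\de,\mu,\xi)$. Setting $W:=\al PU_{\de,\mu,\xi}-V_{\eps,\al,\xi}$, I would seek $u_\eps=W+\ph$ and rewrite the problem as
$L_\eps\ph=R_\eps+N_\eps(\ph)$,
where $L_\eps\ph:=\Delta\ph+\lambda f'_\eps(W)\ph$ is the linearization, $R_\eps:=-\Delta W-\lambda f_\eps(W)$ is the ansatz error, and $N_\eps(\ph):=\lambda[f_\eps(W+\ph)-f_\eps(W)-f'_\eps(W)\ph]$ is the nonlinear remainder. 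The specific choice $V_{\eps,\al,\xi}=v_\eps+\al w_{\eps,\xi}+\al^2 z_{\eps,\xi}$ with $w_{\eps,\xi},z_{\eps,\xi}$ solving \eqref{eqweps}--\eqref{eqzeps} is engineered so that in the outer region where $\al PU_{\de,\mu,\xi}\approx 8\pi\al G_\xi$, the Taylor expansion of $f_\eps(W)$ around $-v_\eps$ (using $f_\eps$ odd, hence $f_\eps(-v)=-f_\eps(v)$, $f'_\eps(-v)=f'_\eps(v)$, $f''_\eps(-v)=-f''_\eps(v)$) kills all contributions up to order $\al^3$.

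The first technical step is a sharp estimate of $R_\eps$ in a suitable norm. Splitting $\Omega=B(\xi,r_0)\cup(\Omega\setminus B(\xi,r_0))$, the outer contribution is $O(\al^3)$ by the construction just described. In the inner region, the change of variables $y=(x-\xi)/(\mu\de)$ turns $U_{\de,\mu,\xi}$ into the standard Liouville bubble, and I would expand $\lambda f_\eps(W)$ around the peak value $W(\xi)\approx \al U_{\de,\mu,\xi}(\xi)-v_\eps(\xi)$, matching it against $-\al\Delta PU_{\de,\mu,\xi}=\al e^{U_{\de,\mu,\xi}}$. The leading matching forces $\mu\to\sqrt{8}\,e^{-1}$ and produces a relation linking $\al$ and $\de$ whose leading coefficient involves $2u_0(\xi_0)-1$; strict positivity of this quantity (i.e. assumption \ref{A2}) is what permits the ansatz to close and ultimately generates the sub-exponential decay $e^{-\log(2u_0(\xi_0))/\eps}$ in the final bounds.

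Next, I would invert $L_\eps$ modulo its approximate kernel, spanned by the projections of $\partial_\de U_{\de,\mu,\xi}$, $\partial_\mu U_{\de,\mu,\xi}$ and $\partial_{\xi_i}U_{\de,\mu,\xi}$. Far from $\xi$, $f'_\eps(W)\to f'_0(u_0)$, so the operator becomes a perturbation of $\Delta+\lambda f'_0(u_0)$, which is invertible by assumption \ref{A1} (the non-degeneracy transferring to $v_\eps$ for small $\eps$). Near $\xi$, the rescaled operator converges to the linearized Liouville operator, whose kernel is exactly the set of projected functions above. A standard gluing and blow-up contradiction argument should produce a uniform estimate for $L_\eps^{-1}$ on the orthogonal complement of the approximate kernel, after which a contraction mapping in a ball of the required radius in $H^1_0(\Omega)\cap L^\infty(\Omega)$ yields $\ph$ with the norm bound stated in the theorem.

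Finally, the finite-dimensional reduction requires finding $(\al,\de,\mu,\xi)$ so that the projection of the equation onto the approximate kernel vanishes. Expanding this five-dimensional system as $\eps\to 0$ produces a $\mu$-equation giving $\mu\to\sqrt{8}\,e^{-1}$, a $\xi$-equation of the form $\nabla u_0(\xi_\eps)=o(1)$ solvable by a Brouwer degree argument using the $C^1$-stability of $\xi_0$ (assumption \ref{A2}), and $\al,\de$-equations fixed by the inner matching described above. The main obstacle I expect is the inner error estimate of the first step: the critical exponential nonlinearity, together with the coupling between the large bubble $\al PU_{\de,\mu,\xi}$ and the bounded piece $v_\eps$ inside the exponent $W^2+W^{1+\eps}$, generates cross terms of the form $e^{-2\al PU\cdot v_\eps}$ and perturbations $(\al PU-v_\eps)^{1+\eps}$ that must be tracked to leading and sub-leading order in carefully chosen logarithmic weighted norms in order to achieve the sub-exponential precision $e^{-\log(2u_0(\xi_0))/\eps}$ demanded by the theorem.
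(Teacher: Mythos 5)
Your proposal correctly identifies the overall Lyapunov--Schmidt architecture used in the paper: the ansatz $\omega_{\eps,\mu,\xi}=\alpha PU_{\delta,\mu,\xi}-V_{\eps,\alpha,\xi}$ with the correction layers $w_{\eps,\xi}$, $z_{\eps,\xi}$, the matching condition that fixes $\alpha$ and $\delta$ (yielding the sub-exponential rate through $u_0(\xi_0)>\tfrac12$), the convergence to the Liouville linearization near $\xi$ and to $\Delta+\lambda f_0'(u_0)$ away from $\xi$, and the contraction on the orthogonal complement of the approximate kernel. These points agree with the paper, and your worry about the exponent cross-terms $\omega_\eps^2+|\omega_\eps|^{1+\eps}$ is well placed (the paper handles them with a mixed norm that is $L^\infty$-weighted near $\xi$, $L^{1+\alpha^2}$ on an intermediate annulus, and $L^2$ outside, splitting $\Omega$ into four, not two, regions).

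However, there is a genuine gap in the final step. You propose to close the reduction by showing that the projections of the remainder onto the translation directions $PZ_{i,\eps}$ reduce to $\nabla u_0(\xi)=o(1)$, solved by degree theory. But the actual reduced coefficients take the form $\kappa_{i,\eps}=-\kappa_{0,\eps}a_{i,\eps}+\tfrac{3\mu}{2}\,\delta\,\partial_{x_i}v_\eps(\xi)+O(\alpha\delta)$ for $i=1,2$, where $a_{i,\eps}=O(\alpha^2)$. Here $\delta$ is doubly exponentially small compared with $\alpha$ (indeed $\log\tfrac1\delta\sim\tfrac{1}{8\alpha^2}$), so $\delta\ll\alpha^k$ for every $k$ and the naive projection estimates (which produce errors measured in powers of $\alpha$) completely swamp the $\delta\,\nabla v_\eps(\xi)$ signal you want to extract. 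This is precisely the obstruction the authors single out as the main difficulty, and they overcome it with a new Pohozaev-identity argument: multiplying the reduced equation by $\partial_{x_i}u_\eps$ and integrating by parts produces massive cancellations (boundary terms of $u_\eps$ against itself vanish, and the correction equation \eqref{eqweps} is designed so that the residual boundary integral reduces exactly to $8\pi\alpha\,\partial_{x_i}v_\eps(\xi)$), yielding the sharp identity $\sum_j\kappa_{j,\eps}\int e^{U_\eps}Z_{j,\eps}\partial_{x_i}u_\eps=-8\pi\alpha\,\partial_{x_i}v_\eps(\xi)+O(\alpha^2)$ from which the $\xi$-equation is extracted at the correct scale. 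Without this step (or an equivalent cancellation mechanism), the degree argument cannot be run, because one cannot identify $\nabla u_0(\xi)$ as the leading term of the $\kappa_{i,\eps}$. A secondary, minor issue: you describe the reduction as a ``five-dimensional system'' in $(\alpha,\delta,\mu,\xi)$, but $\alpha$ and $\delta$ are already pinned by the matching system \eqref{parameters} and the approximate kernel is only three-dimensional (note $\partial_\delta U$ and $\partial_\mu U$ are proportional, since $U_{\delta,\mu,\xi}$ depends on the product $\delta\mu$), so the reduced problem is exactly three equations in the three free unknowns $(\mu,\xi_1,\xi_2)$.
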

\bigskip

Let us briefly sketch the main steps of 
 the proof of Theorem \ref{Trm precise}. First, in Section \ref{sec1}, we choose $\alpha = \alpha(\eps,\mu,\xi)$ and $\delta = \delta(\eps,\mu,\xi)$ such that the function
\begin{equation}\label{approximatesol}
\omega_{\eps,\mu,\xi} := \alpha PU_{\delta,\mu,\xi} -V_{\eps,\alpha,\xi}
\end{equation}
is an approximate solution of \eqref{problem feps}. Then, we look for solutions of \eqref{problem feps}  of the form $\omega_{\eps,\mu,\xi} + \ph$ with $\ph \in H^1_0(\Omega)$. Clearly,  \eqref{problem feps} can be written in terms of $\ph$ as 
\begin{equation}\label{eqphi}
-\Delta \ph - \lambda f_\eps '(\omega_{\eps,\mu,\xi}) \ph = R + N(\ph), 
\end{equation} 
where the error  term $R$ is defined by
\begin{equation}\label{DefR}
R= R_{\eps,\mu,\xi} := \Delta \omega_{\eps,\mu,\xi} + \lambda f_\eps(\omega_{\eps,\mu,\xi }),
\end{equation}
and the higher order term $N$  by
\begin{equation}\label{DefN}
N(\ph)=N_{\eps,\mu,\xi }( \ph):=  \lambda \left( f_\eps(\omega_{\eps,\mu,\xi} + \ph) - f_\eps(\omega_{\eps,\mu,\xi}) - f_\eps'(\omega_{\eps,\mu,\xi}) \ph \right) .
\end{equation}
Equivalently, introducing the linear operator  
\begin{equation}\label{DefL}
L\ph = L_{\eps,\mu,\xi }\ph:= \ph - (-\Delta)^{-1} (\lambda f'(\omega_{\eps,\mu,\xi})   \ph),
\end{equation}
we need to solve
\begin{equation}\label{eqph2}
 L \ph= (-\Delta)^{-1} \left( R + N(\ph)\right).
\end{equation}
A careful and delicate estimate of the error $R$ will be given   in Section  \ref{Sec R}. The behaviour of the operator $L$ will be studied in Section \ref{Sec L}. On the one hand, for functions supported away from a suitable schrinking neighborhood of $\xi$, we will show that $L$ is close to the operator $L_1 \ph :=  \ph - (-\Delta)^{-1} (\lambda f'_0(u_0)  \ph)$, which is  invertible on $H^1_0(\Omega)$  because  of the non-degeneracy assumption \ref{A1}.   On the other hand, near the point $\xi$, $L$ is close to the operator $L_{0}\ph:= \ph - (-\Delta)^{-1} (e^{U_{\delta,\mu,\xi}}\ph )$. This operator  appears in the analysis of several critical problems in dimension $2$ (see for example \cite{BP,dkm,egp}) and its behavior is well known:  although $L_0$ is not invertible, it is possible to find an approximate three-dimensional kernel $K_{\de,\mu,\xi}$ for $L_0$ by projecting on $H^1_0(\Omega)$ the three functions
$$
Z_{0,\de,\mu,\xi} (x) = \frac{\delta^2\mu^2 -|x-\xi|^2}{|x-\xi|^2+ \delta^2 \mu^2},  \qquad Z_{i,\de,\mu,\xi} (x) = \frac{2 \de \mu (x_i-\xi_i)}{|x-\xi|^2+ \delta^2 \mu^2}, 
\qquad i=1,2. 
$$ 
Such properties transfer to the operator $L$, which turns out to be invertible on the subspace $K_{\de,\mu,\xi}^\perp$   orthogonal to $K_{\de,\mu,\xi}$ in $H^1_0(\Omega)$. More precisely, denoting  by  $\pi$ and $\pi^{\perp}$ the projections of $H^1_0(\Omega)$ respectively on  $K_{\de,\mu,\xi}$ and $K_{\delta,\mu,\xi}^\perp$, we will show that $ \pi^\perp L$ is invertible on $K_{\delta,\mu,\xi}^{\perp}$. Then, it is  natural to split equation \eqref{eqph2} as 
\begin{equation}\label{system}
\begin{cases}
\ph = (\pi^\perp  L)^{-1}\pi^\perp  \left(- \Delta \right)^{-1} \left( R + N(\ph) \right), \\
\pi  L\ph = \pi    \left(- \Delta \right)^{-1} \left( R + N(\ph) \right) .
\end{cases}
\end{equation} 
The first equation of \eqref{system} will be solved in Section \ref{Sec T}, where for any $\mu>0$, $\xi$ close to $\xi_0$ and any small $\eps>0$, we will   find a solution $\ph_{\eps,\mu,\xi}$   via a contraction mapping argument on a sufficiently small ball in   $K_{\de,\mu,\xi}^{\perp}\cap L^\infty(\Omega)$.  Then, recalling that $\dim K_{\de,\mu,\xi}=3$ and using assumption \ref{A2}, we will show in Section \ref{Sec Par} that it is possible to choose the three parameters $\mu = \mu(\eps)$ and $\xi = \xi (\eps) = (\xi_1(\eps),\xi_2(\eps))$ so that  the second equation in \eqref{system} is also fullfilled.   Clearly, for such choice of $\mu$ and $\xi$, the function $\ph_{\eps,\mu(\eps),\xi(\eps)}$ solves both the equations in \eqref{system} (or, equivalently \eqref{eqphi} and \eqref{eqph2}), and 
$u_\eps: ={\omega_{\eps,\mu(\eps),\xi(\eps)}+\ph_{\eps,\mu(\eps),\xi(\eps)}}
$ is a solution of \eqref{problem feps}. 

\medskip
{It is important to point out that choice of the concentration point $\xi(\eps)$ is extremely delicate since the scaling parameter $\delta$ turns out to be much smaller than the parameter $\alpha$, whose powers control all the error terms. To overcome this difficulty, we introduce a new argument based on a precise Pohozaev-type identity. This allows us to bypass global a priori gradient estimates on the solution $\ph_{\eps,\mu,\xi}$, which are hard to obtain for Moser-Trudinger critical problems.  Our argument requires a very precise ansatz of the approximate solution $\omega_{\eps,\mu,\xi}$. In particular, the presence of the correction terms $w_{\eps,\xi}$ and $z_{\eps,\xi}$ in the expression of $V_{\eps,\alpha,\xi}$ is not merely technical, but plays a crucial role both in the estimates of the error term $R$ and in the choice of $\xi(\eps)$. 
}

\section{Construction of the approximate solution}\label{sec1}
In this section we give the detailed construction of the approximate solution $\omega_{\eps,\mu,\xi}$. Here and in the rest of the paper, we will assume that $(\mu,\xi)\in \mathcal{U}\times B(\xi_0,\sigma)$, where $ \mathcal U \Subset \R^{+}$ is an open interval containing ${ \mu_0:=\sqrt{8}e^{-1}}$, $\xi_0$ is as in the assumption \ref{A2},  and $0<\sigma<\frac{1}{2} d(\xi_0,\partial \Omega)$. By \ref{A2}, we can also assume
\begin{equation}\label{sigma small}
\inf_{B(\xi_0,\sigma)} u_0(\xi) >\frac{1}{2}. 
\end{equation}

\subsection{The main terms of the ansatz}
Let us introduce the main property of the projection of the bubble $PU_{\de,\mu,\xi }$ defined in \eqref{0}, which gives the main term of the approximate solution close to the blow-up point $\xi.$ Let  $G_\xi(\cdot)=G(\cdot,\xi)$  be the Green's function  of $-\Delta$ with Dirichlet boundary conditions introduced in \eqref{Green}
 and let  $H(\cdot,\xi)$   be its regular part, i.e.
$$
H(x,\xi) := G_\xi(x )  - \frac{1}{2 \pi} \log \frac{1}{|x-\xi|}.
$$

\begin{lemma}\label{3}
We have
\[
\begin{split}
PU_{\de,\mu,\xi}(x)=&U_{\de,\mu,\xi }(x)-\log(8\mu^2\de^2)+8\pi H(x,\xi)+\psi_{\de,\mu,\xi}(x),
\end{split}
\]
where $$\|\psi_{\de,\mu,\xi }\|_{C^{1}(\ov\Omega)} = O(\de^2),$$
uniformly with respect to $\mu \in \mathcal U $, $\xi \in B(\xi_0,\sigma)$.   
\\
In particular,
\begin{equation}\label{green}
PU_{\de,\mu,\xi}\to  8\pi G_\xi\ \hbox{in}\ C^{1}_{loc}(\ov\Omega\setminus\{\xi\}).
\end{equation}
\end{lemma}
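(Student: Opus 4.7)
The natural approach is to define
\[
\psi_{\delta,\mu,\xi}(x) := PU_{\delta,\mu,\xi}(x) - U_{\delta,\mu,\xi}(x) + \log(8\mu^2\delta^2) - 8\pi H(x,\xi)
\]
so that the identity to be proved is the definition of $\psi_{\de,\mu,\xi}$. The plan is then to show that $\psi_{\de,\mu,\xi}$ is harmonic in $\Omega$ and to estimate its boundary trace, so that by elliptic regularity for the Dirichlet problem the $C^1(\ov\Omega)$ estimate follows from the corresponding estimate on $\partial\Omega$. Since $U_{\delta,\mu,\xi}$ solves $-\Delta U_{\delta,\mu,\xi} = e^{U_{\delta,\mu,\xi}}$ on $\R^2$, and $H(\cdot,\xi)$ is harmonic in $\Omega$ (by definition, since the singularity of $G_\xi$ is absorbed in $-\frac{1}{2\pi}\log\frac{1}{|\cdot-\xi|}$), each of the four terms defining $\psi_{\de,\mu,\xi}$ has Laplacian equal to $-e^{U_{\delta,\mu,\xi}}$ or $0$, and the first two cancel, giving $\Delta \psi_{\de,\mu,\xi} = 0$ in $\Omega$.

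Next, I would explicitly compute $\psi_{\de,\mu,\xi}$ on $\partial\Omega$. Using $PU_{\delta,\mu,\xi}=0$ on $\partial\Omega$, the definition of $U_{\delta,\mu,\xi}$, and the fact that $G_\xi=0$ and hence $H(x,\xi)=\frac{1}{2\pi}\log|x-\xi|$ on $\partial\Omega$, one obtains
\[
\psi_{\delta,\mu,\xi}(x) \;=\; 2\log\!\left(1+\frac{\mu^2\delta^2}{|x-\xi|^2}\right) \qquad \text{on }\partial\Omega.
\]
Since $\xi\in B(\xi_0,\sigma)$ with $\sigma<\tfrac12 d(\xi_0,\partial\Omega)$ and $\mu$ varies in the relatively compact set $\mathcal U$, the quantity $|x-\xi|$ is uniformly bounded below for $x\in\partial\Omega$. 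A Taylor expansion of $\log(1+t)$ then gives a $C^k$ estimate of this boundary datum of order $O(\delta^2)$, uniformly in $\mu\in\mathcal U$ and $\xi\in B(\xi_0,\sigma)$. Moreover, this boundary function depends smoothly on $\xi$ (again because $|x-\xi|$ is bounded below), so its $\xi$-derivatives are also controlled.

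Finally, I would invoke standard Schauder (or Agmon–Douglis–Nirenberg) estimates for the harmonic extension: since $\partial\Omega$ is smooth and the boundary trace is $O(\delta^2)$ in, say, $C^{1,\alpha}(\partial\Omega)$, the unique harmonic extension to $\Omega$ satisfies $\|\psi_{\delta,\mu,\xi}\|_{C^{1}(\ov\Omega)}=O(\delta^2)$, uniformly in the parameters. The in-particular statement \eqref{green} then follows by direct inspection: on any compact $K\Subset\ov\Omega\setminus\{\xi\}$ one has $U_{\delta,\mu,\xi}(x)-\log(8\mu^2\delta^2) = -2\log(\mu^2\delta^2+|x-\xi|^2) \to -4\log|x-\xi|$ in $C^1(K)$, which combined with $8\pi H(x,\xi)=8\pi G_\xi(x)+4\log|x-\xi|$ (away from $\xi$) and $\psi_{\de,\mu,\xi}\to 0$ in $C^1(\ov\Omega)$ gives $PU_{\delta,\mu,\xi}\to 8\pi G_\xi$ in $C^1_{\loc}(\ov\Omega\setminus\{\xi\})$.

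The only slightly delicate point is the passage from an $L^\infty$ bound on the boundary datum to a $C^1(\ov\Omega)$ bound on the harmonic extension, which is why I would work with a $C^{1,\alpha}$ bound on the trace rather than merely a sup bound; smoothness of $\partial\Omega$ and uniform separation of $\xi$ from $\partial\Omega$ make this estimate straightforward, so no serious obstacle is expected.
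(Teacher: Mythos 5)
Your proof is correct and follows the standard approach; the paper itself simply cites \cite[Proposition 5.1]{BartP} for this lemma, and that reference's argument is precisely the one you reconstruct: $\psi_{\de,\mu,\xi}$ is harmonic in $\Omega$ (the Laplacians of $PU$ and $-U$ cancel, $H$ and the constant are harmonic), its boundary trace is explicitly $2\log\bigl(1+\mu^2\de^2/|x-\xi|^2\bigr)$ which is $O(\de^2)$ in $C^{k}(\partial\Omega)$ for every $k$ because $|x-\xi|$ is bounded away from $0$ uniformly in $\xi\in B(\xi_0,\sigma)$, and Schauder estimates together with the maximum principle transfer this to a uniform $C^1(\ov\Omega)$ bound on the harmonic extension. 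The passage to \eqref{green} by adding back the logarithmic term and identifying $-4\log|x-\xi|+8\pi H(x,\xi)=8\pi G_\xi(x)$ is also exactly right.
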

\begin{proof} See for example {\cite[Proosition 5.1]{BartP}}. 
\end{proof}

Next, let us define the main term of the approximate solution in the whole domain as
$\alpha PU_{\de,\mu,\xi}-v_\eps$ where $\alpha$ is a positive parameter approaching zero as $\eps\to0$ and $v_\eps$ is a non-degenerate solution to \eqref{problem feps}, whose existence is proved in the following lemma.
 
\begin{lemma}\label{Lemma veps} Let $\lambda$ and $u_0$ be as in Theorems \ref{main result} and \ref{Trm precise}.  There exists $\eps_0>0$, and a family of functions $(v_\eps)_{0<\eps<\eps_0} \subseteq {C^{1}(\ov \Omega) }$ such that: 
\begin{enumerate}[label=\roman*.]
\item\label{veps i} $v_\eps $ is a non-degenerate weak solution of \eqref{problem feps} for any $\eps \in (0,\eps_0)$.
\item\label{veps ii} $v_\eps \to u_0 \; \text{ in } {C^{1}(\ov \Omega)}$ as $\eps \to 0$. 
\item\label{veps iii} There exists $c>0$ such that $v_\eps(x) \ge c d(x,\partial \Omega)$ for any $x\in \Omega$, $\eps \in (0,\eps_0)$.  
\end{enumerate}
\end{lemma}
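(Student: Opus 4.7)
The natural strategy is to apply the implicit function theorem to a fixed-point formulation of \eqref{problem feps} at the pair $(0,u_0)$. Fix $\gamma\in(0,1)$, let $X:=C^{1,\gamma}(\ov\Omega)\cap H^1_0(\Omega)$, and define
\[
F:[0,\eps_0)\times X \to X,\qquad F(\eps,u):=u-(-\Delta)^{-1}\bigl(\lambda f_\eps(u)\bigr).
\]
By elliptic regularity, $(-\Delta)^{-1}:L^\infty(\Omega)\to X$ is bounded, so $F$ is well defined, and zeros of $F(\eps,\cdot)$ are exactly the weak $C^{1,\gamma}$-solutions of \eqref{problem feps}. In particular $F(0,u_0)=0$.

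The differential $\partial_u F(0,u_0)\ph=\ph-(-\Delta)^{-1}\bigl(\lambda f_0'(u_0)\ph\bigr)$ is a compact perturbation of the identity on $X$, hence an isomorphism iff it has trivial kernel; but its kernel is exactly the set of solutions to the linearized equation in \ref{A1}, which is trivial by assumption. The only delicate point for the IFT is the $C^1$ regularity of the Nemitskii operator $(\eps,u)\mapsto f_\eps(u)$ from $[0,\eps_0)\times X$ to $C^\gamma(\ov\Omega)$, because $t\mapsto|t|^{1+\eps}$ is only $C^{1,\eps}$ at $t=0$. However, $f_\eps'(t)=e^{t^2+|t|^{1+\eps}}\bigl(1+2t^2+(1+\eps)|t|^{1+\eps}\bigr)$ is continuous in $(t,\eps)$, and $\partial_\eps f_\eps(t)=f_\eps(t)\,|t|^{1+\eps}\log|t|$ extends continuously by zero at $t=0$, uniformly in $\eps\ge 0$, thanks to the $|t|^{1+\eps}$ factor. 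Together with the embedding $X\hookrightarrow L^\infty(\Omega)$, this yields the required joint $C^1$ regularity on any bounded subset of $X$. This verification is the main technical obstacle of the proof; everything else is a standard consequence of the IFT.

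The implicit function theorem then produces $\eps_0>0$ and a $C^1$ branch $\eps\mapsto v_\eps\in X$ with $F(\eps,v_\eps)=0$ and $v_\eps\to u_0$ in $X\hookrightarrow C^1(\ov\Omega)$, which gives the existence in \ref{veps i} and the convergence \ref{veps ii}. Non-degeneracy in \ref{veps i} follows because $(\eps,u)\mapsto\partial_u F(\eps,u)$ is continuous in $\mathcal L(X)$: $\partial_u F(\eps,v_\eps)$ stays close to the isomorphism $\partial_u F(0,u_0)$ and remains invertible for small $\eps$, so the only $H^1_0$-solution of $-\Delta\ph=\lambda f_\eps'(v_\eps)\ph$ is $\ph\equiv 0$. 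Finally, for \ref{veps iii}, Hopf's lemma applied to $u_0>0$ in $\Omega$ with $-\Delta u_0\ge 0$ gives $\partial_\nu u_0\le -2c<0$ on $\partial\Omega$ for some $c>0$, so $u_0(x)\ge 2c\,d(x,\partial\Omega)$ in a uniform neighborhood of $\partial\Omega$, while $u_0$ is bounded below by a positive constant on any compact interior set. The $C^1$ convergence $v_\eps\to u_0$ transfers both estimates to $v_\eps$, with constant $c$ in place of $2c$, for all $\eps$ sufficiently small.
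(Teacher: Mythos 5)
Your overall strategy coincides with the paper's: both apply the implicit function theorem to $F(\eps,u)=u-(-\Delta)^{-1}(\lambda f_\eps(u))$ at $(0,u_0)$, using \ref{A1} to make the linearization invertible, then a continuity argument for non-degeneracy and Hopf's lemma for item \emph{iii}. The difference is the ambient space: the paper works in $H^1_0(\Omega)$, where the Moser--Trudinger inequality \eqref{mt_ine} makes $u\mapsto f_\eps(u)$ a smooth map into $L^p(\Omega)$ for every $p$, so the well-posedness and $C^1$-regularity of $F$ are immediate; convergence in $C^1(\overline\Omega)$ is then recovered a posteriori by elliptic bootstrap. You instead work directly in $X=C^{1,\gamma}(\overline\Omega)\cap H^1_0(\Omega)$.

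The one point where your verification does not quite close is precisely the one you flag. You take $C^\gamma(\overline\Omega)$ as the codomain of the Nemitskii operator, but $f_\eps''$ is only $\eps$-H\"older near $t=0$ (the term $(1+\eps)^2|t|^\eps\operatorname{sgn}t$), so for fixed $\gamma>\eps$ the substitution map $u\mapsto f_\eps'(u)$ is generally not continuous from $C^{1,\gamma}$ into $C^\gamma$ near a function that vanishes — and $u_0$ does vanish on $\partial\Omega$, with linear rate by Hopf. Concretely, a difference quotient of $f_\eps'(u_n)-f_\eps'(u)$ picks up a factor of order $|x-y|^\eps$ from the modulus of continuity of $f_\eps''$, which does not dominate $|x-y|^\gamma$ uniformly as $|x-y|\to 0$. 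This is exactly the kind of failure that motivates the paper's choice of $H^1_0$. Your argument is repaired simply by targeting $C^0(\overline\Omega)$ (or $L^\infty(\Omega)$) instead of $C^\gamma$: since $\Omega\subset\R^2$, elliptic estimates give $(-\Delta)^{-1}:L^\infty(\Omega)\to W^{2,p}(\Omega)\hookrightarrow C^{1,\gamma}(\overline\Omega)$ bounded for $p$ large, so only $C^0$-valued information is needed from the nonlinearity. In that framework, $f_\eps\in C^1(\R)$ with $f_\eps'$ locally uniformly continuous is all that is required for the Nemitskii operator to be $C^1$ from $L^\infty$ to $L^\infty$, and your continuity checks on $f_\eps'$ and $\partial_\eps f_\eps$ are then exactly what is needed. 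Aside from this, everything else in your write-up (compactness of the perturbation, invertibility on $X$ implying $H^1_0$-nondegeneracy via regularity, Hopf's lemma and uniform interior positivity for \emph{iii}) matches the paper.
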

\begin{proof}
Let $F :(-1,1)\times H^1_0(\Omega)\to H^1_0(\Omega)$ be defined by
\begin{equation}\label{Fe}
F(\eps,u)=F_\eps(u):= u -(-\Delta)^{-1} (\lambda f_\eps(u)),
\end{equation}
where  $f_\eps$ is defined as in \eqref{feps}. $F$ is well defined because
  the Moser-Trudinger inequality  \eqref{mt_ine} implies that $f_\eps(u)\in L^p(\Omega)$ for any $1\le p<+\infty$ and $u\in H^1_0(\Omega)$. Moreover, it is a $C^1$-map and its   partial derivative    $DF_\eps (u):H^1_0(\Omega) \to H^1_0(\Omega)$  defined by
$$
DF_\eps(u)[\ph] = \ph - (-\Delta)^{-1} (\lambda f_\eps'(u)\ph)$$
 is a Fredholm operator of   index $0$ (since the embedding {$H^1_0(\Omega) \hookrightarrow L^p(\Omega)$} is compact).

Now, let $u_0$ be  a non-degenerate weak solution  of \eqref{equ0} such that \ref{A1} holds true.   In particular,   $F_0(u_0)=0$ and $DF_0(u_0)$ is invertible. Therefore, by the implicit function theorem, we can construct a $C^1$ curve $\eps \mapsto v_\eps\in H^1_0(\Omega)$, defined for $|\eps|<\eps_0$ such that $v_0 = u_0$, $F_\eps(v_\eps) =0 $, and $DF_\eps(v_\eps)$ is invertible for  $|\eps|<\eps_0$. Then \emph{\ref{veps i}} holds. 

Applying the Moser-Trudinger inequality \eqref{mt_ine} and standard elliptic estimates, we obtain \emph{\ref{veps ii}}. 

Hopf's lemma and the compactness of $\partial \Omega$ give $\DD{u_0}{\nu}\le -2c$ on $\partial \Omega$, for some $c>0$. Then, for $\eps$ sufficiently small, we have $\DD{v_\eps}{\nu}\le -c$, which in turn gives $v_\eps (x) \ge c d(x,\partial \Omega)$ for $x$ in a neighborhood of $\partial \Omega$. Finally, since $v_\eps \to u_0$ uniformly in $\ov \Omega$, and $u_0 >0$ in $\Omega$, we get \emph{\ref{veps iii}}.  
\end{proof}

\subsection{The correction of the ansatz}

We need to correct the ansatz in the whole domain by solving the following two  linear problems \eqref{eqweps} and \eqref{eqzeps}:
$$\begin{cases}
\Delta w_{\eps,\xi} + \lambda f_\eps'(v_\eps) w_{\eps,\xi} = 8\pi \lambda G_\xi f'_\eps(v_\eps) & \text{ in }\Omega,\\
w_{\eps,\xi} = 0  & \text{ on }\partial \Omega,
\end{cases}
$$
and
$$
\begin{cases}
\Delta z_{\eps,\xi} +\lambda f_\eps'(v_{\eps}) z_{\eps,\xi} =  \frac{\lambda}{2} f''_\eps(-v_\eps)(8\pi G_\xi - w_\eps)^2 & \text{ in }\Omega, \\
z_\eps = 0 & \text{ on }\partial \Omega.
\end{cases}
$$

\begin{lemma}\label{Lemma weps & zeps}
For any $0<\eps < \eps_0$  and any $\xi \in  \Omega$, there exist $w_{\eps,\xi}$, $z_{\eps,\xi}$ such that \eqref{eqweps} and \eqref{eqzeps} hold. {Moreover, there exists $C>0$ such that 
\begin{equation}\label{stima}
\|w_{\eps,\xi}\|_{C^{1}(\ov\Omega)}+ \|z_{\eps,\xi}\|_{C^{1}(\ov \Omega)} \le C 
\end{equation}
for $\eps \in (0,\eps_0)$, $\xi \in \Omega$. }
\end{lemma}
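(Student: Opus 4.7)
The plan is to solve both \eqref{eqweps} and \eqref{eqzeps} as uniquely solvable linear problems for the operator linearized at $v_\eps$, and then obtain the $C^1$-bound \eqref{stima} via a standard $L^p$ bootstrap. Define the bounded linear operator $T_\eps:H^1_0(\Omega)\to H^1_0(\Omega)$ by $T_\eps\varphi:=\varphi-(-\Delta)^{-1}(\lambda f'_\eps(v_\eps)\varphi)$. Since $v_\eps\in C^1(\ov\Omega)$ by Lemma \ref{Lemma veps}, the coefficient $f'_\eps(v_\eps)$ lies in $L^\infty(\Omega)$, so $T_\eps$ is a compact perturbation of the identity, hence Fredholm of index $0$. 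The non-degeneracy of $v_\eps$, provided by the same lemma, forces $\ker T_\eps=\{0\}$, and hence $T_\eps$ is an isomorphism. Rewriting \eqref{eqweps} as $T_\eps w_{\eps,\xi}=-(-\Delta)^{-1}(8\pi\lambda G_\xi f'_\eps(v_\eps))$ and noting that $G_\xi\in L^p(\Omega)$ for every $p<\infty$, the right-hand side lies in $H^1_0(\Omega)$ and a unique $w_{\eps,\xi}$ exists; exactly the same scheme applied to \eqref{eqzeps} then produces $z_{\eps,\xi}$.

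For the uniform estimate, I would use that $v_\eps\to u_0$ in $C^1(\ov\Omega)$ to conclude that $T_\eps\to T_0$ in operator norm, where $T_0$ is the analogous operator built from $u_0$. Since $T_0$ is invertible by \ref{A1}, a standard perturbation argument yields $\|T_\eps^{-1}\|\le C$ uniformly in $\eps\in(0,\eps_0)$, possibly after shrinking $\eps_0$. Combined with the uniform bounds $\|G_\xi\|_{L^p(\Omega)}\le C_p$ for $\xi\in B(\xi_0,\sigma)$ and $\|f'_\eps(v_\eps)\|_\infty\le C$, this gives a uniform $H^1_0(\Omega)$-bound for $w_{\eps,\xi}$. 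A standard elliptic bootstrap applied to $-\Delta w_{\eps,\xi}=\lambda f'_\eps(v_\eps)w_{\eps,\xi}-8\pi\lambda G_\xi f'_\eps(v_\eps)$ then upgrades this to a uniform $W^{2,p}(\Omega)$-bound for every finite $p$, and the Sobolev embedding $W^{2,p}(\Omega)\hookrightarrow C^{1,\alpha}(\ov\Omega)$ (valid for $p>2$) gives the desired $C^1(\ov\Omega)$-bound on $w_{\eps,\xi}$.

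The analogous bound on $z_{\eps,\xi}$ follows in the same way provided the right-hand side of \eqref{eqzeps} is uniformly bounded in $L^p(\Omega)$. Since $\|w_{\eps,\xi}\|_\infty\le C$ from the previous step and $\|G_\xi\|_{L^{2p}(\Omega)}\le C_p$, the only non-trivial ingredient is a uniform bound on $\|f''_\eps(-v_\eps)\|_\infty$. This is the most delicate point of the proof: the factor $|t|^{1+\eps}$ in the exponent of $f_\eps$ would naively produce a $|t|^{\eps-1}$ term in $f''_\eps$ that could blow up where $v_\eps$ is small, namely near $\partial\Omega$. However, a direct computation of $f''_\eps$ shows that such singular contributions cancel after applying the product and chain rules, leaving only factors of the form $|t|^\eps$, which stay bounded on bounded sets of $\R$ uniformly in $\eps\in[0,1)$. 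With this the bootstrap closes and $\|z_{\eps,\xi}\|_{C^1(\ov\Omega)}\le C$ follows.
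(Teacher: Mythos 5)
Your proof is correct and takes essentially the same route as the paper's, which simply cites the non-degeneracy of $v_\eps$ for existence and invokes standard elliptic estimates for the uniform $C^1$ bound, using $\sup_{\xi\in\Omega}\|G_\xi\|_{L^p(\Omega)}<\infty$ and $\sup_{\eps}\|v_\eps\|_{C^1(\ov\Omega)}<\infty$. Your flag on $f''_\eps$ is the right worry and you reach the right conclusion, though the mechanism is not really a cancellation: writing $f_\eps(t)=te^{g(t)}$ with $g(t)=t^2+|t|^{1+\eps}$, one gets $f''_\eps=e^g\bigl(2g'+tg'^2+tg''\bigr)$, and the singular $g''(t)=2+\eps(1+\eps)|t|^{\eps-1}$ always carries the compensating prefactor $t$ (coming from the overall factor $t$ in $f_\eps$), so only the bounded $|t|^\eps$ survives — whereas $f'''_\eps$ genuinely blows up like $|t|^{\eps-1}$, as the paper records and handles separately in the proof of Lemma \ref{Routside}.
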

\begin{proof} The existence of the solutions immediately follows from the non-degeneracy of the function $v_\eps$ proved in Lemma \ref{Lemma veps}. Moreover, 
since for any $p\in [1,+\infty)$ one has 
$$
\sup_{\xi \in \Omega} \|G_{\xi}\|_{L^p(\Omega)} < +\infty \quad \text{ and } \quad  \sup_{0<\eps <\eps_0}\|v_\eps\|_{C^1(\ov \Omega)} <+\infty,
$$ 
 \eqref{stima} follows by standard elliptic estimates. 
\end{proof}

Finally, we introduce the corrected  ansatz as 
\begin{equation}
\omega_{\eps,\mu,\xi} := \alpha PU_{\delta,\mu,\xi} -  V_{\eps,\alpha,\xi} 
\end{equation}
with 
\begin{equation}
V_{\eps,\alpha,\xi}: = v_\eps +\alpha w_{\eps,\xi} + \alpha^2 z_{\eps,\xi},
\end{equation}
where $v_\eps$ is defined in Lemma \ref{Lemma veps} and $w_{\eps,\xi}$ and $z_{\eps,\xi}$ as in Lemma \ref{Lemma weps & zeps}.

\subsection{The choice of parameters}

It will be necessary to choose the parameters $\alpha=\alpha(\eps,\mu,\xi)$ and $\delta=\delta(\eps,\mu,\xi)$ such that 
$\lambda f_\eps(\omega_{\eps,\mu,\xi})\sim \alpha e^{U_{\delta,\mu,\xi}}$ when ${|x-\xi|}\sim \delta.$ We point out that one of the main difficulties in this problem is that this estimates holds true only at a very small scale.

\medskip

Let us fix the values of $\alpha$ and $\delta$ according to the next lemma. The proof is based on the {contraction mapping theorem} and is postponed to the appendix.

\begin{lemma}\label{Lemma parameters}
There exist  $\eps_0>0$ and  functions $\alpha =  \alpha(\eps,\mu,\xi)$, $\beta = \beta(\eps,\mu,\xi)$ and $\delta = \delta(\eps,\mu,\xi)$, defined in $(0,\eps_0)\times\mathcal U \times B(\xi_0,\sigma)$  and continuous with respect to $\mu$ and $\xi$,    such that 
\begin{equation}\label{parameters}
\begin{cases}
\lambda\beta e^{\beta^2+\beta^{1+\eps}} = \frac{\alpha}{\delta^2},\\
2\alpha \beta + \alpha \beta^{\eps} +\eps \alpha \beta^\eps = 1,\\
\beta= 4\alpha\log \frac{1}{\delta} - V_{\eps,\alpha,\xi}(\xi)+ \alpha c_{\mu,\xi},
\end{cases}
\end{equation}
where $c_{\mu,\xi}:= -\log (8\mu^2) + 8\pi H(\xi,\xi)$ and $V_{\eps,\alpha,\xi}$ is defined in \eqref{Veps}.

Moreover, as $\eps\to 0$,  we have that 
\begin{equation}\label{alpha}
\alpha(\eps,\mu,\xi) = \frac{1}{2} e^{-\frac{\log(2u_0(\xi))+o(1)}{\eps}},
\end{equation}
\begin{equation}\label{beta}
\beta(\eps,\mu,\xi) = \frac{1}{2\alpha} - u_0(\xi) + o(1),
\end{equation}
\begin{equation}\label{delta}
\log \frac{1}{\delta(\eps,\mu,\xi)} = \frac{1+o(1)}{8\alpha^2},
\end{equation}
where $o(1)\to 0$ as $\eps \to 0$, uniformly for $\mu \in \mathcal U$ and $\xi \in B(\xi_0,\sigma)$. 
\end{lemma}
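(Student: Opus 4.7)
The plan is to eliminate $\delta$ from system \eqref{parameters} using the first equation, reducing to a two-equation problem in $(\alpha,\beta)$, and then solve it by a contraction argument in suitable logarithmic coordinates. The first equation gives explicitly $T:=\log(1/\delta)=\tfrac{1}{2}(\beta^2+\beta^{1+\eps})+\tfrac{1}{2}\log(\lambda\beta/\alpha)$; substituting this into the third equation and using the second to cancel the leading $\beta^2$-contribution yields the compatibility condition
\[
(1-\eps)\,\alpha\beta^{1+\eps} = V_{\eps,\alpha,\xi}(\xi) - \alpha c_{\mu,\xi} - 2\alpha\log(\lambda\beta/\alpha),
\]
which, together with $2\alpha\beta+(1+\eps)\alpha\beta^\eps=1$, is the reduced system.

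Motivated by the target asymptotics \eqref{alpha}--\eqref{beta}, I would next introduce the change of variables $\alpha=\tfrac{1}{2}e^{-A/\eps}$ and $\beta=(1-s)/(2\alpha)$, with $(A,s)$ expected to lie close to $(A_0,0)$ where $A_0:=\log(2u_0(\xi))>0$ by assumption \ref{A2} and \eqref{sigma small}. Since $\beta^\eps=e^A(1-s)^\eps$, the second equation of the reduced system becomes $s=(1+\eps)\alpha e^A(1-s)^\eps$, while the compatibility condition rewrites as
\[
e^A=\frac{2V_{\eps,\alpha,\xi}(\xi)-2\alpha c_{\mu,\xi}+4\alpha\log(\alpha/(\lambda\beta))}{(1-\eps)(1-s)^{1+\eps}}.
\]
Taking the logarithm in the latter, the whole system can be encoded as a fixed-point equation $(A,s)=\Phi_{\eps,\mu,\xi}(A,s)$ in $\R^2$, where $\alpha$ and $\beta$ on the right-hand side are interpreted as functions of $(A,s)$ via the substitutions above.

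I would then apply the Banach contraction principle on a small box $\{|A-A_0|\le\tau,\ |s|\le\tau\alpha(A)\}$ endowed with a suitable weighted distance. The contractivity relies on: (i) the $s$-coordinate of $\Phi_{\eps,\mu,\xi}$ is proportional to $\alpha e^A$, hence exponentially small in $1/\eps$, so all its derivatives are small; (ii) the $A$-coordinate depends on $(A,s)$ only through the values of $V_{\eps,\alpha,\xi}(\xi)$ (which is $C^1$ in $\alpha$ with bounded derivative, by Lemmas \ref{Lemma veps} and \ref{Lemma weps & zeps}) and through the logarithmic corrections $\alpha c_{\mu,\xi}$ and $\alpha\log(\alpha/\beta)$, each of order $\alpha\log(1/\alpha)=o(1)$. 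Continuity of the unique fixed point $(A(\eps,\mu,\xi),s(\eps,\mu,\xi))$ with respect to $(\mu,\xi)$ is inherited from the continuity of $V_{\eps,\alpha,\xi}(\xi)$ in $\xi$ and of $c_{\mu,\xi}=-\log(8\mu^2)+8\pi H(\xi,\xi)$ in $(\mu,\xi)$. The estimates \eqref{alpha}--\eqref{delta} finally follow by unwinding the substitution: $A_0=\log(2u_0(\xi))$ yields \eqref{alpha}; $s\sim(1+\eps)\alpha e^{A_0}\sim 2\alpha u_0(\xi)$ yields $\beta=1/(2\alpha)-u_0(\xi)+o(1)$, which is \eqref{beta}; and the dominant balance $T\sim\beta^2/2\sim 1/(8\alpha^2)$ in the first equation of \eqref{parameters} gives \eqref{delta}.

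The main obstacle is the exponential sensitivity of $\alpha=e^{-A/\eps}/2$ to perturbations of $A$: the derivative $\partial_A\alpha=-\alpha/\eps$ carries a factor $1/\eps$ that is dangerous for the contraction estimate. The delicate part of the argument is therefore a careful bookkeeping showing that every occurrence of $1/\eps$ in the Jacobian of $\Phi_{\eps,\mu,\xi}$ is always multiplied by a positive power of $\alpha$, which is exponentially small in $1/\eps$ and thus dominates any polynomial in $1/\eps$, making the Lipschitz constant of $\Phi_{\eps,\mu,\xi}$ vanish as $\eps\to 0$.
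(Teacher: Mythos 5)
Your proposal is correct in spirit and arrives at the same conclusion, but it parametrizes the contraction differently than the paper. The paper eliminates \emph{two} unknowns rather than one: it solves the second equation of \eqref{parameters} explicitly for $\alpha=(2\beta+\beta^\eps+\eps\beta^\eps)^{-1}$, solves the third equation for $\log(1/\delta^2)$ in terms of $\alpha$ and $\beta$, substitutes both into the first equation, and ends up with a single scalar equation \eqref{system1} for $\beta$, which it solves by a one-dimensional contraction in the variable $\theta_\eps:=\tfrac{1}{2}\beta^\eps-u_0(\xi)$ on a ball of shrinking radius $\rho_\eps$. You instead keep two unknowns by eliminating only $\delta$ (via the first equation) and then change variables to $(A,s)$; this produces a two-dimensional fixed-point problem. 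Both routes are legitimate and both rest on the same observation that $\alpha$ is exponentially small while all the dangerous $1/\eps$ factors appear multiplied by powers of $\alpha$. Your coordinates make the asymptotics \eqref{alpha}--\eqref{beta} especially visible; the paper's one-variable reduction is leaner and sidesteps the need for a weighted metric altogether.

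On the contraction itself, one cross-derivative in your two-dimensional setup needs more attention than your proposal gives it. You correctly flag that $\partial_A\alpha=-\alpha/\eps$ brings a $1/\eps$ factor always accompanied by $\alpha$, and that $\partial_A s$, $\partial_s s$ are tiny. But the $A$-equation
\[
A=\log\bigl(2V_{\eps,\alpha,\xi}(\xi)-2\alpha c_{\mu,\xi}+4\alpha\log(\alpha/(\lambda\beta))\bigr)-\log(1-\eps)-(1+\eps)\log(1-s)
\]
has $\partial_s A=\tfrac{1+\eps}{1-s}+O(\alpha)=1+o(1)$, which is \emph{not} small — this term does not enjoy the $\alpha$-damping you invoke elsewhere. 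Your ``suitable weighted distance'' is therefore not a cosmetic device but essential: with a weight $w$ on the $s$-slot satisfying $1<w<c\,\eps/\alpha$ (e.g.\ $w=2$ suffices for small $\eps$) the row sums of the weighted Jacobian are $o(1)+\tfrac{1+o(1)}{w}<1$ and $w\cdot O(\alpha/\eps)+o(1)<1$, and the contraction goes through. You should also center the $s$-box at $s_0:=(1+\eps)\alpha e^{A_0}\sim 2u_0(\xi)\alpha$ (or allow $\tau$ to be of order $2u_0(\xi)$ rather than small) so that $\Phi$ actually maps the box into itself, since $\Phi_s\approx 2u_0(\xi)\alpha$ is comparable to the box radius $\tau\alpha$.
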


\begin{rem}
Note that \eqref{alpha}-\eqref{delta} and \eqref{sigma small} give  $\alpha(\eps,\mu,\xi),\delta(\eps,\mu,\xi)\to 0$ and $\beta(\eps,\mu,\xi)\to +\infty$ as $\eps \to 0$, uniformly for $\mu\in \mathcal U$ and $\xi \in B(\xi_0,\sigma)$. 
\end{rem}

From now on we let $\alpha = \alpha (\eps,\mu,\xi)$, $\beta=\beta(\eps,\mu,\xi)$ and $\delta = \delta(\eps,\mu,\xi)$ be as in Lemma \ref{Lemma parameters}.  

It will be convenient to work on the scaled domain 
 $
\frac{\Omega -\xi}{\delta} :=\left\{ \frac{x-\xi}{\delta}, \quad x\in\Omega \right\}.  
$
Note that we have the scaling relation 
\begin{equation}\label{scaling}
U_{\de,\mu,\xi}(x) = \bar U_\mu \bra{ \frac{x-\xi}{\de}} - 2\log \delta,
\end{equation} 
where
\begin{equation}\label{Ubar}
\bar U_\mu (y)= U_{1,\mu,0}(y) = \log \left( \frac{8\mu^2}{(\mu^2+|y|^2)^2}\right).
\end{equation}

\begin{lemma} \label{expapproxsol} As $\eps\to 0$, we have
\begin{equation}\label{cru1}
\omega_{\eps,\mu,\xi}(\xi+ \delta y ) = \beta +\alpha \bar U_{\mu}(y) + O(\de|y|) + O(\delta^2),
\end{equation}
uniformly for $y\in B(0,\frac{\sigma}{\de})$, $\mu \in \mathcal U$ and $\xi \in B(\xi_0,\sigma)$. \\
Moreover, for any $R>0$  it holds also true that
\begin{equation}\label{motivation3}
\begin{split}
\lambda f_\eps(\omega_{\eps,\mu,\xi})(\xi + \delta y)  = \alpha e^{U_{\delta,\mu,\xi}(\xi+ \delta y)} (1+O(\alpha^2)),
\end{split}
\end{equation}
 as $\eps\to 0$ uniformly for  $y\in B(0,R)$, $\mu\in \mathcal U$ and $\xi \in B(\xi_0,\sigma)$. 
\end{lemma}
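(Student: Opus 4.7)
The plan is to prove \eqref{cru1} by substituting the scaled coordinates into the definition $\omega_{\eps,\mu,\xi}=\alpha PU_{\delta,\mu,\xi}-V_{\eps,\alpha,\xi}$ and using the third equation of \eqref{parameters} to absorb the constant term. Once \eqref{cru1} is available, \eqref{motivation3} follows by a Taylor expansion of $\omega^2+\omega^{1+\eps}$ that exploits the algebraic identities in \eqref{parameters} to produce the needed cancellations.

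For \eqref{cru1}, first apply Lemma \ref{3} together with the scaling relation \eqref{scaling} to write
\[
PU_{\delta,\mu,\xi}(\xi+\delta y) = \bar U_\mu(y) + 4\log\frac{1}{\delta} - \log(8\mu^2) + 8\pi H(\xi+\delta y,\xi) + O(\delta^2),
\]
and expand $H(\xi+\delta y,\xi)=H(\xi,\xi)+O(\delta|y|)$ via the $C^1$-regularity of $H$ in its first argument. With $c_{\mu,\xi}=-\log(8\mu^2)+8\pi H(\xi,\xi)$ this gives
\[
\alpha PU_{\delta,\mu,\xi}(\xi+\delta y)=\alpha\bar U_\mu(y)+4\alpha\log\tfrac{1}{\delta}+\alpha c_{\mu,\xi}+O(\alpha\delta|y|)+O(\alpha\delta^2).
\]
Next, the uniform $C^1$ bounds on $v_\eps$ (Lemma \ref{Lemma veps}) and on $w_{\eps,\xi},z_{\eps,\xi}$ (Lemma \ref{Lemma weps & zeps}) yield $V_{\eps,\alpha,\xi}(\xi+\delta y)=V_{\eps,\alpha,\xi}(\xi)+O(\delta|y|)$. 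Subtracting and using the third line of \eqref{parameters} to identify $4\alpha\log\tfrac{1}{\delta}+\alpha c_{\mu,\xi}-V_{\eps,\alpha,\xi}(\xi)=\beta$ proves \eqref{cru1}.

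For \eqref{motivation3}, write $\omega=\omega_{\eps,\mu,\xi}(\xi+\delta y)=\beta+\alpha\bar U_\mu(y)+E$ with $E=O(\delta)$ uniformly for $y\in B(0,R)$. Since $\beta\sim 1/(2\alpha)$ by \eqref{beta} and $\delta$ is exponentially small in $1/\alpha^2$ by \eqref{delta}, the ratio $(\alpha\bar U_\mu+E)/\beta$ is $O(\alpha^2)$. Squaring gives
\[
\omega^2=\beta^2+2\alpha\beta\bar U_\mu+O(\alpha^2)+O(\beta\delta),
\]
and a first-order Taylor expansion of $t\mapsto t^{1+\eps}$ at $t=\beta$ yields
\[
\omega^{1+\eps}=\beta^{1+\eps}+(1+\eps)\alpha\beta^\eps\bar U_\mu+O(\beta^{1+\eps}\alpha^4)+O(\beta^\eps\delta);
\]
since $\beta^\eps\to 2u_0(\xi)$ (a direct computation from \eqref{alpha}--\eqref{beta}) and $\beta\delta$ is smaller than any power of $\alpha$, all error terms are $O(\alpha^2)$. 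Summing and invoking the second equation of \eqref{parameters}, namely $2\alpha\beta+(1+\eps)\alpha\beta^\eps=1$, the linear-in-$\bar U_\mu$ terms collapse and we obtain
\[
\omega^2+\omega^{1+\eps}=\beta^2+\beta^{1+\eps}+\bar U_\mu(y)+O(\alpha^2).
\]
Exponentiating, using the first equation of \eqref{parameters} in the form $\lambda\beta e^{\beta^2+\beta^{1+\eps}}=\alpha/\delta^2$, the identity $e^{\bar U_\mu(y)}=\delta^2 e^{U_{\delta,\mu,\xi}(\xi+\delta y)}$, and the estimate $\omega/\beta=1+O(\alpha^2)$, gives
\[
\lambda f_\eps(\omega)=\lambda\omega\, e^{\omega^2+\omega^{1+\eps}}=\alpha e^{U_{\delta,\mu,\xi}(\xi+\delta y)}(1+O(\alpha^2)),
\]
which is \eqref{motivation3}.

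The main obstacle is the error tracking in the second part: one must verify that every term produced by the Taylor expansions is truly $O(\alpha^2)$ after the exponential factor is put in place. This relies crucially on the fact that $\delta$ is doubly exponentially small relative to $\alpha$ (so $\beta\delta$, $\beta^{1+\eps}\delta$, etc., are negligible) and on the precise cancellation engineered by the second equation in \eqref{parameters}; without that identity the linear-in-$\bar U_\mu$ error would be of order $1$ and the whole expansion would collapse.
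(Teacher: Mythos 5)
Your proof is correct and follows essentially the same route as the paper: Lemma \ref{3} together with the scaling relation \eqref{scaling} gives the expansion of $\alpha PU_{\delta,\mu,\xi}$, the $C^1$ bounds from Lemmas \ref{Lemma veps} and \ref{Lemma weps \& zeps} handle $V_{\eps,\alpha,\xi}$, the third equation in \eqref{parameters} identifies $\beta$, and for \eqref{motivation3} the cancellation comes from the second equation in \eqref{parameters} while the first equation plus the scaling identity $e^{\bar U_\mu(y)}/\delta^2 = e^{U_{\delta,\mu,\xi}(\xi+\delta y)}$ produces the final form. The only cosmetic difference is in the expansion of $\omega^{1+\eps}$: the paper factors $\omega\cdot\omega^\eps$ and uses a Bernoulli-type estimate to carry the extra $\eps$ factor in the remainder (getting $O(\eps\alpha^3)$), while you apply Taylor's theorem with Lagrange remainder directly and record the remainder as $O(\beta^{1+\eps}\alpha^4)=O(\alpha^3)$; both are amply within $O(\alpha^2)$, so the conclusion is unaffected.
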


\begin{proof}  Lemma \ref{3} and the scaling relation \eqref{scaling} show that,
as $\de\to 0$, we have the following expansion uniformly for $\eps \in (0,\eps_0)$,  $\mu\in \mathcal U$,  $\xi \in B(\xi_0,\sigma)$ and $y\in B(0,\frac{\sigma}{\delta})$: 
\[
\begin{split}
\omega_{\eps,\mu,\xi}  (\xi + \delta y)  &  =  \alpha \bar U_{\mu} +\underbrace{ 4\alpha \log \frac{1}{\delta}  + \alpha c_{\mu,\xi} - V_{\eps,\alpha,\mu}(\xi)}_{=\beta} +V_{\eps,\alpha,\mu}(\xi) -  V_{\eps,\alpha,\xi}(\xi+\de y) \\
&\quad + 8\pi \alpha (  H(\xi+\de y,\xi) -H(\xi,\xi))  + O(\delta^2).
\end{split}
\]
By Lemmas \ref{Lemma veps} and \ref{Lemma weps & zeps}, we know that $V_{\eps,\alpha,\mu}$ is uniformly bounded in $C^{1}(\ov \Omega)$.   Thus
\[
V_{\eps,\alpha,\mu}(\xi +\delta y )= V_{\eps,\alpha,\mu}(\xi) + O(\delta |y|).
\]
Similarly, since $H\in C^{1}(\ov \Omega \times B(\xi_0,\sigma))$, we have 
$$
H(\xi+\delta y,\xi )=H(\xi,\xi)+ O(\delta |y|).
$$
Then  estimate \eqref{cru1} is proved. 

\medskip
Now, let us prove \eqref{motivation3}.
Note that \eqref{alpha}-\eqref{delta} yield $\beta = O(\frac{1}{\alpha})$, $\delta = O(e^{-\frac{1+o(1)}{8\alpha^2}})$, and $\beta^\eps = 2u_0(\xi)+o(1) = O(1)$.  For $|y|\le R$, \eqref{cru1} implies 
\[
\omega_{\eps,\mu,\xi}(\xi + \delta y) = \beta+\alpha \bar U_{\mu}(y) + O(\delta).
\]
In particular
\begin{equation}\label{motivation1}
\omega_{\eps,\mu,\xi}(\xi + \delta y)^2  = \beta^2 +2\alpha\beta \bar U_{\mu}(y) + O(\beta \delta),
\end{equation}
and
\begin{equation}\label{motivation2}
\begin{split}
\omega_{\eps,\mu,\xi}(\xi+ \delta y )^{1+\eps} & = (\beta + \alpha \bar U_{\mu}(y) + O(\delta)) (\beta + \alpha \bar U_{\mu}(y) + O(\delta))^{\eps} \\
&= (\beta + \alpha \bar U_{\mu}(y) + O(\delta)) \beta^\eps \left(1 + \frac{\alpha}{\beta} \bar U_{\mu}(y) + O(\alpha \delta)\right)^{\eps} \\
& =  \left( \beta^{1+\eps} + \alpha \beta^{\eps} \bar U_{\mu}(y) + O(\delta) \right)\left(1+\frac{\eps \alpha}{\beta} \bar U_{\mu}(y) + O(\eps \alpha^4)\right)\\
& = \beta^{1+\eps} + \alpha \beta^{\eps} \bar U_{\mu} (y) + \eps \alpha \beta^{\eps} \bar U_{\mu}(y) + O(\eps \alpha^3).
\end{split}
\end{equation}
Then, using \eqref{parameters} we get 
$$\begin{aligned}
\lambda f_\eps(\omega_{\eps,\mu,\xi})(\xi + \delta y) & = \lambda \omega_{\eps,\mu,\xi}(\xi+ \delta y) e^{\omega_{\eps,\mu,\xi }(\xi+ \delta y )^2 + \omega_{\eps,\mu,\xi }^{1+\eps}(\xi+ \delta y) } \\
&= \lambda \beta (1+O(\alpha^2)) e^{\beta^2 +\beta^{1+\eps}+ (2\alpha \beta+ \alpha\beta^{\eps} + \alpha\eps \beta^{\eps}) \bar U_{\mu}(y) + O(\alpha^2)}\\ 
&= \underbrace{\lambda \beta e^{\beta^2 +\beta^{1+\eps}}}_{=\frac{\alpha}{\delta^2}}  e^{ \underbrace{(2\alpha \beta+ \alpha\beta^{\eps} + \alpha\eps \beta^{\eps})}_{=1} \bar U_{\mu}(y)}(1+O(\alpha^2)) e^{ O(\alpha^2)}\\ &  = \frac{\alpha}{\delta^2} e^{\bar U_{\mu}(y)}(1+ O(\alpha^2)) \\
&= \alpha e^{U_{\delta,\mu,\xi}(\xi+ \delta y)} (1+O(\alpha^2)),
\end{aligned}
$$
which proves \eqref{motivation3}.
\end{proof}

It is also useful to point out the following result which will be used in the next sections.
\begin{rem}\label{Rem beta}
Lemma \ref{3} and Lemma \ref{Lemma parameters} give $$0\le \alpha P U_{\delta,\mu,\xi}\le \beta + u_{0}(\xi) + o(1),$$ and
$$-V_{\alpha,\eps,\xi}\le \omega_{\eps,\mu,\xi}\le \beta + o(1),$$ uniformly for $x\in\Omega$, $\eps \in (0,\eps_0)$, $\mu \in \mathcal{U}$, $\xi\in B(\xi_0,\sigma)$. 
\end{rem}

\bigskip
\emph{Notation:} In order to simplify the notation, we will write $U_{\eps}$,  $\bar  U $, $V_\eps$, $\omega_{\eps}$, $w_\eps$ and $z_\eps$ instead of $U_{\delta,\mu,\xi}$, $\bar U_{\mu}$, $V_{\eps,\alpha,\xi}$, $\omega_{\eps,\mu,\xi}$, $w_{\eps,\xi}$ and $z_{\eps,\xi}$, without specifying explicitly the dependence on the parameters. It is important to point out that all the estimates of the next sections will be uniform with respect to $\mu \in \mathcal{ U}$ and $\xi \in B(\xi_0,\sigma)$.  This will allow us to choose freely the values of $\mu$ and $\xi$ in Section \ref{Sec Par}. 
Consistently, the notation $O(f(x,\eps,\alpha,\beta,\de))$ and $o(f(x,\eps,\alpha,\beta,\de))$ will be used for quantities depending on $\eps, \xi, \mu$ (and the parameters $\alpha,\beta,\delta$ of Lemma \ref{Lemma parameters}) and satisfying respectively 
$$
|O(f(x,\eps,\mu,\xi \alpha,\beta,\de))|\le C f(x,\eps,\mu,\xi,\alpha,\beta,\de)) \quad \text{ and } \quad  \frac{o(f(x,\eps,\mu,\xi\alpha,\beta,\de))}{f(x,\eps,\mu,\xi,\alpha,\beta,\de)} \to 0,
$$
as $\eps \to 0$, uniformly for $\mu \in \mathcal{ U}$ and $\xi \in B(\xi_0,\sigma)$.

\section{The estimate of the error term}\label{Sec R}
In this section we give estimates for the error term $R$ defined in \eqref{DefR}
$$R= R_{\eps,\mu,\xi} := \Delta \omega_{\eps,\mu,\xi} + \lambda f_\eps(\omega_{\eps,\mu,\xi }) 
.$$

It will be convenient to split $\Om$ into four different regions:
\begin{equation}
\Om= B(\xi,\rho_0)\cup \Big( B(\xi,\rho_1)\setminus B(\xi,\rho_0) \Big) \cup\Big(B(\xi,\rho_2)\setminus B(\xi,\rho_1)\Big)\cup\Big(\Om\setminus B(\xi,\rho_2)\Big),
\end{equation}
where  $\rho_0= \rho_0(\eps,\mu,\xi)$, $\rho_1=\rho_1(\eps,\mu,\xi)$, $\rho_2=\rho_2(\eps,\mu,\xi)$, are defined by 
\begin{equation}\label{Radii}
\rho_{0}= \delta e^{\frac{\eps}{\alpha}}, \quad  \rho_{1}=e^{-\frac{u_0(\xi)}{2\alpha}} \quad \text{ and } \quad \rho_{2} = e^{-\frac{\eps}{\alpha}}. 
\end{equation} 
Note that  
$$
\delta\ll \rho_0\ll  \rho_1\ll\rho_2\ll 1, \qquad \text{ as }\eps \to 0,
$$ 
by \eqref{alpha} and \eqref{delta}.  
\medskip
Roughly speaking, we have to split the error into four parts:
 in $B(\xi,\rho_0)$ we have $  
\lambda f_\eps(\omega_{\eps}) = \alpha e^{U_{\eps}}(1+o(1))$ (see \eqref{motivation3}) and we can use a blow-up argument to get a uniform weighted estimate on $R$.  This estimate does not hold anymore in the set $\Om\setminus B(\xi,\rho_0)$, which we further split into three parts: the region $\Om\setminus B(\xi,\rho_2)$, where $\alpha G_\xi=O(\eps)$ and a uniform estimate on $R$ can be obtained via a Taylor expansion of $f_\eps(\omega_\eps)$ (using that $\omega_\eps = -V_\eps +8\pi \alpha G_{\xi}+o(\alpha^2)$),   and  the  two  annuli $B(\xi,\rho_1)\setminus B(\xi ,\rho_0)$ and $B(\xi,\rho_2)\setminus B(\xi ,\rho_1)$, where we give quite delicate  integral estimates. The last two regions are treated separately since $\omega_{\eps}\ge c_0>0$ in $B(\xi,\rho_1)\setminus B(\xi ,\rho_0)$, while $\omega_\eps$ changes sign in $B(\xi,\rho_2)\setminus B(\xi ,\rho_1)$ (cfr. Lemma \ref{positivity} and Lemma  \ref{negative}).

\subsection{A uniform expansion in \texorpdfstring{$B(\xi,\rho_{1})$}{B(xi,r1)}}
In this section we give a more precise version of the expasions in \eqref{motivation1}-\eqref{motivation2}. 

\begin{lemma}\label{(1+x)}
For any $\eps\in (0,1)$ and $x\ge -1$, we  have 
$$
|(1+x)^{1+\eps}  - 1- (1+\eps)x |\le \eps x^2. 
$$
\end{lemma}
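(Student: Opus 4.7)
My plan is to split the two-sided bound $|(1+x)^{1+\eps} - 1 - (1+\eps)x| \le \eps x^2$ into two one-sided inequalities and prove each via a tangent-line comparison based on convexity or concavity of a power function. First I would establish the lower inequality $(1+x)^{1+\eps} \ge 1 + (1+\eps)x$, which is immediate from the convexity of $t \mapsto t^{1+\eps}$ on $[0,\infty)$ (whose second derivative $(1+\eps)\eps\, t^{\eps-1}$ is nonnegative); the tangent-line at $t=1$ is exactly $1+(1+\eps)(t-1)$, and setting $t = 1+x$ gives a Bernoulli-type inequality valid for all $x \ge -1$.

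Next I would attack the upper inequality $(1+x)^{1+\eps} \le 1 + (1+\eps)x + \eps x^2$. The key observation is that for $\eps \in (0,1)$ the function $t \mapsto t^{\eps}$ is \emph{concave} on $(0,\infty)$, so the tangent-line at $t=1$ gives $(1+x)^\eps \le 1+\eps x$ for every $x > -1$. Moreover, since $\eps<1$ we have $-1/\eps < -1$, so the hypothesis $x \ge -1$ guarantees $1+\eps x \ge 0$; together with $1+x \ge 0$ this lets me multiply the two nonnegative factors without flipping any inequalities to obtain
\[
(1+x)^{1+\eps} = (1+x)(1+x)^\eps \le (1+x)(1+\eps x) = 1 + (1+\eps)x + \eps x^2 .
\]
The endpoint $x=-1$ would be checked by direct substitution: both sides equal $0$, so the bound is sharp there.

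Combining the two one-sided estimates yields the claimed absolute-value inequality. I do not anticipate a serious obstacle: the whole argument reduces to two elementary applications of the tangent-line inequality for convex/concave power functions, with the only mild subtlety being the need to verify that $1+\eps x$ remains nonnegative on $[-1,\infty)$, which is exactly where the assumption $\eps < 1$ is used.
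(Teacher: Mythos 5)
Your proof is correct and follows essentially the same route as the paper. The paper invokes Bernoulli's inequality directly to obtain both $(1+x)^{\eps}\le 1+\eps x$ and $(1+x)^{1+\eps}\ge 1+(1+\eps)x$, then multiplies the former by $1+x\ge 0$ to get the upper bound $(1+x)^{1+\eps}\le 1+(1+\eps)x+\eps x^2$; you do the same thing but additionally explain where the two Bernoulli-type inequalities come from (convexity of $t^{1+\eps}$ and concavity of $t^{\eps}$ with tangent-line comparison at $t=1$). One small remark: the multiplication step only requires $1+x\ge 0$, so your observation that $1+\eps x\ge 0$ on $[-1,\infty)$ is not actually needed to preserve the inequality direction, though it does no harm.
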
 
\begin{proof}
According to Bernoulli's inequality we have 
\begin{equation}\label{ber1}
(1+x)^{\eps} \le 1 +\eps x
\end{equation}
and 
\begin{equation}\label{ber2}
(1+x)^{1+\eps} \ge 1+ (1+\eps) x.
\end{equation}
Since $x\ge -1$, thanks to \eqref{ber1} we have that 
\begin{equation}\label{ber3}
(1+x)^{1+\eps} \le (1+x)(1+\eps x) = 1+ (1+\eps)x+ \eps x^2.
\end{equation}
Then, the conclusion follows from \eqref{ber2} and \eqref{ber3}.
\end{proof}

\begin{lemma}\label{positivity} Set $\displaystyle{c_0:= \frac{1}{2}\inf_{\xi \in B(\xi_0,\sigma)}u_0(\xi)}$. For $x\in B(\xi,\rho_1)$, we have that 
\begin{equation}\label{inB1}
 \beta + \alpha \bar U\Big( \frac{x-\xi}{\delta}\Big) \ge c_0,
\end{equation}
for sufficiently small $\eps$. In particular, we have 
\begin{equation}\label{inB1bis}
 c_0\le \omega_{\eps} \le \beta(1+o(1)).
\end{equation}
\end{lemma}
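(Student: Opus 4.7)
My plan is to reduce the lower bound to the boundary sphere $|y| = \rho_1/\delta$ (with $y = (x-\xi)/\delta$) using the radial monotonicity of $\bar U_\mu$, to exploit an exact cancellation between $\beta$ and the logarithmic tail of $\alpha \bar U_\mu$, and finally to derive \eqref{inB1bis} from \eqref{inB1} via the uniform expansion \eqref{cru1}.

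First, for $x\in B(\xi,\rho_1)$ I would set $y = (x-\xi)/\delta$, so that $|y|\le \rho_1/\delta$. Since $\bar U_\mu(y) = \log\frac{8\mu^2}{(\mu^2+|y|^2)^2}$ is strictly decreasing in $|y|$, we have $\alpha \bar U_\mu(y) \ge \alpha \bar U_\mu(\rho_1/\delta)$. From \eqref{alpha} and \eqref{delta}, $\mu^2\delta^2/\rho_1^2 = \mu^2\exp\!\bra{u_0(\xi)/\alpha - (1+o(1))/(4\alpha^2)}$ tends to $0$ uniformly on $\mathcal U\times B(\xi_0,\sigma)$, so
\[
\alpha\bar U_\mu(\rho_1/\delta) = \alpha\log(8\mu^2) - 4\alpha\log(\rho_1/\delta) + o(1) = \alpha\log(8\mu^2) + 2u_0(\xi) - 4\alpha\log(1/\delta) + o(1),
\]
where I used $\log\rho_1 = -u_0(\xi)/(2\alpha)$ from \eqref{Radii}.

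The decisive step is to add the third identity in \eqref{parameters}, $\beta = 4\alpha\log(1/\delta) - V_\eps(\xi) + \alpha c_{\mu,\xi}$ with $c_{\mu,\xi} = -\log(8\mu^2) + 8\pi H(\xi,\xi)$: the $\pm 4\alpha\log(1/\delta)$ and $\pm\alpha\log(8\mu^2)$ terms cancel exactly, leaving
\[
\beta + \alpha\bar U_\mu(y) \ge 2u_0(\xi) - V_\eps(\xi) + 8\pi\alpha H(\xi,\xi) + o(1) = u_0(\xi) + o(1)\ge 2c_0 + o(1),
\]
because $V_\eps\to u_0$ in $C^1(\ov\Omega)$ (Lemma \ref{Lemma veps}), $\alpha\to 0$, and $H$ is bounded on $\ov\Omega\times B(\xi_0,\sigma)$. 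For $\eps$ small enough this gives \eqref{inB1}.

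For \eqref{inB1bis} I would apply \eqref{cru1}: on $B(\xi,\rho_1)$ the remainder $O(\delta|y|) + O(\delta^2)$ is $O(\rho_1)+O(\delta^2) = o(1)$, so the lower bound $\omega_\eps\ge c_0$ follows from \eqref{inB1} (shrinking $\eps_0$ again if needed). For the upper bound, $\bar U_\mu$ attains its maximum at $y=0$ with $\bar U_\mu(0) = \log(8/\mu^2) = O(1)$, hence $\alpha\bar U_\mu(y) = O(\alpha)$, and using $\alpha\beta\to 1/2$ from \eqref{alpha}-\eqref{beta} one gets $\omega_\eps\le \beta + O(\alpha) + o(1) = \beta(1+o(1))$. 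The main obstacle is that this cancellation must be carried out via the exact defining relation of $\beta$ in \eqref{parameters} rather than the asymptotic expansion \eqref{beta}: in the latter, the implicit $o(1)$ error, multiplied by the $O(1/\alpha)$ size of $\log(1/\delta)$, could easily dominate $u_0(\xi)$ and destroy the bound.
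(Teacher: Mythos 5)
Your proof is correct and follows the same structure as the paper's argument: reduce \eqref{inB1} to $|y|=\rho_1/\delta$ by radial monotonicity of $\bar U_\mu$, exploit the cancellation of the $\log(1/\delta)$ term against $\beta$, and deduce \eqref{inB1bis} from \eqref{cru1} and the bounds of Remark~\ref{Rem beta}. The one place where you differ, and where you are in fact more careful than the text, is the second equality in the chain: the paper cites \eqref{beta}--\eqref{delta} to pass from $\beta-4\alpha\log\frac{\rho_1}{\delta}+o(1)$ to $u_0(\xi)+o(1)$. Taken literally this is loose, because $4\alpha\log\frac{1}{\delta}=\frac{1+o(1)}{2\alpha}$ has an error $\frac{o(1)}{2\alpha}$, and the $o(1)$ in \eqref{delta} is only known a priori to be $O(\alpha)$, so $\frac{o(1)}{2\alpha}$ could be $O(1)$ rather than $o(1)$. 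Your substitution of the exact third relation in \eqref{parameters}, which gives $\beta-4\alpha\log\frac{1}{\delta}=-V_{\eps,\alpha,\xi}(\xi)+\alpha c_{\mu,\xi}$ with no hidden errors, removes this ambiguity and makes the cancellation rigorous without needing to track how the $o(1)$'s in \eqref{beta} and \eqref{delta} are correlated. The remainder of your argument (the $O(\rho_1)$ control from \eqref{cru1}, and the upper bound via $\alpha\bar U_\mu=O(\alpha)$ and $\alpha\beta\to 1/2$) matches the paper's use of Lemma~\ref{expapproxsol} and Remark~\ref{Rem beta}.
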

\begin{proof}
The definitons of $\ov U$ and $\rho_1$ (see \eqref{Ubar} and \eqref{Radii}), and \eqref{beta}-\eqref{delta} give
\[\begin{split}
\beta + \alpha \bar U \Big(\frac{x-\xi}{\delta}\Big)& \ge \beta + \alpha \bar U \Big(\frac{\rho_1}{\delta}\Big)\\
&= \beta - 4 \alpha \log \frac{\rho_1}{\delta} +o(1)\\
&= u_0(\xi)+o(1),
\end{split}
\]
which implies \eqref{inB1} for sufficiently small $\eps$. To get \eqref{inB1bis}, it is sufficient to apply Lemma \ref{expapproxsol} and Remark \ref{Rem beta}.  
\end{proof}

\begin{lemma}\label{expexp}
For $x\in B(\xi,\rho_1)$, we have 
\[
\omega_{\eps}^2(x) + \omega_{\eps}^{1+\eps}(x) = \beta^2+\beta^{1+\eps} + \bar U\bra{\frac{x-\xi}{\de}}+\alpha^2  \bar U^2\bra{\frac{x-\xi}{\de}}+O\bra{\eps \alpha^3 \bra{1+\bar U^2\Big(\frac{x-\xi}{\de}\Big)}}\hspace{-0.05cm}.
\]
\end{lemma}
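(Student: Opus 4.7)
The strategy is to write $\omega_\eps=\beta+s$ with $s:=\alpha\bar U+\eta$, expand $\omega_\eps^2$ directly and $\omega_\eps^{1+\eps}=\beta^{1+\eps}(1+s/\beta)^{1+\eps}$ via Lemma \ref{(1+x)}, and then use the second equation of \eqref{parameters} to force the coefficient of $\bar U$ to equal $1$.

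First, I would refine \eqref{cru1} on the smaller ball $B(\xi,\rho_1)$: going through the proof of Lemma \ref{expapproxsol} and keeping $\eta$ as a remainder term, one obtains
\[
\omega_\eps(\xi+\delta y)=\beta+\alpha\bar U(y)+\eta(y), \qquad |\eta(y)|=O(\delta(1+|y|)),
\]
hence $|\eta|=O(\rho_1)$ for $|y|\le\rho_1/\delta$. By Lemma \ref{positivity}, $\omega_\eps\ge c_0>0$ on $B(\xi,\rho_1)$, so $s/\beta=(\alpha\bar U+\eta)/\beta>-1$ and Lemma \ref{(1+x)} is applicable with $x=s/\beta$.

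Second, expanding the square directly and multiplying the estimate of Lemma \ref{(1+x)} by $\beta^{1+\eps}$ gives
\[
\omega_\eps^2=\beta^2+2\alpha\beta\bar U+\alpha^2\bar U^2+2\beta\eta+2\alpha\bar U\eta+\eta^2,
\]
\[
\omega_\eps^{1+\eps}=\beta^{1+\eps}+(1+\eps)\alpha\beta^\eps\bar U+(1+\eps)\beta^\eps\eta+O\bigl(\eps\beta^{\eps-1}(\alpha\bar U+\eta)^2\bigr).
\]
Upon summing, the coefficient of $\bar U$ equals $2\alpha\beta+(1+\eps)\alpha\beta^\eps=2\alpha\beta+\alpha\beta^\eps+\eps\alpha\beta^\eps=1$ by the second equation of \eqref{parameters}. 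This produces the desired identity with remainder
\[
E=\bigl(2\beta+(1+\eps)\beta^\eps\bigr)\eta+2\alpha\bar U\eta+\eta^2+O\bigl(\eps\beta^{\eps-1}(\alpha^2\bar U^2+\eta^2)\bigr).
\]

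The last step is to verify $E=O(\eps\alpha^3(1+\bar U^2))$. Using the asymptotics of Lemma \ref{Lemma parameters} ($\beta=O(1/\alpha)$, $\beta^\eps=O(1)$, $\beta^{\eps-1}=O(\alpha)$), the Taylor-remainder term is immediately $O(\eps\alpha^3\bar U^2)+O(\eps\alpha\eta^2)$, which lies within the allowed bound. The main obstacle is to control the remaining $\eta$-terms, of which the worst is $\beta\eta=O(\rho_1/\alpha)$, a priori much larger than the target $\eps\alpha^3$. This is where the choice of $\rho_1$ is crucial: by \eqref{alpha}, $1/\alpha=e^{O(1/\eps)}$, hence $\rho_1=e^{-u_0(\xi)/(2\alpha)}=\exp\bigl(-e^{O(1/\eps)}\bigr)$ is super-exponentially smaller than any polynomial quantity in $\eps$ and $\alpha$; in particular $\rho_1/\alpha\ll\eps\alpha^3$, and every $\eta$-contribution collapses into the target remainder, uniformly on $B(\xi,\rho_1)$ even though $\omega_\eps$ varies from $\sim\beta$ at $\xi$ down to $O(1)$ near the boundary.
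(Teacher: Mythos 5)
Your proof is correct and follows essentially the same route as the paper's: write $\omega_\eps=\beta+\alpha\bar U+(\text{small remainder})$, expand the square directly, apply Lemma \ref{(1+x)} to $|\omega_\eps|^{1+\eps}=\beta^{1+\eps}(1+s/\beta)^{1+\eps}$ using Lemma \ref{positivity} to justify $s/\beta>-1$, and invoke the second equation of \eqref{parameters} so that $2\alpha\beta+(1+\eps)\alpha\beta^\eps=1$. The one place you are more explicit than the paper is the treatment of the $O(\delta(1+|y|))=O(\rho_1)$ error from \eqref{cru1}: the paper absorbs it silently into the final $O(\eps\alpha^3(1+\bar U^2))$ by writing $O(\beta\rho_1)$ and $O(\alpha\rho_1)$ and not discussing them further, whereas you correctly isolate the worst contribution $\beta\eta=O(\rho_1/\alpha)$ and observe that $\rho_1=e^{-u_0(\xi)/(2\alpha)}$ is super-exponentially small compared with $\eps\alpha^3$, so all $\eta$-terms are dominated. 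That remark is worth having; it is the only point where the paper's proof is elliptic.
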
  
\begin{proof}
Set $y= \frac{x-\xi}{\de}\in B(0,\frac{\rho_1}{\de})$. Noting that $\ov U(y) = O(\alpha^{-2})$ and using  Lemma  \ref{expapproxsol}, we get  
\[\begin{split}
\omega_{\eps}^2(x)  = \omega_{\eps}^2(\xi+ \de y)  & = \bra{ \beta + \alpha \bar U (y) + O(\rho_{1}) }^2\\
& = \beta^2 + 2\alpha \beta \bar U(y) + \alpha^2 \bar U(y)^2 + O(\beta \rho_{1}).
\end{split}\]
Similarly, since  Lemma \ref{positivity} gives $\frac{\alpha}{\beta} \bar U(y) \ge -1 +\frac{c_0}{\beta} \ge -1$, by Lemma \ref{(1+x)} we infer 
\[\begin{split}
|\omega_{\eps}|^{1+\eps} (x) 
& =\beta^{1+\eps} \left(1 + \frac{\alpha}{\beta} \bar U(y) + O(\alpha \rho_{1}) \right)^{1+\eps} \\
& = \beta^{1+\eps}\left( 1 + (1+\eps) \Big( \frac{\alpha}{\beta} \bar U(y) +O(\alpha \rho_{1})\Big) + O\bra{\eps \bra{ \frac{\alpha}{\beta}  \bar U(y) +O(\alpha \rho_{1})}^2 } \right) \\
& = \beta^{1+\eps} + (1+\eps)\alpha \beta^{\eps} \bar U(y)  +O(\eps \alpha^3 (1+\bar U^2(y))).
\end{split}
\]
Then the conclusion follows from the second equation in \eqref{parameters}. 
\end{proof}

\subsection{Expansions in \texorpdfstring{$B(\xi,\rho_{0})$}{B(xi,r0)}}
Let us now restrict our attention to the smaller ball $B(\xi,\rho_0)$. This allows to control the term $\alpha^2 \bar U^2$ appearing in the expansion of Lemma \ref{expexp}. Indeed, since $|\bar U (y)| = -4\log |y|+O(1) $ as $|y|\to +\infty$, we have  that 
\begin{equation}\label{inBrho0}
\bar U\bra{\frac{x-\xi}{\delta}} = O\bra{\frac{\eps}{\alpha}} \quad \text{ and } \quad  \alpha^2 \bar U^2\bra{\frac{x-\xi}{\delta}} =O(\eps^2) \quad \text{ for  } x\in B(\xi ,\rho_0).  
\end{equation}

\begin{lemma}\label{RinBrho0} For  $x\in B(\xi ,\rho_{0})$, we have 
$$
R(x) = \alpha^3 e^{U_{\eps}(x)} \left(2 \bar U \Big(\frac{x-\xi}{\de}\Big)+ \bar U^2\Big(\frac{x-\xi}{\de}\Big)\right) + \alpha^4 e^{U_{\eps}(x)}O\left(1+\bar U^4\Big(\frac{x-\xi}{\de}\Big)\right).
$$
\end{lemma}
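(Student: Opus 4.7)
Since $\omega_\eps = \alpha PU_{\delta,\mu,\xi} - V_\eps$ and $-\Delta PU_\eps = e^{U_\eps}$, we split
\[
R = \Delta\omega_\eps + \lambda f_\eps(\omega_\eps) = -\alpha e^{U_\eps} - \Delta V_\eps + \lambda f_\eps(\omega_\eps).
\]
The plan is to Taylor-expand $\lambda f_\eps(\omega_\eps)$ so that the leading $\alpha e^{U_\eps}$-contribution cancels and the second-order term reproduces $\alpha^3 e^{U_\eps}(2\bar U + \bar U^2)$, and then to absorb $-\Delta V_\eps$ into the error $\alpha^4 e^{U_\eps}\,O(1+\bar U^4)$.

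The core expansion proceeds as follows. Since $B(\xi,\rho_0)\subset B(\xi,\rho_1)$, Lemma \ref{expexp} applies, and inside $B(\xi,\rho_0)$ estimate \eqref{inBrho0} forces $\alpha^2\bar U^2 = O(\eps^2)$ and $\eps\alpha^3(1+\bar U^2)=O(\eps\alpha)$, so the exponent is itself uniformly small and a further Taylor-expansion of $\exp$ yields
\[
e^{\omega_\eps^2+\omega_\eps^{1+\eps}} = e^{\beta^2+\beta^{1+\eps}}\,e^{\bar U}\bigl(1+\alpha^2\bar U^2+O(\eps\alpha^3(1+\bar U^2))+O(\alpha^4\bar U^4)\bigr).
\]
For the prefactor, Lemma \ref{expapproxsol} together with $1/\beta = 2\alpha+O(\alpha^2)$ from \eqref{beta} gives $\omega_\eps/\beta = 1+2\alpha^2\bar U+O(\alpha^3|\bar U|)+O(\alpha\rho_0)$. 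Multiplying the two expansions, and invoking both the identity $\lambda\beta e^{\beta^2+\beta^{1+\eps}} = \alpha/\delta^2$ from \eqref{parameters} and the scaling relation \eqref{scaling} (which yields $e^{U_\eps(\xi+\delta y)} = e^{\bar U(y)}/\delta^2$), I arrive at
\[
\lambda f_\eps(\omega_\eps) = \alpha e^{U_\eps}+\alpha^3 e^{U_\eps}(2\bar U+\bar U^2)+\alpha^4 e^{U_\eps}\,O(1+\bar U^4),
\]
where I use \eqref{alpha}, \eqref{delta}, and \eqref{Radii} to verify that each auxiliary cross term (such as $\alpha^4\bar U^3$, $\alpha^3|\bar U|$, $\alpha\rho_0$, $\eps\alpha^3(1+\bar U^2)$) fits into $\alpha^4 e^{U_\eps}(1+\bar U^4)$ after multiplication by $\alpha e^{U_\eps}$.

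Finally, from the PDEs \eqref{problem feps}, \eqref{eqweps}, \eqref{eqzeps} we compute
\[
-\Delta V_\eps = \lambda f_\eps(v_\eps)-\alpha\lambda f'_\eps(v_\eps)(8\pi G_\xi-w_\eps)-\tfrac{\alpha^2\lambda}{2}f''_\eps(-v_\eps)(8\pi G_\xi-w_\eps)^2+\alpha^2\lambda f'_\eps(v_\eps)z_\eps,
\]
and Lemmas \ref{Lemma veps}, \ref{Lemma weps & zeps} combined with $G_\xi = -(2\pi)^{-1}\log|\cdot -\xi|+H(\cdot,\xi)$ yield the pointwise bound $|\Delta V_\eps(x)|\le C(1+\alpha^2\log^2|x-\xi|)$ on $B(\xi,\rho_0)\setminus\{\xi\}$. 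The asymptotics \eqref{alpha}, \eqref{delta} make this bound exponentially smaller than $\alpha^4 e^{U_\eps}\gtrsim \alpha^4\delta^2/(\delta^2+|x-\xi|^2)^2$, so $-\Delta V_\eps$ is absorbed into $\alpha^4 e^{U_\eps}\,O(1)$, completing the proof.

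The hard part will be the delicate book-keeping required for the leading coefficients to conspire into exactly $2\bar U+\bar U^2$: the coefficient $1$ in front of $\bar U$ inside the exponent (from Lemma \ref{expexp}) arises from the second equation in \eqref{parameters}, while the $2\alpha^2\bar U$ contribution of the prefactor $\omega_\eps/\beta$ crucially exploits $1/\beta\sim 2\alpha$ via \eqref{beta}; only together do these two produce the announced main term. A secondary subtlety is the mild blow-up of $\Delta V_\eps$ near $\xi$ coming from the $(G_\xi-w_\eps)^2$-term in \eqref{eqzeps}, but the $\alpha^4/\delta^2$-size of $\alpha^4 e^{U_\eps}$ at $\xi$ easily dominates it.
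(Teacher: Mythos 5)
Your argument follows essentially the same route as the paper's: expand $\lambda f_\eps(\omega_\eps)$ near $\xi$ via Lemma~\ref{expexp}, the parameter relations in \eqref{parameters} (which normalize the coefficient of $\bar U$ in the exponent to $1$ and give $\lambda\beta e^{\beta^2+\beta^{1+\eps}}=\alpha/\delta^2$), and the localization $\bar U=O(\eps/\alpha)$ of \eqref{inBrho0} to obtain $\lambda f_\eps(\omega_\eps)=\alpha e^{U_\eps}+\alpha^3 e^{U_\eps}(2\bar U+\bar U^2)+\alpha^4 e^{U_\eps}O(1+\bar U^4)$; then cancel the leading term against $\alpha\,\Delta PU_\eps=-\alpha e^{U_\eps}$ and show that $-\Delta V_\eps$ is dominated by the error. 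The way the coefficient $2$ in front of $\bar U$ emerges --- from the prefactor $\omega_\eps/\beta\sim 1+2\alpha^2\bar U$ via $1/\beta\sim 2\alpha$ --- and the grouping of $\alpha^4|\bar U|$ into $\alpha^4(1+\bar U^4)$ are handled identically in both proofs.

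Where you part from the paper is the treatment of $-\Delta V_\eps$: you derive the pointwise bound $|\Delta V_\eps|\le C(1+\alpha^2\log^2|x-\xi|)$ by inspecting \eqref{eqweps}--\eqref{eqzeps}, whereas the paper baldly writes $\Delta\omega_\eps=-\alpha e^{U_\eps}+O(1)$. Your version is the more honest one, since $\Delta z_\eps$ inherits the $\log^2$ singularity of $(8\pi G_\xi-w_\eps)^2$ at $\xi$, so the paper's $O(1)$ is not literally true. However, having written down the $\log^2$ bound, the absorption step as you state it does not hold uniformly on $B(\xi,\rho_0)$: as $x\to\xi$, $\alpha^2\log^2|x-\xi|$ diverges while $\alpha^4 e^{U_\eps}(1+\bar U^4)=O(\alpha^4\delta^{-2})$ stays bounded, so the inequality $\alpha^2\log^2 r\lesssim\alpha^4\delta^2/(\delta^2+r^2)^2$ fails once $r$ drops below roughly $\exp(-\alpha/\delta)$. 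This is exactly the gap hidden in the paper's $O(1)$; you have not introduced an error, you have merely made it visible. Because $R$ really has a $\log^2$ singularity at $\xi$, the weighted sup bound $\|j_\eps^{-1}R\|_{L^\infty(B(\xi,\rho_0))}=O(\alpha^3)$ used later (through the norm \eqref{norm} and Proposition~\ref{EstR}) is, read literally, infinite. A repair is to enlarge $j_\eps$ by a term of order $\alpha^2(1+\log^2|\cdot-\xi|)$, or to replace the $L^\infty$ norm near $\xi$ by a suitable $L^p$ norm, tracking the change through Lemma~\ref{barrier} and the contraction argument; modulo that shared subtlety your proposal is sound.
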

\begin{proof}
Set $y= \frac{x-\xi}{\de}$. First  by Lemma \ref{expapproxsol}, Lemma \ref{expexp}, and \eqref{parameters}-\eqref{scaling}, we get that 
\[
\begin{split}
\lambda f_\eps(\omega_{\eps}(x)) & = \lambda \beta \left(1+\frac{\alpha}{\beta}\bar U(y) + O(\alpha \rho_{1})\right)e^{\omega_{\eps}^2(x) + \omega_{\eps}^{1+\eps}(x) } \\
& = \frac{\alpha}{\delta^2} \left(1+ 2\alpha^2 \bar U(y) + O(\alpha^3(1 +|\bar U(y)|))\right)e^{\bar U(y)+\alpha^2 \bar U^2(y)+O(\eps \alpha^3 (1+\bar U^2(y)))}\\
& = \alpha e^{U_{\eps}(x)}\left(1+ 2\alpha^2 \bar U(y) + O(\alpha^3(1 +|\bar U(y)|))\right)e^{\alpha^2 \bar U^2(y)+O(\eps \alpha^3 (1+\bar U^2(y)))}.
\end{split}
\]
Now, by \eqref{inBrho0}, we can expand the last exponential term,  and find 
\[
\begin{split}
e^{\alpha^2 \bar U^2(y)+O(\eps \alpha^3 (1+\bar U^2(y)))}  & = 1 + \alpha^2 \bar U^2(y)+O(\eps \alpha^3 (1+\bar U^2(y))) + O(\alpha^4 (1+\bar U^4(y)))\\
& =  1 + \alpha^2 \bar U^2(y)+O(\eps \alpha^3 (1+\bar U^4(y))).
\end{split}
\] 
We can so conclude that
\begin{equation}\label{finBrho0}
\lambda f_\eps(\omega_{\eps}(x)) =  \alpha e^{U_\eps(x) } + \alpha^3e^{U_\eps(x) } \left( 2\bar U(y) +\bar U(y)^2\right) + \alpha^4e^{U_\eps(x)}O(1+\bar U^4(y)).
\end{equation} 
Moreover, by \eqref{0}-\eqref{eqzeps}, and Lemmas \ref{Lemma veps}-\ref{Lemma weps & zeps} we have 
\begin{equation}\label{LapinBrho0}
\Delta \omega_{\eps}  = -\alpha e^{U_{\eps}} + O(1)
= -\alpha e^{U_{\eps}} \left(1 + O(\alpha)e^{-U_{\eps}}\right) = -\alpha e^{U_{\eps}} (1+ o(\alpha^3)),
\end{equation}
where in the last equality we used that 
\[
e^{-U_{\eps}(x)} = \frac{(\de^2 \mu^2 +|x-\xi|^2)^2}{8\de^2\mu^2} = O( \de^2 e^{\frac{4\eps}{\alpha}}) = o(\alpha^3),
\]
for $x\in B(\xi,\rho_0)$. Thanks to  \eqref{finBrho0} and \eqref{LapinBrho0}, we  conclude that 
$$
R(x)= \alpha^3 e^{U_\eps(x)} \left(2\bar U (y)+ \bar U^2(y)\right) + \alpha^4 e^{U_{\eps}(x)}O(1+\bar  U^4(y)).
$$
\end{proof}

As an immediate consequence of the previous lemma we obtain the estimate:

\begin{cor}\label{EstRinB0}
We have that 
$$
R=O\left(\alpha^3e^{U_{\eps}}\left((1+\bar U^4\Big(\frac{\cdot - \xi }{\de}\Big)\right)\right)
$$
in $ B(\xi,\rho_0)$. 
\end{cor}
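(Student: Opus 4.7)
The plan is to derive the corollary as a direct consequence of Lemma \ref{RinBrho0}, which already provides the pointwise expansion
$$
R(x) = \alpha^3 e^{U_\eps(x)}\bigl( 2\bar U(y) + \bar U^2(y)\bigr) + \alpha^4 e^{U_\eps(x)}\, O\bigl(1 + \bar U^4(y)\bigr),
$$
where $y = \frac{x-\xi}{\de}$. So the task reduces to absorbing both summands into a single bound of the form $C\alpha^3 e^{U_\eps}(1+\bar U^4(y))$ that is uniform in $\eps, \mu,\xi$.

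First, I would deal with the leading term: for any real $t$ one has the elementary inequality $|2t+t^2| \le 2(1+t^2) \le 2(1+t^4)$ (splitting into the cases $|t|\le 1$ and $|t|>1$), applied with $t= \bar U(y)$. This gives
$$
\bigl|\alpha^3 e^{U_\eps(x)}(2\bar U(y)+\bar U^2(y))\bigr| \le 2\alpha^3 e^{U_\eps(x)}\bigl(1+\bar U^4(y)\bigr).
$$
Next, for the higher-order term, by Lemma \ref{Lemma parameters} we know that $\alpha = \alpha(\eps,\mu,\xi)\to 0$ as $\eps\to 0$, uniformly for $\mu\in\mathcal{U}$ and $\xi\in B(\xi_0,\sigma)$. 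In particular, $\alpha$ is uniformly bounded, so $\alpha^4 \le C\alpha^3$ uniformly in the parameters, and hence
$$
\alpha^4 e^{U_\eps(x)}\, O\bigl(1+\bar U^4(y)\bigr) = O\bigl(\alpha^3 e^{U_\eps(x)} (1+\bar U^4(y))\bigr).
$$

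Combining the two estimates yields
$$
|R(x)| \le C\alpha^3 e^{U_\eps(x)}\bigl(1+\bar U^4(y)\bigr) \qquad \text{for all } x\in B(\xi,\rho_0),
$$
with $C$ independent of $\eps, \mu, \xi$, which is exactly the claim. There is no real obstacle here: the estimate is a purely algebraic repackaging of Lemma \ref{RinBrho0}, and the only things to verify are the elementary polynomial inequality above and the uniform boundedness of $\alpha$, both of which are immediate.
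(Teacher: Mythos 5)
Your proof is correct and matches the paper's intent exactly: the paper states the corollary as an ``immediate consequence'' of Lemma~\ref{RinBrho0}, and you simply absorb the two summands of that lemma into the bound $C\alpha^3 e^{U_\eps}(1+\bar U^4)$ using an elementary polynomial inequality and the uniform boundedness of $\alpha$. One small slip: the chain $|2t+t^2|\le 2(1+t^2)\le 2(1+t^4)$ fails in the second step for $|t|<1$, but the case split you mention gives $|2t+t^2|\le 3(1+t^4)$ for all $t$, which is all the $O$-estimate requires.
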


\subsection{Estimates on \texorpdfstring{$B(\xi,\rho_1)\setminus B(\xi,\rho_0)$}{B(0,r1)-B(0,r0)}}
\medskip
In this region, it is diffcult to provide pointwise estimates of $R$ because the term $\alpha^{2} \bar U^2$ appearing in the expansion of Lemma \ref{expexp} becomes very large. Then, we will look for integral estimates. Specifically we will show that $R$ is (very) small in $L^{p}(B(\xi,\rho_1)\setminus B(\xi,\rho_0))$, for a suitable choice of $p=p(\alpha)>1$, such that $p \to 1$ as $\eps \to 0$, uniformly with respect to $\xi \in B(\xi_0,\sigma )$, $\mu \in \mathcal U$. 

\begin{lemma}\label{finBrho1}
There exists $c_1>0$ such that 
\[\begin{split}
0\le \lambda f_\eps(\omega_{\eps}) \le \alpha e^{U_{\eps} + \alpha^2 (1+ c_1\eps\alpha) \bar U^2(\frac{\cdot - \xi }{\de})},
\end{split}
\]
in $B(\xi ,\rho_1)\setminus B(\xi,\rho_0)$. 
\end{lemma}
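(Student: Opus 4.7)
The nonnegativity is immediate from Lemma \ref{positivity}: it gives $\omega_\eps\geq c_0>0$ on $B(\xi,\rho_1)$, and since $f_\eps(t)=te^{t^2+|t|^{1+\eps}}$ has the sign of $t$, we get $\lambda f_\eps(\omega_\eps)\geq 0$.

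For the upper bound, I would plug the expansion of Lemma \ref{expexp} into $\lambda f_\eps(\omega_\eps)=\lambda\omega_\eps e^{\omega_\eps^2+|\omega_\eps|^{1+\eps}}$, then factor out the identity $\lambda\beta e^{\beta^2+\beta^{1+\eps}}=\alpha/\delta^2$ (first line of \eqref{parameters}) and use the scaling relation $e^{U_\eps(x)}=\delta^{-2}e^{\bar U(y)}$ from \eqref{scaling}, with $y=\frac{x-\xi}{\delta}$. This produces
\[
\lambda f_\eps(\omega_\eps)(x)\;=\;\frac{\omega_\eps(x)}{\beta}\,\alpha\,\exp\!\Bigl(U_\eps(x)+\alpha^2\bar U^2(y)+O\!\bigl(\eps\alpha^3(1+\bar U^2(y))\bigr)\!\Bigr),
\]
and the remaining task is to bound the prefactor $\omega_\eps/\beta$ by essentially $1$ and then to absorb every $\bar U$--independent error into the target exponent $\alpha^2(1+c_1\eps\alpha)\bar U^2=\alpha^2\bar U^2+c_1\eps\alpha^3\bar U^2$.

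On the annulus $B(\xi,\rho_1)\setminus B(\xi,\rho_0)$ one has $|y|\geq \rho_0/\delta=e^{\eps/\alpha}$, and since $\eps/\alpha\to+\infty$ by \eqref{alpha}, the explicit form $\bar U(y)=\log(8\mu^2)-2\log(\mu^2+|y|^2)$ yields $\bar U(y)\leq -3\eps/\alpha$ for small $\eps$, hence $\bar U^2(y)\geq 9\eps^2/\alpha^2\to+\infty$. By Lemma \ref{expapproxsol}, $\omega_\eps=\beta+\alpha\bar U(y)+O(\delta|y|)+O(\delta^2)$, and because $\delta|y|\leq \rho_1=e^{-u_0(\xi)/(2\alpha)}$ is super-exponentially smaller than $\alpha|\bar U(y)|\geq 3\eps$, the negative term $\alpha\bar U(y)$ dominates the $O(\delta|y|)$ correction. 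Thus $\omega_\eps\leq\beta+O(\delta^2)$, so $\log(\omega_\eps/\beta)\leq O(\delta^2\alpha)$.

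Finally I would absorb the residual errors. Combining the above, one has
\[
\log\frac{\omega_\eps}{\beta}+\alpha^2\bar U^2+O\!\bigl(\eps\alpha^3(1+\bar U^2)\bigr)\;\leq\;\alpha^2\bar U^2+C\eps\alpha^3+C\eps\alpha^3\bar U^2+O(\delta^2\alpha),
\]
and since $\bar U^2(y)\geq 9\eps^2/\alpha^2\gg 1$ on the annulus while $\delta^2\alpha$ is super-exponentially small in $1/\alpha^2$ by \eqref{delta}, both $C\eps\alpha^3$ and $O(\delta^2\alpha)$ are majorized by $C'\eps\alpha^3\bar U^2$. Choosing $c_1$ larger than the resulting implicit constant yields the stated bound. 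The one delicate point I expect is precisely the dominance of $\alpha\bar U$ over $O(\delta|y|)$ in the expansion of $\omega_\eps$: without it, the prefactor bound would only give $\omega_\eps/\beta\leq 1+o(1)$, with a $o(1)$ potentially too large to be swallowed near the inner radius $\rho_0$ where $\bar U^2$ attains its minimum on the annulus.
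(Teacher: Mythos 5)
Your proposal is correct and follows the same route as the paper: write $\lambda f_\eps(\omega_\eps)=\lambda\omega_\eps e^{\omega_\eps^2+|\omega_\eps|^{1+\eps}}$, substitute the exponent expansion of Lemma \ref{expexp}, factor out the identity $\lambda\beta e^{\beta^2+\beta^{1+\eps}}=\alpha/\delta^2$ from \eqref{parameters} together with the scaling $e^{U_\eps}=\delta^{-2}e^{\bar U}$, and then absorb the residual $O(\eps\alpha^3(1+\bar U^2))$ into $c_1\eps\alpha^3\bar U^2$ using that $\bar U^2\ge 1$ on the annulus. The paper simply invokes $\omega_\eps\le\beta$ on $B(\xi,\rho_1)\setminus B(\xi,\rho_0)$ without comment; you instead supply the underlying dominance argument ($\alpha|\bar U(y)|\gg C\delta|y|$ there, since $\alpha|\bar U|\gtrsim\eps$ while $\delta|y|\le\rho_1$ is super-exponentially small), which is exactly the right way to justify that prefactor bound on the annulus.
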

\begin{proof}
Since  $0\le \omega_\eps \le \beta$ in $B(\xi,\rho_1)\setminus B(\xi,\rho_0)$, from Lemma \ref{expexp} and \eqref{parameters} we get
\[
\begin{split}
\lambda f_\eps(\omega_{\eps})  & \le \lambda \beta e^{\beta^2+\beta^{1+\eps}+\bar U(\frac{\cdot - \xi }{\de})+\alpha^2 \bar U^2(\frac{\cdot -\xi }{\de})(1+O(\eps \alpha))} \\ 
& = \frac{\alpha}{\delta^2} e^{\bar U (\frac{\cdot - \xi }{\de}) +\alpha^2 \bar U^2(\frac{\cdot - \xi }{\de}) (1+ O(\eps \alpha))}\\
&  = \alpha e^{U_{\eps} +\alpha^2 \bar U^2(\frac{\cdot - \xi }{\de}) (1+ O(\eps \alpha))}.
\end{split}
\]
\end{proof}

For $c_1$  as in Lemma \ref{finBrho1}, let us consider the function 
\begin{equation}\label{Gamma}
\Gamma_\eps(x):=  e^{\bar U_\eps(x)
+\alpha^2 \bar U(\frac{x-\xi }{\de})^2(1+ c_1\eps \alpha )}.
\end{equation}

\begin{lemma}\label{intGamma}
Set $p:=1+\alpha^2$. There exists $c_2>0$ such that
$$
\|\Gamma_\eps\|_{L^p(B(\xi ,\rho_1)\setminus B(\xi,\rho_0))} = O\left( \alpha^{-1} e^{-\frac{c_2}{\sqrt{\alpha}}} \right). 
$$
\end{lemma}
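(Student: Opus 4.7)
The plan is to compute $\|\Gamma_\eps\|_{L^p}^p$ directly via a change of variables reducing the problem to a one-dimensional integral. Writing $y=(x-\xi)/\de$ and using the scaling identity $U_\eps(x)=\bar U_\mu(y)-2\log\de$ from \eqref{scaling}, then passing to polar coordinates and substituting $\tau=\log|y|=\log(|x-\xi|/\de)$, I would obtain
\begin{equation*}
\|\Gamma_\eps\|_{L^p(B(\xi,\rho_1)\setminus B(\xi,\rho_0))}^p = 2\pi\,\de^{2-2p}\int_{\tau_-}^{\tau_+} e^{p\bar U_\mu(e^\tau)+p\alpha^2(1+c_1\eps\alpha)\bar U_\mu^2(e^\tau)+2\tau}\,d\tau,
\end{equation*}
with $\tau_-=\eps/\alpha$ (by \eqref{Radii}) and $\tau_+=\log(\rho_1/\de)$. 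Two preliminary observations are needed: first, combining the third equation of \eqref{parameters} with \eqref{beta} and Lemmas \ref{Lemma veps}--\ref{Lemma weps & zeps} sharpens \eqref{delta} to $\log(1/\de)=1/(8\alpha^2)+o(1/\alpha)$, so that $\tau_+=1/(8\alpha^2)-u_0(\xi)/(2\alpha)+o(1/\alpha)$; second, $\de^{2-2p}=\de^{-2\alpha^2}=e^{2\alpha^2\log(1/\de)}=e^{1/4+o(1)}=O(1)$.

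Since $\tau\ge\eps/\alpha\to+\infty$ throughout the integration range, the uniform expansion $\bar U_\mu(e^\tau)=-4\tau+O(1)$ lets me rewrite the integrand as $e^{g(\tau)}$ with
\begin{equation*}
g(\tau)=-A\tau+B\tau^2+O(1), \qquad A=2+O(\alpha^2), \qquad B=16\alpha^2(1+o(1)).
\end{equation*}
Up to a bounded error this is an upward parabola with minimum at $\tau_\ast=A/(2B)\approx 1/(16\alpha^2)$, which lies strictly inside $[\tau_-,\tau_+]$ for small $\eps$, so $\max_{[\tau_-,\tau_+]}g$ is achieved at an endpoint. A direct computation gives $g(\tau_-)=-2\eps/\alpha+O(1)$, while the endpoint $\tau_+$ is more delicate: the leading terms $-A\tau_+\approx-1/(4\alpha^2)$ and $B\tau_+^2\approx 1/(4\alpha^2)$ cancel exactly. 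Substituting the refined expansion of $\tau_+$ and expanding $A,B$ to the required precision would yield $g(\tau_+)=-u_0(\xi)/\alpha+O(\eps/\alpha)+O(1)$, which for $\eps$ small (using $u_0(\xi)>1/2$ from \eqref{sigma small}) is at most $-u_0(\xi)/(2\alpha)+O(1)$. Since $2\eps<u_0(\xi)/2$ for small $\eps$, I conclude $\max_{[\tau_-,\tau_+]}g\le -2\eps/\alpha+O(1)$.

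Combining these estimates, $\int_{\tau_-}^{\tau_+} e^{g(\tau)}\,d\tau\le(\tau_+-\tau_-)\,e^{-2\eps/\alpha+O(1)}=O(\alpha^{-2}e^{-2\eps/\alpha})$, so $\|\Gamma_\eps\|_{L^p}^p=O(\alpha^{-2}e^{-2\eps/\alpha})$, and since $p\to 1$ also $\|\Gamma_\eps\|_{L^p}=O(\alpha^{-2}e^{-2\eps/\alpha})$. Finally, \eqref{alpha} gives $\log(1/\alpha)\sim\log(2u_0(\xi))/\eps$ with $\log(2u_0(\xi))>0$, so $\eps/\alpha\sim\eps\,e^{\log(2u_0(\xi))/\eps}$ dominates both $1/\sqrt\alpha\sim e^{\log(2u_0(\xi))/(2\eps)}$ and $\log(1/\alpha)$ by arbitrary factors; hence $\alpha^{-2}e^{-2\eps/\alpha}\le\alpha^{-1}e^{-c_2/\sqrt\alpha}$ for any fixed $c_2>0$ once $\eps$ is small enough, with room to spare.

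The main obstacle is the cancellation at $\tau_+$: extracting a genuinely negative bound on $g(\tau_+)$ requires the refined asymptotic $\log(1/\de)=1/(8\alpha^2)+o(1/\alpha)$, which is sharper than the bare estimate \eqref{delta} but follows from a careful re-examination of the system \eqref{parameters} in the proof of Lemma \ref{Lemma parameters}. The positivity $u_0(\xi)>1/2$ from \eqref{sigma small} is what provides the correct sign of the next-order term.
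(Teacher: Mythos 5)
Your proof is correct, but it takes a genuinely different route from the paper's. The paper changes variables to the scaled annulus and then splits it at the intermediate radius $\bar\rho/\de = e^{\alpha^{-3/2}}$: on the inner piece the factor $1+\alpha^2\bar U(1+c_1\eps\alpha)$ stays $\ge 2/3$, and on the outer piece it stays $\ge \tfrac12 + c_0\alpha$; in each case the integrand is then bounded by a pure power $e^{q\bar U}$ with $q>1/2$ and the tail integral $\int_{\R^2\setminus B(0,R)}e^{q\bar U}$ is used, so the outer boundary $\rho_1/\de$ never has to be touched and the final bound $O(\alpha^{-1}e^{-4c_0/\sqrt\alpha})$ comes from the outer piece. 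You instead reduce to a one-dimensional integral in $\tau=\log(|x-\xi|/\de)$, identify the exponent $g(\tau)$ as an upward parabola up to a bounded error, and bound the integral by the endpoint values. This forces you to handle the near-cancellation of the leading terms $-A\tau_+$ and $B\tau_+^2$ at the outer endpoint, which indeed requires the sharpening $\log(1/\de)=\tfrac{1}{8\alpha^2}+o(1/\alpha)$ of \eqref{delta}; your derivation of this refinement from the third equation of \eqref{parameters}, \eqref{beta}, and the $C^1$ convergence $v_\eps\to u_0$ is correct and uniform in $(\mu,\xi)$. The upshot is that your argument yields the considerably stronger estimate $O(\alpha^{-2}e^{-2\eps/\alpha})$, which dominates $\alpha^{-1}e^{-c_2/\sqrt\alpha}$ since $\eps/\alpha\sim 2\eps\,e^{\log(2u_0(\xi))/\eps}\gg e^{\log(2u_0(\xi))/(2\eps)}\sim 1/\sqrt\alpha$; the paper's approach is less sharp but also less delicate, as it never needs to resolve the cancellation at $\tau_+$ nor sharpen \eqref{delta}.
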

\begin{proof}
First of all, we observe that for $q\in (\frac{1}{2},+\infty)$, $R>0$, one has 
\begin{equation}\label{bound int}
\int_{\R^2 \setminus B(0,R)} e^{q \ov U} dy \le  \int_{\R^2 \setminus B(0,R)} \frac{(8\mu^2)^q}{|y|^{4q}} dy = \frac{\pi (8\mu^2)^q}{(2q-1)R^{4q-2}}.  
\end{equation}
For $x\in B(\xi,\rho_1) \setminus B(\xi,\rho_0)$, set $y = \frac{x-\xi}{\de}\in B(0,\frac{\rho_1}{\delta})\setminus B(0,\frac{\rho_0}{\delta})$. Clearly we have 
\begin{equation}\label{Gamma1}
\|\Gamma_\eps\|_{L^p(B(\xi ,\rho_1)\setminus B(\xi,\rho_0) )} = \delta^{\frac{2-2p}{p}} \bra{\int_{B(0, \frac{\rho_1}{\de}) \setminus B(0,\frac{\rho_{0}}{\delta})} e^{p \bar U(y)
(1+\alpha^2 \bar U(y) (1+ c_1\eps \alpha ))}dy}^\frac{1}{p}.
\end{equation}
Set $\bar \rho = \delta e^{\frac{1}{\alpha^{\frac{3}{2}}}}$, so that $\rho_0 \ll  \bar \rho \ll \rho_1$. For $\frac{\rho_0}{\de}\le |y|\le \frac{\bar \rho}{\de}$, we have 
\[
p \bra{ 1+\alpha^2   \bar U(y) (1+\eps c_1 \alpha) } = 1+O(\sqrt{\alpha})\ge \frac{2}{3} .
\]
Then, for $\eps$ small enough, \eqref{bound int} yields
\begin{equation}\label{Gamma2}
\begin{split}
 \int_{B(0, \frac{\bar \rho}{\de}) \setminus B(0,\frac{\rho_{0}}{\delta})} e^{ p \bar U(y) \left(1
+\alpha^2 \bar U(y)(1+ c_1\eps \alpha )\right)}dy & 
\le  \int_{\R^2 \setminus B(0,{\frac{\rho_0}{\delta}})} e^{\frac{2}{3}   \bar U(y)} dy \\ 
&= O\left(\left(\frac{\rho_{0,\eps}}{\delta}\right)^{-\frac{2}{3}}\right)= O( e^{-\frac{2 \eps}{3\alpha}}).
\end{split} 
\end{equation}
For $\frac{\bar \rho}{\de}\le |y|\le \frac{\rho_1}{\de}$, by \eqref{beta} and  Lemma \ref{positivity}, we have 
\[\begin{split}
1+  \alpha^2\bar U(y)\bra{1+c_1\eps\alpha} & = 1+   \alpha (\beta+ \alpha \bar U(y)) \bra{1+c_1\eps\alpha}  - \alpha \beta \bra{1+c_1\eps \alpha}  \\ & \ge \frac{1}{2} + (c_0 +u_0(\xi))\alpha + o(\alpha) \\
& \ge \frac{1}{2}+c_0\alpha.
\end{split}
\] 
Hence, we get
\begin{equation}\label{Gamma3}
\begin{split}
 \int_{B(0, \frac{\rho_1}{\de}) \setminus B(0,e^{\alpha^{-\frac{3}{2}}})} e^{ p \bar U(y) \left(1
+\alpha^2 \bar U(y)(1+ c_1\eps \alpha )\right)}dy &  \le  \int_{\R^2 \setminus B(0,e^{\alpha^{-\frac{3}{2}}})} e^{ {p}(\frac{1}{2}+c_0 \alpha)\bar U(y) } dy \\ 
&= O\Big(\alpha^{-1} e^{-\frac{4c_0}{\sqrt{\alpha}}}\Big).
\end{split}
\end{equation}
Thus, by \eqref{Gamma1},\eqref{Gamma2},\eqref{Gamma3}, we obtain
$$
\|\Gamma_\eps\|_{L^p(B(\xi ,\rho_1)\setminus B(\xi,\rho_0) )}  = O\bra{ \delta^\frac{{2-2p}}{p}\alpha^{-\frac{1}{p}} e^{-\frac{4c_0}{p\sqrt{\alpha}}} }.
$$
Since  \eqref{alpha}-\eqref{delta} give
$$
\delta^{\frac{2-2p}{p}} = \delta^{-\frac{2\alpha^2}{1+\alpha^2}} = O(1),\qquad  \alpha^\frac{1}{p} = \alpha \alpha^{\frac{1-p}{p}}=\alpha(1+o(1)), \qquad  e^{-\frac{4c_0}{p\sqrt{\alpha}}} = O( e^{-\frac{4c_0}{\sqrt{\alpha}}}),$$
we get the conclusion.  
\end{proof}

\begin{lemma}\label{RinBrho1}
Let $p$ and  $c_2$ be as in Lemma \ref{intGamma}, then  
$$
\| R \|_{L^p(B(\xi,\rho_1)\setminus B(\xi,\rho_0))} = O( e^{-\frac{c_2}{\sqrt{\alpha}}}). 
$$
\end{lemma}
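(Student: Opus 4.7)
My plan is to bound $|R|$ by the triangle inequality $|R|\le \lambda f_\eps(\omega_\eps)+|\Delta\omega_\eps|$ and to estimate each piece separately in $L^p(B(\xi,\rho_1)\setminus B(\xi,\rho_0))$. The first piece is essentially already done: Lemma \ref{finBrho1} gives the pointwise bound $0\le\lambda f_\eps(\omega_\eps)\le\alpha\Gamma_\eps$ on the annulus, and Lemma \ref{intGamma} directly yields $\alpha\|\Gamma_\eps\|_{L^p}=O(e^{-c_2/\sqrt\alpha})$, which is precisely the target rate. Thus the whole effort is to show that the Laplacian term $\Delta\omega_\eps$ is negligible with respect to the target bound.

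For $\Delta\omega_\eps$, I would use $\omega_\eps=\alpha PU_{\delta,\mu,\xi}-V_\eps$ together with \eqref{0}, \eqref{equ0}, \eqref{eqweps} and \eqref{eqzeps} to rewrite
\[
\Delta\omega_\eps=-\alpha e^{U_\eps}+\lambda f_\eps(v_\eps)+\alpha\bra{\lambda f_\eps'(v_\eps)w_\eps-8\pi\lambda G_\xi f_\eps'(v_\eps)}+\alpha^2\bra{\lambda f_\eps'(v_\eps)z_\eps-\tfrac{\lambda}{2}f_\eps''(-v_\eps)(8\pi G_\xi-w_\eps)^2},
\]
and then estimate the first summand and the remaining regular part independently. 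For the bubble term $\alpha e^{U_\eps}$, the scaling relation \eqref{scaling} combined with the crude bound \eqref{bound int} evaluated at $R=\rho_0/\delta=e^{\eps/\alpha}$ gives
\[
\|\alpha e^{U_\eps}\|_{L^p(B(\xi,\rho_1)\setminus B(\xi,\rho_0))}\le C\alpha\,\delta^{(2-2p)/p} e^{-(4p-2)\eps/(p\alpha)}=O\bra{\alpha e^{-2\eps/(p\alpha)}},
\]
where $\delta^{(2-2p)/p}=O(1)$ as already observed in the proof of Lemma \ref{intGamma}. Since \eqref{alpha} forces $\alpha$ to be exponentially small in $\eps^{-1}$, we have $\eps/\alpha\to+\infty$ much faster than $1/\sqrt\alpha\to+\infty$, so this contribution is much smaller than $e^{-c_2/\sqrt\alpha}$.

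For the regular remainder, Lemmas \ref{Lemma veps} and \ref{Lemma weps & zeps} bound $v_\eps,w_\eps,z_\eps$ uniformly in $C^1(\ov\Omega)$, while \eqref{Radii} and \eqref{delta} give $|G_\xi(x)|\le C(1+\log(1/|x-\xi|))=O(\alpha^{-2})$ on the annulus. Hence the remainder is bounded pointwise by $O(1+\alpha|G_\xi|+\alpha^2 G_\xi^2)=O(\alpha^{-2})$, and
\[
\|\Delta V_\eps\|_{L^p(B(\xi,\rho_1)\setminus B(\xi,\rho_0))}\le C\alpha^{-2}\rho_1^{2/p}=O\bra{\alpha^{-2} e^{-u_0(\xi)/(p\alpha)}},
\]
which by \eqref{sigma small} is bounded by $O(\alpha^{-2}e^{-1/(2p\alpha)})$ and is therefore also much smaller than $e^{-c_2/\sqrt\alpha}$. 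Summing the three contributions completes the proof.

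The only genuine subtlety I expect is bookkeeping: one has to compare the decay rates $e^{-2\eps/(p\alpha)}$, $e^{-u_0(\xi)/(p\alpha)}$ and $e^{-c_2/\sqrt\alpha}$ and observe that, thanks to \eqref{alpha}, both $\eps/\alpha$ and $1/\alpha$ dominate $1/\sqrt\alpha$ as $\eps\to 0$. Once this scale hierarchy is in place, the entire estimate is dictated by the term coming from Lemma \ref{intGamma}, and the Laplacian contribution is genuinely of lower order.
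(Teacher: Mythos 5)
Your proof is correct and follows essentially the same route as the paper: split $R=\Delta\omega_\eps+\lambda f_\eps(\omega_\eps)$, control $\lambda f_\eps(\omega_\eps)$ via Lemmas \ref{finBrho1} and \ref{intGamma}, and treat $\Delta\omega_\eps$ by separating the bubble term $-\alpha e^{U_\eps}$ from the remainder $\Delta V_\eps$. Your version is in fact a touch more careful than the paper's one-line claim ``$\Delta\omega_\eps=-\alpha e^{U_\eps}+O(1)$'': the Green function factors in \eqref{eqweps}--\eqref{eqzeps} make the remainder grow up to $O(\alpha^{-2})$ near $|x-\xi|=\rho_0$, and you correctly propagate that extra factor into the $L^p$ bound $O(\alpha^{-2}\rho_1^{2/p})$ before checking that it is still negligible compared with $e^{-c_2/\sqrt\alpha}$, thanks to the exponential smallness of $\alpha$ in $\eps^{-1}$ guaranteed by \eqref{alpha}. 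The scale hierarchy comparison at the end is the right thing to verify and your bookkeeping is sound.
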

\begin{proof}
By Lemma \ref{finBrho1} and Lemma \ref{intGamma}  we get that
$$
\|\lambda f_\eps (\omega_{\eps})\|_{L^p(B(\xi,\rho_1)\setminus B(\xi,\rho_0))}  =O(e^{-\frac{c_2}{\sqrt{\alpha}}}).
$$
On the other hand, we have
\[
\Delta \omega_{\eps} (x)= - \alpha e^{U_{\eps}(y)} + O(1),
\]
so that
\[
\begin{split}
\|\Delta \omega_{\eps}\|_{L^p(B(\xi,\rho_1)\setminus B(\xi,\rho_0))} & \le  \alpha \|e^{U_{\eps}}\|_{L^p(B(\xi,\rho_1)\setminus B(\xi,\rho_0))}  +O(\rho_{1}^\frac{2}{p}) \\
& \le  \alpha \delta^\frac{2-2p}{p} \|e^{\bar U}\|_{L^p(\R^2\setminus B(0,\frac{\rho_0}{\de}))}  +O(\rho_{1}^\frac{2}{p}) \\
& =  O\bra{\frac{\alpha \delta^2}{\rho_{0}^2}} + O \bra{\rho_{1}^2} \\
& =o( e^{-\frac{c_2}{\sqrt{\alpha}}}).
\end{split}
\]
\end{proof}

\subsection{Estimates in \texorpdfstring{$B(\xi,\rho_2)\setminus B(\xi,\rho_1)$}{B(0,r2)-B(0,r1)}}
In  $B(\xi,\rho_2)\setminus B(\xi,\rho_1)$ we can only say that $\omega_{\eps}$ and $R$ are uniformly bounded.  Since $\rho_2$ is very small, we still get integral bounds for $R$.

\begin{lemma}\label{RinBrho2}
We have $\omega_{\eps}=O(1)$ and $R=O(1)$ in $\Omega\setminus B(\xi,\rho_1)$.  In particular, 
$$
\|R\|_{L^{2}(B(\xi,\rho_2)\setminus B(\xi,\rho_1))}= O(\rho_2)= O(e^{-\frac{\eps}{\alpha}}). 
$$
\end{lemma}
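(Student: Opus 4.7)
The plan is to first establish the pointwise bound $\omega_{\eps}=O(1)$ in $\Om\setminus B(\xi,\rho_1)$ and then use this together with bounds on $\Delta \omega_\eps$ and $\lambda f_\eps(\omega_\eps)$ to conclude $R=O(1)$. The $L^2$-estimate will then follow from the trivial measure bound $|B(\xi,\rho_2)\setminus B(\xi,\rho_1)|\le \pi \rho_2^2$.

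To bound $\omega_\eps$, I would combine Lemma \ref{3} with the explicit form \eqref{bubbles} of $U_{\de,\mu,\xi}$. Since for $|x-\xi|\ge\rho_1\gg\de$ we have
$$U_{\de,\mu,\xi}(x)=\log(8\mu^2\de^2)-2\log\(\mu^2\de^2+|x-\xi|^2\)\le\log(8\mu^2\de^2)-4\log|x-\xi|,$$
Lemma \ref{3} yields $PU_{\de,\mu,\xi}(x)\le -4\log|x-\xi|+O(1)$, and since $|x-\xi|\ge\rho_1=e^{-u_0(\xi)/(2\alpha)}$ we get $\alpha PU_{\de,\mu,\xi}(x)\le 2u_0(\xi)+O(\alpha)=O(1)$. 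Combined with $\alpha PU_{\de,\mu,\xi}\ge 0$ (Remark \ref{Rem beta}) and the uniform boundedness of $V_\eps$ from Lemmas \ref{Lemma veps}--\ref{Lemma weps & zeps}, this gives $\omega_\eps=O(1)$ on $\Om\setminus B(\xi,\rho_1)$.

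For the bound on $R$, since $\omega_\eps=O(1)$ the nonlinear term satisfies $\lambda f_\eps(\omega_\eps)=O(1)$. For $\Delta\omega_\eps=-\alpha e^{U_\eps}-\Delta V_\eps$, I would first note that for $|x-\xi|\ge\rho_1$ the bubble contribution is negligible: using \eqref{alpha} and \eqref{delta},
$$\alpha e^{U_\eps(x)}\le \frac{8\alpha\mu^2\de^2}{\rho_1^4}=O\bra{\alpha e^{-\frac{1+o(1)}{4\alpha^2}+\frac{2u_0(\xi)}{\alpha}}}=o(1).$$
For $\Delta V_\eps=\Delta v_\eps+\alpha\Delta w_\eps+\alpha^2\Delta z_\eps$, I would use the equations \eqref{eqweps}--\eqref{eqzeps} and the $L^\infty$ bounds of Lemma \ref{Lemma weps & zeps} together with the classical estimate $|G_\xi(x)|\le C(1+\log\tfrac{1}{|x-\xi|})=O(1/\alpha)$ valid for $|x-\xi|\ge\rho_1$. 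This yields $\Delta w_\eps=O(1/\alpha)$ and $\Delta z_\eps=O(1/\alpha^2)$ on this region, so that $\alpha\Delta w_\eps=O(1)$ and $\alpha^2\Delta z_\eps=O(1)$, giving $\Delta V_\eps=O(1)$ and hence $\Delta\omega_\eps=O(1)$. Combining these we obtain $R=O(1)$ on $\Om\setminus B(\xi,\rho_1)$.

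Finally, the $L^2$-estimate follows immediately: since $R$ is uniformly bounded on $B(\xi,\rho_2)\setminus B(\xi,\rho_1)$ and this annulus has area at most $\pi\rho_2^2$, we get $\|R\|_{L^2(B(\xi,\rho_2)\setminus B(\xi,\rho_1))}\le C\rho_2=O(e^{-\eps/\alpha})$ by definition \eqref{Radii} of $\rho_2$. The only delicate point in this lemma is verifying that the logarithmic blow-up of $G_\xi$ at $\xi$ is absorbed by the $\alpha$ and $\alpha^2$ prefactors on $w_\eps$ and $z_\eps$; everything else is direct substitution in the known expansions.
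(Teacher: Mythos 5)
Your proof is correct and follows essentially the same route as the paper's: bound $\alpha PU_\eps$ via Lemma~\ref{3} and the choice $\rho_1=e^{-u_0(\xi)/(2\alpha)}$, combine with the uniform bound on $V_\eps$ to get $\omega_\eps=O(1)$, note that $\lambda f_\eps(\omega_\eps)=O(1)$ and that the bubble contribution to $\Delta\omega_\eps$ is negligible on $\Om\setminus B(\xi,\rho_1)$, and close with the measure bound on the annulus. You in fact supply a detail the paper compresses into an unexplained $O(1)$ for $\Delta V_\eps$: using \eqref{eqweps}--\eqref{eqzeps} together with $G_\xi=O(1/\alpha)$ on $\Om\setminus B(\xi,\rho_1)$ to see that $\alpha\Delta w_\eps$ and $\alpha^2\Delta z_\eps$ are $O(1)$ is precisely the right justification.
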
 
\begin{proof} 
Let us recall that $\omega_\eps =  \alpha P U_\eps - V_\eps$ with $V_\eps=V_{\eps,\alpha,\xi}$ defined as in \eqref{Veps}. According to Lemma \ref{Lemma veps} and Lemma \ref{Lemma weps & zeps}, we have $V_\eps=O(1)$  in $\Omega$. Besides  Lemma \ref{3} gives
$$
\alpha PU_{\eps} = \alpha \log\left( \frac{1}{(\mu^2 \delta^2 + |x-\xi|^2)^2}\right) +O(\alpha) = O( \alpha \log \frac{1}{\rho_1}) +O(\alpha) = O(1),
$$
for $x\in \Omega \setminus B(\xi,\rho_1)$. Then, $\omega_{\eps}=O(1)$ and $f_\eps(\omega_{\eps})=O(1)$ in $\Omega \setminus B(\xi,\rho_1)$. Similarly
\[
\begin{split}
\Delta\omega_{\eps} & = -\alpha e^{U_{\eps}} + O(1) \\
& = -\frac{\alpha \delta^2 \mu^2}{(\delta^2\mu^2+|x-\xi|^2)^2} +O(1) \\
&= O(\delta^{2}\rho_{1}^{-4}) + O(1)= O(1).
\end{split}\]
Therefore $R=O(1)$. 
\end{proof}

\subsection{Estimates in \texorpdfstring{$\Omega\setminus B(\xi,\rho_2)$}{O-B(0,r2)}} 
In  $\Omega \setminus B(\xi,\rho_{2})$ we will use that ${\omega_{\eps}} \sim 8\pi \alpha G_\xi -V_\eps$. Our choice of $V_\eps$  will make $R$  uniformly small, namely of order $\alpha^3$.  Note further that the choice of $\rho_2$ gives $\alpha { G_\xi } = O(\eps)$ on $\Omega\setminus B(\xi,\rho_2)$. 

\begin{lemma}\label{LastRegion}
As $\eps\to 0$ we have 
$$
\|PU_\eps - 8\pi G_\xi \|_{C^1(\ov{ \Omega} \setminus B(\xi,\rho_2))} = O(\delta^2 \rho_{2}^{-3}). 
$$ 
\end{lemma}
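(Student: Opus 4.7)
The plan is to reduce the estimate to a direct calculation by combining Lemma \ref{3} with the explicit formulas for $U_{\delta,\mu,\xi}$ and the Green's function. From Lemma \ref{3}, we can write
$$
PU_\eps(x) - 8\pi G_\xi(x) = U_{\delta,\mu,\xi}(x) - \log(8\mu^2\delta^2) + 8\pi H(x,\xi) - 8\pi G_\xi(x) + \psi_{\delta,\mu,\xi}(x).
$$
Using the defining relation $H(x,\xi) = G_\xi(x) - \frac{1}{2\pi}\log\frac{1}{|x-\xi|}$, the middle two terms simplify to $4\log|x-\xi|$, while the first two combine to give $-2\log(\mu^2\delta^2 + |x-\xi|^2)$. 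The upshot is the clean identity
$$
PU_\eps(x) - 8\pi G_\xi(x) = -2\log\left(1 + \frac{\mu^2\delta^2}{|x-\xi|^2}\right) + \psi_{\delta,\mu,\xi}(x).
$$

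The rest is then a direct pointwise estimate on the set $\ov{\Omega}\setminus B(\xi,\rho_2)$, where $|x-\xi|\ge \rho_2\gg \delta$. On this region, the argument of the logarithm is $1+O(\delta^2/\rho_2^2)$, so the estimate $|\log(1+t)|\le 2|t|$ for small $t$ gives the sup bound $O(\delta^2/\rho_2^2)$. For the gradient, a routine differentiation yields
$$
\nabla\log\left(1+\frac{\mu^2\delta^2}{|x-\xi|^2}\right) = \frac{-2\mu^2\delta^2(x-\xi)}{|x-\xi|^4 + \mu^2\delta^2|x-\xi|^2},
$$
whose modulus is controlled by $C\delta^2/|x-\xi|^3 \le C\delta^2/\rho_2^3$ uniformly in $\mu\in \mathcal U$ and $\xi\in B(\xi_0,\sigma)$. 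Since $\rho_2\ll 1$, the gradient estimate $O(\delta^2\rho_2^{-3})$ dominates the sup estimate $O(\delta^2\rho_2^{-2})$, and it also absorbs the $C^1$-contribution $O(\delta^2)$ of $\psi_{\delta,\mu,\xi}$ provided by Lemma \ref{3}.

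There is no substantial obstacle in this argument: the only point requiring attention is the uniformity with respect to $\mu$ and $\xi$, which follows immediately from the uniform bound on $\psi_{\delta,\mu,\xi}$ stated in Lemma \ref{3} and the compactness of $\mathcal U$ and $\ov{B(\xi_0,\sigma)}$. Putting the two bounds together yields $\|PU_\eps - 8\pi G_\xi\|_{C^1(\ov\Omega\setminus B(\xi,\rho_2))} = O(\delta^2\rho_2^{-3})$, as required.
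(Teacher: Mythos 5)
Your proof is correct and follows essentially the same route as the paper: you invoke Lemma \ref{3}, combine $U_{\de,\mu,\xi} - \log(8\mu^2\de^2)$ with $8\pi H(\cdot,\xi)$ via the definition of the regular part to obtain the identity $PU_\eps - 8\pi G_\xi = -2\log\bigl(1 + \mu^2\de^2/|x-\xi|^2\bigr) + \psi_{\de,\mu,\xi}$, and then estimate this logarithmic term and its gradient directly on $\ov\Omega\setminus B(\xi,\rho_2)$. The only difference is that you spell out the gradient computation and the comparison of the exponents explicitly, whereas the paper leaves the final bound as an observation.
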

\begin{proof}
By Lemma \ref{3} we have 
\[\begin{split}
PU_{\eps} & = \log \bra{ \frac{1}{(\de^2\mu^2 + |x-\xi |^2)^2}} + 8\pi H(x,\xi)+\psi_{\de,\mu,\xi} \\
& = -4 \log |x-\xi| + 8\pi H(x,\xi)  - 2 \log  \left( 1 + \frac{\de^2\mu^2}{|x-\xi|^2}\right)+\psi_{\de,\mu,\xi} \\
& = 8\pi G_\xi(x ) - 2 \log  \left( 1 + \frac{\de^2\mu^2}{|x-\xi|^2}\right)+\psi_{\de,\mu,\xi}
\end{split}\]
Since $\|\psi_{\de,\mu,\xi}\|_{C^1(\ov \Omega)} =O(\de^2)$ as $\eps\to 0$, it is sufficent to observe that 
$$\|\log  \left( 1 + \frac{\de^2\mu^2}{|\cdot-\xi |^2}\right)\|_{C^1(\ov \Omega\setminus B(\xi,\rho_2))}= O(\delta^2 \rho_2^{-3}).$$ 
\end{proof}

\begin{lemma}\label{negative}
There exists a constant $c>0$ such such that   
$$
\omega_{\eps} (x) \le -c \,d(x,\partial \Omega)< 0,  
$$
for any $x\in \Omega\setminus B(\xi,\rho_2)$, provided $\eps$ is sufficiently small. 
\end{lemma}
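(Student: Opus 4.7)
The plan is to decompose
$$
\omega_\eps = 8\pi\alpha G_\xi - v_\eps - \alpha w_\eps - \alpha^2 z_\eps + \alpha (PU_\eps - 8\pi G_\xi)
$$
and show that the $-v_\eps$ contribution dominates all the others on $\Om \setminus B(\xi,\rho_2)$, so that the Hopf-type lower bound $v_\eps(x) \ge c\,d(x,\partial \Om)$ from Lemma \ref{Lemma veps}(\emph{iii}) forces the desired estimate. First, I would collect some uniform a priori bounds valid throughout $\Om \setminus B(\xi,\rho_2)$. By Lemma \ref{LastRegion}, $\|PU_\eps - 8\pi G_\xi\|_{C^1(\ov \Om \setminus B(\xi,\rho_2))} = O(\de^2\rho_2^{-3})$, and by \eqref{alpha}--\eqref{delta} together with $\rho_2 = e^{-\eps/\alpha}$ one has $\alpha \de^2 \rho_2^{-3} = O(\alpha e^{-(1+o(1))/(4\alpha^2) + 3\eps/\alpha}) = o(\alpha)$, since $\alpha = O(e^{-c/\eps}) \ll \eps$. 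Moreover, since $|x-\xi| \ge \rho_2$ and $H(\cdot,\xi)$ is bounded, the Green function satisfies $0 \le G_\xi(x) \le \frac{1}{2\pi}\log\frac{1}{\rho_2} + O(1) = O(\eps/\alpha)$, whence $8\pi \alpha G_\xi = O(\eps)$ uniformly on this region.

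Next, I would fix $\gamma \in \bigl(0,\,\tfrac{1}{2}d(\xi_0,\partial \Om) - \sigma\bigr)$, so that the tubular neighborhood $N_\gamma := \{x \in \Om : d(x,\partial \Om) < \gamma\}$ is disjoint from $\ov{B(\xi_0,\sigma)}$, hence from $\{\xi\}$ for every $\xi \in B(\xi_0,\sigma)$, and split the analysis into two cases. On the interior region $(\Om \setminus B(\xi,\rho_2)) \setminus N_\gamma$, the positivity $u_0 > 0$ in $\Om$ and Lemma \ref{Lemma veps}(\emph{ii}) yield, for $\eps$ small, $v_\eps \ge \tfrac{1}{2}\min\{u_0(y) : d(y,\partial \Om)\ge\gamma\} =: c_1 > 0$, while the remaining terms are $o(1)$. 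Consequently $\omega_\eps \le -c_1/2$ here, and since $d(x,\partial \Om) \le \diam(\Om)$ this translates into $\omega_\eps(x) \le -\frac{c_1}{2\diam(\Om)}\,d(x,\partial \Om)$.

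On the boundary strip $(\Om\setminus B(\xi,\rho_2))\cap N_\gamma$, the choice of $\gamma$ ensures that $G_\xi \in C^1(\ov{N_\gamma})$ with bounds uniform in $\xi \in B(\xi_0,\sigma)$. Since $v_\eps$, $w_\eps$, $z_\eps$, $G_\xi$, and $PU_\eps - 8\pi G_\xi$ all vanish on $\partial \Om$ and have uniformly bounded $C^1$-norm on $\ov{N_\gamma}$ (by Lemmas \ref{Lemma veps}, \ref{Lemma weps & zeps}, \ref{LastRegion}), the mean value theorem promotes every $L^\infty$-bound into a linear bound in $d(x,\partial \Om)$. Combining the Hopf-type bound $v_\eps(x) \ge c\,d(x,\partial \Om)$ with
$$
|8\pi\alpha G_\xi(x)|,\ |\alpha w_\eps(x)|,\ |\alpha^2 z_\eps(x)|,\ |\alpha(PU_\eps - 8\pi G_\xi)(x)| \le C\alpha\,d(x,\partial \Om),
$$
I would obtain $\omega_\eps(x) \le (-c + C\alpha)\,d(x,\partial \Om) \le -\tfrac{c}{2}\,d(x,\partial \Om)$ for $\eps$ small enough. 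Patching the two regional estimates gives the claim.

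The main obstacle is the boundary analysis. Although each contribution to $\omega_\eps$ other than $-v_\eps$ is small in sup-norm, these bounds are \emph{not} small compared with $d(x,\partial \Om)$ for $x$ close to the boundary; a naive $L^\infty$-estimate therefore cannot beat the Hopf lower bound on $v_\eps$. Upgrading each term to a linear bound in $d(x,\partial \Om)$ crucially relies on the fact that the relevant functions vanish on $\partial \Om$ and have uniformly controlled $C^1$-norms in a neighborhood of $\partial \Om$, which is precisely the information supplied by Lemmas \ref{Lemma veps}--\ref{Lemma weps & zeps} and \ref{LastRegion}.
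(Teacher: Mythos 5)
Your argument is correct and is essentially the same as the paper's: in a tubular neighborhood of $\partial\Omega$ you beat the small terms against the Hopf lower bound on $v_\eps$ by promoting $C^1$-bounds (plus vanishing on $\partial\Omega$) into linear bounds in $d(x,\partial\Omega)$, while away from $\partial\Omega$ (and from $\xi$) you simply observe $\omega_\eps = -u_0 + o(1)$ is uniformly negative and $d(x,\partial\Omega)$ is bounded. The paper states these two steps more tersely (it doesn't introduce $N_\gamma$ explicitly and glosses over the mean-value/Hopf upgrade), but the decomposition, the key lemmas invoked, and the two-region patching are identical.
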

\begin{proof}
By Lemma \ref{Lemma veps}, Lemma \ref{Lemma weps & zeps} and \eqref{Veps} we have 
$$
V_\eps(x ) \ge c(1+O(\alpha)) d(x,\partial \Omega) \qquad \forall x\in \Omega,
$$
for some $c>0$. Then, Lemma \ref{LastRegion} implies that  
\begin{equation}\label{Boh}
 \omega_{\eps}(x) \le  - c (1+O(\alpha)) d(x,\partial \Omega)
\end{equation}
in a neighborhood of $\partial \Omega$.  By definiton of $\rho_2$, we have  that $P U_\eps = G_\xi + o(1) = O( \frac{\eps}{\alpha})$ in $\Omega \setminus B(\xi,\rho_2)$. Then, using again  Lemma \ref{Lemma veps} and Lemma \ref{Lemma weps & zeps}, we  get $\omega_\eps  = -u_0 + o(1)$  uniformly in $\Omega \setminus B(\xi,\rho_2) $. Since $u_0>0$ in $\Omega$, this toghether with \eqref{Boh} yields the conclusion.  
\end{proof}

\begin{lemma}\label{Routside}
In  $ \Omega\setminus B(\xi,\rho_2)$, we have $R = O(\alpha^3(1+{G_\xi^3}))$. In particular, $$\|R\|_{L^2(\Omega\setminus B(\xi,\rho_2))}=O(\alpha^3).$$
\end{lemma}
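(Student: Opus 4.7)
My plan is to compute $R=\Delta\omega_\eps+\lambda f_\eps(\omega_\eps)$ by direct expansion, exploiting the fact that the correctors $v_\eps$, $w_\eps$, $z_\eps$ were defined precisely so as to kill the terms of order $\alpha^0$, $\alpha^1$ and $\alpha^2$ in the Taylor expansion of $\lambda f_\eps(\omega_\eps)$ around $-v_\eps$. Thanks to Lemma \ref{LastRegion}, I will write in $\Omega\setminus B(\xi,\rho_2)$
$$
\omega_\eps=-v_\eps+h,\qquad h:=\alpha(8\pi G_\xi-w_\eps)-\alpha^2 z_\eps+O(\alpha\delta^2\rho_2^{-3}),
$$
and observe that since $w_\eps,z_\eps=O(1)$ (Lemma \ref{Lemma weps & zeps}) and $\alpha G_\xi=O(\eps)$ outside $B(\xi,\rho_2)$ (because $|\log|x-\xi||\le \eps/\alpha$ there, by definition of $\rho_2$), the perturbation $h$ is uniformly bounded and satisfies $h=O(\alpha(1+G_\xi))$.

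For $\Delta\omega_\eps$ I would use the Liouville equation for $PU_\eps$, the identity $-\Delta v_\eps=\lambda f_\eps(v_\eps)$, and \eqref{eqweps}--\eqref{eqzeps} to obtain an explicit expression containing $-\alpha e^{U_\eps}$ together with $\lambda f_\eps(v_\eps)$, $-\alpha\lambda f_\eps'(v_\eps)(8\pi G_\xi-w_\eps)$, $+\alpha^2\lambda f_\eps'(v_\eps)z_\eps$ and $-\tfrac{\alpha^2\lambda}{2}f_\eps''(-v_\eps)(8\pi G_\xi-w_\eps)^2$. Since $f_\eps$ is odd, and the segment $[-v_\eps,\omega_\eps]$ is uniformly bounded in this region (cf.\ Remark \ref{Rem beta}), a third-order Taylor expansion at $-v_\eps$ gives
$$
\lambda f_\eps(\omega_\eps)=-\lambda f_\eps(v_\eps)+\lambda f_\eps'(v_\eps)\,h+\frac{\lambda}{2}f_\eps''(-v_\eps)\,h^2+O(h^3),
$$
where I have used the parity identities $f_\eps(-t)=-f_\eps(t)$ and $f_\eps'(-t)=f_\eps'(t)$. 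Plugging in the expression of $h$, writing $h^2=\alpha^2(8\pi G_\xi-w_\eps)^2+O(\alpha^3(1+G_\xi))$ and $h^3=O(\alpha^3(1+G_\xi)^3)$, the contributions of order $\alpha^0$, $\alpha$ and $\alpha^2$ in $\lambda f_\eps(\omega_\eps)$ then cancel termwise against the matching contributions in $\Delta\omega_\eps$. What survives is
$$
R=-\alpha e^{U_\eps}+O(\alpha^3(1+G_\xi^3))+O(\alpha\delta^2\rho_2^{-3}).
$$
In $\Omega\setminus B(\xi,\rho_2)$ one has $e^{U_\eps}\le 8\mu^2\delta^2\rho_2^{-4}$, and by \eqref{alpha}--\eqref{delta} together with \eqref{Radii} both $\alpha\delta^2\rho_2^{-4}$ and $\alpha\delta^2\rho_2^{-3}$ are superexponentially small in $\alpha$, hence absorbed in $O(\alpha^3)$.

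Finally, the pointwise bound gives the $L^2$ estimate because $G_\xi(x)=-\frac{1}{2\pi}\log|x-\xi|+H(x,\xi)$ with $H\in C^1(\ov\Omega\times B(\xi_0,\sigma))$, so $G_\xi\in L^q(\Omega)$ for every $q<\infty$ with norm uniformly bounded in $\xi\in B(\xi_0,\sigma)$; in particular $\int_\Omega(1+G_\xi^6)\,dx=O(1)$, which upon squaring the pointwise bound yields $\|R\|_{L^2(\Omega\setminus B(\xi,\rho_2))}=O(\alpha^3)$. I expect the only delicate point to be the bookkeeping of the cancellations at orders $\alpha$ and $\alpha^2$; in particular, the choice of the right-hand side of \eqref{eqzeps}, with $f_\eps''(-v_\eps)$ (and not $f_\eps''(v_\eps)$), is tailored precisely to coincide with the quadratic Taylor coefficient of $f_\eps$ at the center of expansion $-v_\eps$, so that no extra parity conversion is needed in the $\alpha^2$ term.
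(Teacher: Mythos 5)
Your decomposition matches the paper's proof in all essential respects: you expand $\lambda f_\eps(\omega_\eps)$ in Taylor form about $-v_\eps$, exploit the oddness of $f_\eps$, and use the defining equations of $v_\eps$, $w_\eps$, $z_\eps$ (together with Lemma~\ref{LastRegion}) to cancel the contributions of order $\alpha^0$, $\alpha^1$, $\alpha^2$ against $\Delta\omega_\eps$. Your observation about the sign in the right-hand side of \eqref{eqzeps} being $f_\eps''(-v_\eps)$ so as to match the quadratic Taylor coefficient at the center $-v_\eps$ is exactly the point the paper is exploiting.

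There is, however, a real gap in your treatment of the cubic remainder. You write the remainder as $O(h^3)$, implicitly absorbing the Lagrange factor $\frac16 f_\eps'''(\zeta)$ into the constant, but $f_\eps'''$ is \emph{not} uniformly bounded on $(-\infty,0)$: because $|t|^{1+\eps}$ is only $C^{2}$ at the origin for $\eps<1$, one has $|f_\eps'''(t)|\le C(|t|^{\eps-1}+t^4)e^{t^2+|t|^{1+\eps}}$, which blows up like $|t|^{\eps-1}$ as $t\to 0^-$. The intermediate point $\zeta$ lies on the segment $[-v_\eps,\omega_\eps]$, and both endpoints vanish on $\partial\Omega$, so $\zeta$ does approach $0$ near the boundary. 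The fix, which the paper carries out, is to observe that both $v_\eps$ and $-\omega_\eps$ are bounded below by $c\,d(\cdot,\partial\Omega)$ (Lemma~\ref{Lemma veps}\emph{.iii} and Lemma~\ref{negative}), hence $|\zeta|\ge c\,d(\cdot,\partial\Omega)$ and $|f_\eps'''(\zeta)|=O(1+d(\cdot,\partial\Omega)^{\eps-1})$; since $G_\xi$, $w_\eps$, $z_\eps$ all vanish on $\partial\Omega$, one has $h=O(\alpha\,d(\cdot,\partial\Omega))$ there, so that $|f_\eps'''(\zeta)|\,h^3 = O(\alpha^3 d^{2+\eps})=O(\alpha^3)$ near the boundary and $O(\alpha^3 G_\xi^3)$ in the interior. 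Without this barrier-type control on the intermediate point, the step ``remainder $= O(h^3)$'' does not go through. The rest of your argument (including the $L^2$ bound via $G_\xi\in L^q(\Omega)$ uniformly for every finite $q$) is sound.
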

\begin{proof}
Since $v_\eps>0$ in $\Omega$,  $\omega_\eps<0$ in $\Omega\setminus B(\xi,\rho_2)$, and $f_\eps \in C^3((-\infty,0))$,   for any $x\in \Omega\setminus B(\xi,\rho_2)$ we can find $\theta(x)\in [0,1]$ such that 
\[
\begin{split}
f_\eps(\omega_\eps) &= f_\eps(-v_\eps +\alpha PU_\eps  -\alpha w_\eps - \alpha^2 z_\eps) \\
& =  f_\eps(-v_\eps) + f_\eps'(-v_\eps)(\alpha PU_\eps -\alpha w_\eps - \alpha^2 z_\eps) + \frac{1}{2} f_\eps''(-v_\eps) (\alpha PU_{\eps}  - \alpha w_\eps - \alpha^2 z_\eps )^2  \\
&\quad + {\frac{1}{6}}f'''( -v_\eps + \theta (\alpha PU_{\eps} - \alpha w_\eps - \alpha^2 z_\eps)) (\alpha PU_{\eps} - \alpha w_\eps - \alpha^2 z_\eps)^3\\
\end{split}
\]
According to Lemma \ref{Lemma weps & zeps} and  Lemma \ref{LastRegion}, we have \[\begin{split}
|z_\eps|+|w_\eps|=O(G_\xi) \quad \text{ and } \quad \alpha PU_{\eps}  = 8\pi\alpha G_\xi(1  + o(\alpha^3)). 
\end{split}\]
Thus we get
\[
\begin{split}
f_\eps(\omega_\eps) & = -f_\eps(v_\eps) + \alpha f_\eps'(v_\eps)(8\pi G_\xi- w_\eps) +  \alpha^2 \left(\frac{1}{2}f''(-v_\eps)(8\pi G_\xi-w_\eps)^2 - f'(v_\eps)z_\eps   \right) \\
& \qquad +  O(\alpha^3(1+ G_\xi^3))+  O(\alpha^3 |f'''( -v_\eps + \theta (\alpha PU_{\delta,\mu} - \alpha w_\eps - \alpha^2 z_\eps))|{G_\xi^3} ).
\end{split}\]
A direct computation shows the existence of a constant $C>0$ such that 
$$
|f'''_\eps(t)| \le C (|t|^{\eps-1} + t^4) e^{t^2+|t|^{1+\eps}}\quad \forall t\neq 0.
$$
Since $-v_\eps + \theta (\alpha PU_{\eps} - \alpha w_\eps -\alpha^2 z_\eps)=O(1)$ uniformly in ${\Omega\setminus B(\xi,\rho_2)}$, and since Lemma \ref{LastRegion} implies
$-v_\eps + \theta (\alpha PU_{\eps} + \alpha w_\eps + \alpha^2 z_\eps) \le -c d(\cdot,\partial \Omega)$ in a neighborhood of $\partial \Omega$, we get 
$$
|f'''( -v_\eps + \theta (\alpha PU_{\delta,\mu} - \alpha w_\eps - \alpha^2 z_\eps))|= O(1+ d(\cdot,\partial\Omega)^{\eps-1}). 
$$
Since $G_\xi=O(d(\cdot,\partial\Omega))$ near $\partial \Omega$, we deduce that 
\[
\begin{split}
f_\eps(\omega_{\eps})& = -f_\eps(v_\eps) + \alpha f_\eps'(v_\eps)(8\pi G_\xi -w_\eps) +  \alpha^2 \left(\frac{1}{2}f''(-v_\eps)(8\pi G_\xi -w_\eps)^2 - f'_\eps (v_\eps)z_\eps   \right) \\
& \qquad + O(\alpha^3(1+{G_\xi^3})).
\end{split}
\]
Since by construction we have
$
\Delta \omega_{\eps} = - \alpha e^{U_{\eps}}  -\Delta v_\eps  - \alpha \Delta w_\eps - \alpha^2 \Delta z_\eps $,
with $v_\eps$, $w_\eps$, $z_\eps$ solving \eqref{problem feps} and  \eqref{eqweps}-\eqref{eqzeps}, we conclude that
\[\begin{split}
R & = - \alpha e^{U_{\eps}}+ O(\alpha^3(1+{G_\xi^3})) \\
&=O(\delta^2 \rho_2^{-4})+ O(\alpha^3(1+{G_\xi^3}))\\
&= O(\alpha^3(1+{G_\xi^3})).
\end{split}\]
\end{proof}

\subsection{The final estimate of the error in a mixed norm}
We can summarize the estimates of the previous sections as follows: 

In $B(\xi,\rho_0)$, Corollary \ref{EstRinB0} gives  $|R|\le \alpha^3 j_\eps,$ where 
\begin{equation}\label{Def jeps}
j_\eps(x) := e^{U_{\eps}(x)} \bra{1+ |\bar U\Big(\frac{x-\xi}{\delta}\Big)|^4}. 
\end{equation}

In $B(\xi,\rho_1) \setminus B(\xi,\rho_0)$, Lemma \ref{RinBrho1} shows that the norm of $R$  in $L^{1+\alpha^2}$ is exponentially small in $\alpha$.

 Finally, in $\Omega\setminus B(\xi,\rho_1)$, Lemma \ref{RinBrho2} and Lemma \ref{Routside} give $L^2$ estimates on $R$. This suggests to introduce the norm 
\begin{equation}\label{norm}
\| f\|_{\eps}:= \| j_\eps^{-1} f \|_{L^\infty (B(\xi,\rho_0))} +  \frac{1}{\alpha^2}\| f\|_{L^{1+\alpha^2}(B(\xi,\rho_1) \setminus B(\xi,\rho_0))} + \|f\|_{L^2(\Omega\setminus B(\xi,\rho_1))}.
\end{equation}
The coefficient $\frac{1}{\alpha^2}$ is chosen in order to match the norm of $(-\Delta)^{-1}$ as a linear operator from $L^{1+\alpha^2}( B(\xi,\rho_1) \setminus B(\xi,\rho_0) )$ into $L^\infty( B(\xi,\rho_1) \setminus B(\xi,\rho_0) )$ (see Corollary \ref{CorStamp2}).

According to the estimates above we have:
 
\begin{prop}\label{EstR}
There exists $D_1>0$, $\eps_0>0$ such that 
$$
\|R\|_{\eps} \le D_1\alpha^3, 
$$
for any $\eps \in (0,\eps_0)$, $\mu\in \mathcal U$, $\xi \in B(\xi_0,\sigma)$. 
\end{prop}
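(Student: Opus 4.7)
The proof is essentially an assembly of the three regional estimates established throughout Section \ref{Sec R}, so the bulk of the work has already been done. The plan is to bound each of the three terms in the norm \eqref{norm} separately and verify that each is $O(\alpha^3)$.

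First, for the weighted $L^\infty$ term on $B(\xi,\rho_0)$: Corollary \ref{EstRinB0} gives exactly $R = O(\alpha^3 e^{U_\eps}(1+\bar U^4(\frac{\cdot-\xi}{\delta})))$ on $B(\xi,\rho_0)$, and since by \eqref{Def jeps} we have $j_\eps(x) = e^{U_\eps(x)}(1+|\bar U(\frac{x-\xi}{\delta})|^4)$, the quotient $j_\eps^{-1} R$ is uniformly $O(\alpha^3)$. This immediately yields $\|j_\eps^{-1} R\|_{L^\infty(B(\xi,\rho_0))} \le C \alpha^3$.

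Second, for the $L^{1+\alpha^2}$ term on the annulus $B(\xi,\rho_1)\setminus B(\xi,\rho_0)$: Lemma \ref{RinBrho1} with $p = 1+\alpha^2$ directly gives $\|R\|_{L^p(B(\xi,\rho_1)\setminus B(\xi,\rho_0))} = O(e^{-c_2/\sqrt{\alpha}})$. Dividing by $\alpha^2$ produces $O(\alpha^{-2} e^{-c_2/\sqrt{\alpha}})$, and since $e^{-c_2/\sqrt{\alpha}}$ decays faster than any polynomial in $\alpha$ as $\eps \to 0$ (recall $\alpha \to 0$), this contribution is in fact $o(\alpha^3)$, hence trivially $\le C\alpha^3$ for $\eps$ small.

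Third, for the $L^2$ term on $\Omega \setminus B(\xi,\rho_1)$: split this set as
\begin{equation*}
\Omega \setminus B(\xi,\rho_1) = \bigl(B(\xi,\rho_2)\setminus B(\xi,\rho_1)\bigr) \cup \bigl(\Omega \setminus B(\xi,\rho_2)\bigr).
\end{equation*}
On the annulus $B(\xi,\rho_2)\setminus B(\xi,\rho_1)$, Lemma \ref{RinBrho2} gives $\|R\|_{L^2} = O(e^{-\eps/\alpha})$, which by \eqref{alpha} is again superpolynomially small in $\alpha$ as $\eps \to 0$, hence $o(\alpha^3)$. On $\Omega \setminus B(\xi,\rho_2)$, Lemma \ref{Routside} directly gives $\|R\|_{L^2} = O(\alpha^3)$ using that $G_\xi \in L^{2\cdot 3}(\Omega)$. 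Adding the two contributions gives $\|R\|_{L^2(\Omega \setminus B(\xi,\rho_1))} \le C\alpha^3$.

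Combining the three pieces yields the desired bound $\|R\|_\eps \le D_1 \alpha^3$. The uniformity in $\mu \in \mathcal U$ and $\xi \in B(\xi_0,\sigma)$ comes for free because each of the underlying lemmas was already stated with uniform constants. There is no real obstacle here — the delicate analysis was carried out earlier in choosing the radii $\rho_0,\rho_1,\rho_2$ and the correction terms $w_\eps$, $z_\eps$ so that precisely these three regional bounds line up to give a uniform $\alpha^3$ estimate; the proposition is the clean packaging of that work.
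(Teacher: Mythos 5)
Your proof is correct and follows exactly the same route as the paper: Proposition \ref{EstR} is stated there as an immediate assembly of Corollary \ref{EstRinB0}, Lemma \ref{RinBrho1}, Lemma \ref{RinBrho2} and Lemma \ref{Routside}, with the preceding summary paragraph serving as the (tacit) proof. Your write-up simply makes explicit the routine checks that $\alpha^{-2}e^{-c_2/\sqrt{\alpha}}$ and $e^{-\eps/\alpha}$ are $o(\alpha^{3})$, both of which follow at once from \eqref{alpha} since $\alpha\to 0$ and $\eps/\alpha\to\infty$ superexponentially.
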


We conclude this section by stating some simple properties of the norm $\|\cdot\|_\eps$ and the weight $j_\eps$.

\begin{lemma}\label{L1}
There exists a constant $C>0$ such that 
$$
 \| \cdot  \|_{L^1(\Omega)} \le C \|\cdot\|_{\eps} 
$$
for any $\eps >0$, $\mu\in \mathcal U$, $\xi \in B(\xi_0,\sigma)$. 
\end{lemma}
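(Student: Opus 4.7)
The plan is to split $\|f\|_{L^1(\Omega)}$ according to the three-piece decomposition $\Omega = B(\xi,\rho_0) \cup (B(\xi,\rho_1)\setminus B(\xi,\rho_0)) \cup (\Omega\setminus B(\xi,\rho_1))$ used in the definition of $\|\cdot\|_\eps$, and to bound each piece against the corresponding term of $\|f\|_\eps$ with a constant independent of $\eps,\mu,\xi$.

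On $B(\xi,\rho_0)$ I would use the pointwise bound $|f|\le j_\eps \|j_\eps^{-1}f\|_{L^\infty(B(\xi,\rho_0))}$, so the task reduces to a uniform estimate $\|j_\eps\|_{L^1(B(\xi,\rho_0))}\le C$. Using the scaling $y=(x-\xi)/\delta$ and the relation $e^{U_\eps(x)} = \delta^{-2}e^{\bar U(y)}$ from \eqref{scaling}, the change of variables gives
\[
\int_{B(\xi,\rho_0)} j_\eps(x)\,dx = \int_{B(0,\rho_0/\delta)} e^{\bar U(y)}\bigl(1+|\bar U(y)|^4\bigr)\,dy \le \int_{\R^2} \frac{8\mu^2}{(\mu^2+|y|^2)^2}\bigl(1+|\bar U(y)|^4\bigr)\,dy,
\]
and the last integral is finite and uniformly bounded for $\mu\in\mathcal{U}$ since $e^{\bar U}$ decays like $|y|^{-4}$ at infinity while $|\bar U|^4$ grows only logarithmically.

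On the annulus $B(\xi,\rho_1)\setminus B(\xi,\rho_0)$ I would apply H\"older's inequality with exponents $p=1+\alpha^2$ and $q=(1+\alpha^2)/\alpha^2$:
\[
\|f\|_{L^1(B(\xi,\rho_1)\setminus B(\xi,\rho_0))} \le \bigl|B(\xi,\rho_1)\bigr|^{\alpha^2/(1+\alpha^2)} \|f\|_{L^{1+\alpha^2}(B(\xi,\rho_1)\setminus B(\xi,\rho_0))} \le \alpha^2 \,|\pi\rho_1^2|^{\alpha^2/(1+\alpha^2)}\,\|f\|_\eps.
\]
Since $\rho_1\to 0$ and $\alpha\to 0$ as $\eps\to 0$, the quantity $|\pi\rho_1^2|^{\alpha^2/(1+\alpha^2)}$ stays uniformly bounded (in fact tends to $1$), so this piece is controlled by $C\|f\|_\eps$. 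On $\Omega\setminus B(\xi,\rho_1)$ the Cauchy--Schwarz inequality gives $\|f\|_{L^1(\Omega\setminus B(\xi,\rho_1))}\le |\Omega|^{1/2}\|f\|_{L^2(\Omega\setminus B(\xi,\rho_1))}\le |\Omega|^{1/2}\|f\|_\eps$.

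I do not expect any substantial obstacle here: the only mildly delicate point is the $L^1$-integrability of the weight $j_\eps$ on $B(\xi,\rho_0)$, which is immediate once one notices that after rescaling by $\delta$ the weight becomes $e^{\bar U}(1+|\bar U|^4)$, an $L^1(\R^2)$ function whose norm depends continuously on $\mu\in\mathcal{U}$ and is therefore uniformly bounded. Summing the three contributions yields the claim with a constant $C$ depending only on $|\Omega|$, $\mathcal U$, and $\sup\int_{\R^2}e^{\bar U_\mu}(1+|\bar U_\mu|^4)\,dy$.
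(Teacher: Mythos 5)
Your proof is correct and takes essentially the same route as the paper: the same three-region split, the same change of variables showing $\int_{B(\xi,\rho_0)} j_\eps\,dx = \int_{B(0,\rho_0/\delta)} e^{\bar U}(1+\bar U^4)\,dy \le C$, the same H\"older bound on the annulus absorbing the $\alpha^2$ factor, and the same Cauchy--Schwarz step on $\Omega\setminus B(\xi,\rho_1)$. No gaps.
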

\begin{proof}
Let $f:\Omega \ra \R$ be a Lebesgue measurable function. Then 
\[\begin{split}
\|f\|_{L^1(B(\xi,\rho_0))} \le \|f\|_{\eps} \int_{B(\xi,\rho_0)} j_\eps  dx  = \|f\|_{\eps}\int_{B(0,\frac{\rho_0}{\de})} e^{\bar U}(1+\bar U^4) dy   \le C \|f\|_{\eps}.
\end{split}\]
By H\"older's inequality
\[
\|f\|_{L^1(B(\xi,\rho_1)\setminus B(\xi,\rho_0))  } \le 
\|f\|_{L^{1+\alpha^2}(B(\xi,\rho_1)\setminus B(\xi,\rho_0)) } \rho_1^\frac{2\alpha^2}{1+\alpha^2} \le C \|f\|_{\eps},
\]
and
\[
\|f\|_{L^1(\Omega\setminus B(\xi,\rho_1)) } \le  \|f\|_{L^2(\Omega\setminus B(\xi,\rho_1)) }|\Omega\setminus B(\xi,\rho_1)|^\frac{1}{2} \le C \|f\|_{\eps}. 
\]
Hence, the conclusion follows. 
\end{proof}

\begin{lemma}\label{barrier}
For any $\eps>0$ let $\rho_{\eps}$, $\sigma_\eps$ be such that $\rho_2 \le \sigma_\eps \le \sigma $ and  $\delta \ll \rho_\eps \le \rho_0$ as $\eps \to 0$. Let  $\ph_\eps$  of be the solution to
$$
\begin{cases}
-\Delta \ph_\eps  = j_\eps  & \text{ in }  B(\xi,\sigma_\eps)\setminus B(\xi,\rho_{\eps}),\\
\ph_\eps = 0 & \text{ on }\partial B(\xi,\sigma_\eps)\setminus B(\xi,\rho_{\eps}). 
\end{cases}
$$
As $\eps \to 0$, we have 
$$
\|\ph_\eps\|_{L^\infty(B(\xi,\sigma_\eps) \setminus B(\xi,\rho_{\eps})) }=o(1).
$$
\end{lemma}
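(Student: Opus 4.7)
The strategy is to construct an explicit radial supersolution and then invoke the maximum principle. Since $j_\eps(x)$ depends only on $r:=|x-\xi|$, by uniqueness $\ph_\eps$ is itself radial, and the problem reduces to a one-dimensional ODE on the interval $[\rho_\eps,\sigma_\eps]$. Using $e^{\bar U_\mu(y)}=8\mu^2(\mu^2+|y|^2)^{-2}$ and $\bar U_\mu(y)=-4\log|y|+O(1)$ at infinity, one gets the pointwise bound
$$
0\le j_\eps(r)\le C_\mu\,\frac{\delta^{2}(\log(r/\delta))^{4}}{r^{4}},\qquad \rho_\eps\le r\le \sigma_\eps,
$$
for some constant $C_\mu>0$, and it is enough to bound the solution of the Poisson problem with this simpler right hand side.

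The key ansatz is $\tilde\psi(r):=\delta^{2}q(L)/r^{2}$ with $L=\log(r/\delta)$ and $q$ polynomial. A direct computation yields
$$
-\Delta\tilde\psi=-\frac{\delta^{2}}{r^{4}}\bigl(q''(L)-4q'(L)+4q(L)\bigr),
$$
so that requiring $q''-4q'+4q=-CL^{4}$ reduces to linear algebra and is solved uniquely in the class of polynomials of degree four by
$$
q(L)=-\tfrac{C}{4}\bigl(L^{4}+4L^{3}+9L^{2}+12L+\tfrac{15}{2}\bigr).
$$
For $C$ large enough (depending on $C_\mu$), this gives $-\Delta\tilde\psi\ge j_\eps$ on the whole annulus, because $L\ge 1$ there by $\delta\ll\rho_\eps$. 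The function $\tilde\psi$ is negative, so I would set $\psi_*:=\tilde\psi+M_\eps$ with $M_\eps:=\max_{[\rho_\eps,\sigma_\eps]}|\tilde\psi|$; then $0\le \psi_*\le M_\eps$ in the annulus, $\psi_*\ge 0$ on the boundary, and $-\Delta\psi_*\ge j_\eps$. The weak maximum principle then gives $0\le \ph_\eps\le \psi_*\le M_\eps$.

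To conclude, a short computation shows $\frac{d}{dr}|\tilde\psi(r)|=\frac{\delta^{2}}{4r^{3}}(Q'(L)-2Q(L))$ with $Q(L):=L^{4}+4L^{3}+9L^{2}+12L+\frac{15}{2}$, and the algebraic identity $Q'(L)-2Q(L)=-2L^{4}-4L^{3}-6L^{2}-6L-3<0$ for all $L\ge 0$ implies that $|\tilde\psi|$ is strictly decreasing on $[\rho_\eps,\sigma_\eps]$. Hence $M_\eps=|\tilde\psi(\rho_\eps)|=O\bigl((\delta/\rho_\eps)^{2}(\log(\rho_\eps/\delta))^{4}\bigr)=o(1)$, since $\rho_\eps/\delta\to +\infty$ and $(\log t)^{4}/t^{2}\to 0$ as $t\to\infty$. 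The main subtlety lies in finding the correct ansatz $\delta^{2}q(L)/r^{2}$: more standard arguments based on the explicit radial 2D Green's function together with $\|j_\eps\|_{L^{1}}$, or on a logarithmic profile, only produce the weaker bound $O\bigl(\log(\sigma_\eps/\rho_\eps)\cdot(\delta/\rho_\eps)^{2}(\log(\rho_\eps/\delta))^{4}\bigr)$, whose prefactor $\log(\sigma_\eps/\rho_\eps)\sim 1/\alpha^{2}$ is not controlled by $\delta\ll\rho_\eps$ alone. The quadratic decay $r^{-2}$ in the ansatz is precisely what absorbs this extra factor and makes the final bound depend only on the inner boundary value.
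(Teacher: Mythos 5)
Your proof is correct and takes a genuinely different route from the paper. The paper's argument bounds $\delta^2 j_\eps(\xi+\delta\,\cdot)$ by the cruder $\mu(\mu^2+|y|^2)^{-3/2}$, represents the corresponding radial solution explicitly as $a+b\log|\cdot|+W$ with $W=-\log(\mu+\sqrt{|\cdot|^2+\mu^2})$, and then shows the coefficients $a,b$ are under control by using the ratio $\log R_1/\log R_2=O(\alpha)$ — an estimate that genuinely depends on the specific definitions $\rho_0=\delta e^{\eps/\alpha}$, $\rho_2=e^{-\eps/\alpha}$ and the asymptotics \eqref{alpha}--\eqref{delta}, i.e. on the hypotheses $\rho_\eps\le\rho_0$ and $\sigma_\eps\ge\rho_2$. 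Your construction instead keeps the sharp $r^{-4}\log^4(r/\delta)$ decay of $j_\eps$ and builds a direct supersolution $\tilde\psi(r)=\delta^2 q(\log(r/\delta))/r^2$; I checked the identity $-\Delta\tilde\psi=-\delta^2 r^{-4}(q''-4q'+4q)$, the coefficients of $Q(L)=L^4+4L^3+9L^2+12L+\frac{15}{2}$, the monotonicity of $|\tilde\psi|$ from $Q'-2Q=-(2L^4+4L^3+6L^2+6L+3)<0$, and the final bound $M_\eps=O\bigl((\delta/\rho_\eps)^2\log^4(\rho_\eps/\delta)\bigr)\to 0$; all are correct. What this buys you is twofold: the $L^\infty$ bound is quantitatively sharper than the paper's $O(\delta/\rho_\eps)$, and the argument uses only $\delta\ll\rho_\eps$, making the conditions $\rho_\eps\le\rho_0$, $\sigma_\eps\ge\rho_2$ in the statement superfluous for this lemma (the paper needs them precisely to control the harmonic correction, which your ansatz avoids entirely). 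Your closing remark about the $\log(\sigma_\eps/\rho_\eps)$ loss in a naive Green's-function computation is consistent with the mechanism in the paper's proof — the paper does escape that loss, but only because the specific choices of $\rho_0,\rho_2,\delta$ make $\log R_1/\log R_2$ small; you escape it structurally.
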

\begin{proof}
Let us first note that there exists a constant $c>0$, such that 
$$
{\de^2}j_{\eps}(\xi + \delta \, \cdot ) = e^{\bar U} (1+\bar U^4)  =   \frac{8 \mu^2 }{(\mu^2+|\cdot|^2)^{2}} \left(1+ \log^4\left( \frac{8\mu^2}{(\mu^2 + |\cdot|^2)^2} \right) \right) \le c \frac{\mu}{(\mu^2+|\cdot|^2)^\frac{3}{2}} 
$$
in $\R^2$.  Then, by the maximum principle,  we have 
\begin{equation}\label{MaxPrin}
|\ph_{\eps}|\le c \psi \bra{ \frac{\cdot -\xi }{\delta}} \quad \text{in } B(\xi,\sigma_\eps)\setminus B(\xi,\rho_{\eps}),
\end{equation}
where $\psi $ satisfies 
$$
\begin{cases}
-\Delta \psi = \frac{\mu}{(\mu^2+|\cdot|^2)^\frac{3}{2}}  & \text{ in }A_\eps:= B(0,\frac{\sigma_\eps} {\de}) \setminus B(0,\frac{\rho_\eps}{\de}),\\
\psi = 0 & \text{ on }\partial A_\eps. 
\end{cases}
$$
Since the function $W := -\log (\mu +\sqrt{|\cdot|^2+\mu^2})$ satisfies $-\Delta W = \frac{\mu}{(\mu^2+|\cdot|^2)^\frac{3}{2}}$, we have
$$
\psi = a + b \log |\cdot| + W,
$$
for suitable constants $a,b \in \R$. Denoting $R_1 = \frac{\rho_{\eps}}{\de}$ and $R_2 = \frac{\sigma_\eps}{\de}$ one can  verify that 
$$
a = \frac{W(R_2)\log R_1-W(R_1)\log R_2}{\log R_2-\log R_1}  \qquad \text{ and }  \qquad  b = \frac{W(R_1)-W(R_2)}{\log R_2- \log R_1}.
$$
Since 
$$
|W + \log |\cdot|| \le \frac{C\mu}{|\cdot|} = O\bra{\frac{1}{R_1}},
$$ 
uniformly in $\ov{A_\eps}$, one has $a = O\bra{\frac{\log R_2}{R_1 (\log R_2-\log R_1)}}$ and $b = 1+O\bra{\frac{1}{R_1 (\log R_2-\log R_1)}}$. Then 
\[
\begin{split}
\psi & = a + (b-1) \log |\cdot| + O(\frac{1}{R_1}) \\
& =  O\left( \frac{1}{R_1}  \frac{\log R_2}{\log R_2 -\log R_1}\right) + O\bra{\frac{1}{R_1}}  \\
& = O\left(\frac{1}{R_1}  \frac{1}{1 -\frac{\log R_1}{\log R_2}}\right) +  O\bra{\frac{1}{R_1}}.
\end{split}
\]
Since 
$$
\frac{\log R_1}{\log R_2} = \frac{\log \frac{\rho_\eps}{\de}}{\log \sigma_\eps  - \log \delta } \le  \frac{\log \frac{\rho_0}{\de}}{\log \rho_2 - \log \delta } =O(\alpha),
$$
we conclude that $\psi_{\mu} = O(\frac{1}{R_1})=o(1)$, uniformly in $A_\eps$. Then, the conclusion follows  by  \eqref{MaxPrin}. 
\end{proof}

\section{The Linear Theory}\label{Sec L}
Let us consider the linear operator 
\[
L\ph = \ph - (-\Delta)^{-1} (\lambda f'_\eps(\omega_{\eps})   \ph)
\]
introduced in \eqref{DefL}. In this section we give a priori estimates for the operator $L$ and we prove its invertibility on a suitable subspace of $H^1_0(\Omega)$.

\begin{lemma}\label{Est L} The following expansions hold: 
\begin{enumerate}
\item $\lambda f_\eps'(\omega_\eps) =  e^{U_{\eps}}(1+O(\eps^2)) $  in $B(\xi,\rho_0)$.
\item $\lambda f_\eps'(\omega_\eps) = O(\Gamma_\eps)$ in $B(\xi,\rho_1)$, with $\Gamma_\eps$ as in \eqref{Gamma}.
\item $\lambda f_{\eps}'(\omega_\eps) = O(1)$ in $\Omega \setminus B(\xi,\rho_1)$. 
\item ${ \|\lambda f_{\eps}'(\omega_\eps) \chi_{B(\xi,\rho_1)}- e^{U_\eps}\|_{\eps} = o(1)} $ as $\eps \to 0$.
\end{enumerate}
\end{lemma}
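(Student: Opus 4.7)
The starting point is the explicit formula $f_\eps'(t) = e^{t^2+|t|^{1+\eps}}\bra{1 + 2t^2 + (1+\eps)|t|^{1+\eps}}$ for $t\neq 0$, which splits $\lambda f_\eps'(\omega_\eps)$ into an exponential factor that is already controlled by Lemma \ref{expexp} and a polynomial prefactor that must be analyzed by hand. The crucial algebraic observation is that multiplying the second equation of \eqref{parameters} by $\beta$ gives $2\beta^2+(1+\eps)\beta^{1+\eps} = \beta/\alpha$, so, combined with the first equation $\lambda\beta e^{\beta^2+\beta^{1+\eps}} = \alpha/\delta^2$, one obtains the identity
\[
\lambda\, e^{\beta^2+\beta^{1+\eps}}\bra{1+2\beta^2+(1+\eps)\beta^{1+\eps}} = \bra{1+\frac{\alpha}{\beta}}\frac{1}{\delta^2} = \frac{1+O(\alpha^2)}{\delta^2}.
\]
This is the source of the expected leading behaviour $\lambda f_\eps'(\omega_\eps)\sim e^{U_\eps}$ close to $\xi$.

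For item $1$ I exploit the fact that in $B(\xi,\rho_0)$ one has $|\bar U(\tfrac{\cdot-\xi}{\delta})| = O(\eps/\alpha)$, so $\alpha^2\bar U^2 = O(\eps^2)$. Lemma \ref{expexp} then yields $e^{\omega_\eps^2+|\omega_\eps|^{1+\eps}} = e^{\beta^2+\beta^{1+\eps}} e^{\bar U}(1+O(\eps^2))$. A Taylor/Bernoulli expansion of the polynomial prefactor, using $\omega_\eps = \beta + \alpha\bar U + O(\delta)$, gives $1+2\omega_\eps^2+(1+\eps)|\omega_\eps|^{1+\eps} = \bra{1+\tfrac{\beta}{\alpha}}(1+O(\eps))$: the linear-in-$\bar U$ correction produces an $O(\eps)$ relative error, but the main term $\beta/\alpha$ is of order $\alpha^{-2}$, so this error becomes $O(\eps\alpha^2)$ after renormalization. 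Multiplying both factors and plugging in the displayed identity above produces $\lambda f_\eps'(\omega_\eps) = (1+\alpha/\beta)e^{U_\eps}(1+O(\eps^2)) = e^{U_\eps}(1+O(\eps^2))$, since $\alpha/\beta = O(\alpha^2)$ is super-exponentially smaller than $\eps^2$.

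Items $2$ and $3$ are more direct. In $B(\xi,\rho_1)$, Lemma \ref{positivity} gives $0\le\omega_\eps \le \beta(1+o(1))$, hence $1+2\omega_\eps^2+(1+\eps)|\omega_\eps|^{1+\eps} = O(\beta^2)$. Arguing as in the proof of Lemma \ref{finBrho1}, Lemma \ref{expexp} yields $e^{\omega_\eps^2+|\omega_\eps|^{1+\eps}} \le C\, e^{\beta^2+\beta^{1+\eps}}\,e^{\bar U + \alpha^2 \bar U^2(1+c_1\eps\alpha)}$ for a suitable $c_1>0$. Combining this with $\lambda\beta e^{\beta^2+\beta^{1+\eps}} = \alpha/\delta^2$ gives $\lambda f_\eps'(\omega_\eps) = O(\alpha\beta/\delta^2)\cdot e^{\bar U + \alpha^2 \bar U^2(1+c_1\eps\alpha)} = O(\Gamma_\eps)$, because $\alpha\beta = O(1)$ by \eqref{alpha}--\eqref{beta}. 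Item $3$ follows immediately from $\omega_\eps = O(1)$ on $\Omega\setminus B(\xi,\rho_1)$ (Lemma \ref{RinBrho2}) and the continuity of $f_\eps'$.

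Item $4$ is handled by estimating the three pieces of $\|\cdot\|_\eps$ separately. On $B(\xi,\rho_0)$, item $1$ gives $|\lambda f_\eps'(\omega_\eps)-e^{U_\eps}| \le C\eps^2 e^{U_\eps} \le C\eps^2 j_\eps$, so the weighted $L^\infty$ contribution is $O(\eps^2)$. On $B(\xi,\rho_1)\setminus B(\xi,\rho_0)$, item $2$ and $e^{U_\eps}\le\Gamma_\eps$ bound the $L^{1+\alpha^2}$ norm by $O(\|\Gamma_\eps\|_{L^{1+\alpha^2}}) = O(\alpha^{-1} e^{-c_2/\sqrt\alpha})$ via Lemma \ref{intGamma}, which remains $o(\alpha^2)$. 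On $\Omega\setminus B(\xi,\rho_1)$, the indicator kills $\lambda f_\eps'(\omega_\eps)$ and one is left with $\|e^{U_\eps}\|_{L^2(\Omega\setminus B(\xi,\rho_1))}$; direct integration gives $O(\delta^2/\rho_1^3) = o(1)$ by \eqref{alpha}--\eqref{delta} and \eqref{Radii}. The main obstacle throughout is the sharpness demanded in item $1$: producing $O(\eps^2)$ rather than $O(\eps)$ requires simultaneously (a) the exponential $O(\eps^2)$ cancellation from Lemma \ref{expexp} in $B(\xi,\rho_0)$ where $\alpha\bar U = O(\eps)$, and (b) the parameter identity $2\beta^2+(1+\eps)\beta^{1+\eps}=\beta/\alpha$, which converts the $O(\eps)$ correction in the polynomial prefactor into $O(\eps\alpha^2)$; losing either cancellation would degrade the final estimate and affect the contraction argument of Section \ref{Sec T}.
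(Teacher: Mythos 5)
Your proof is correct and follows essentially the same route as the paper: write $f_\eps'(t)=\bra{1+2t^2+(1+\eps)|t|^{1+\eps}}e^{t^2+|t|^{1+\eps}}$, control the exponential via Lemma \ref{expexp}, control the polynomial prefactor via the parameter system \eqref{parameters}, and for item $4$ assemble the three pieces of $\|\cdot\|_\eps$ from items $1$--$2$ together with Lemma \ref{intGamma} and the decay of $e^{U_\eps}$. Your observation that multiplying the second equation of \eqref{parameters} by $\beta$ yields $2\beta^2+(1+\eps)\beta^{1+\eps}=\beta/\alpha$, and hence $\lambda e^{\beta^2+\beta^{1+\eps}}\bra{1+2\beta^2+(1+\eps)\beta^{1+\eps}}=(1+O(\alpha^2))\delta^{-2}$, is a slightly tidier cancellation than the paper's $\lambda\beta^2(2+O(\alpha))e^{\beta^2+\beta^{1+\eps}}=(1+O(\alpha))\delta^{-2}$, though both suffice. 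One minor bookkeeping slip in item $1$: the linear-in-$\bar U$ contribution to the prefactor is $4\alpha\beta\bar U=O(\eps/\alpha)$ (since $\alpha\beta=O(1)$ and $\bar U=O(\eps/\alpha)$ on $B(\xi,\rho_0)$), giving a relative error $O(\eps\alpha)$ against the leading term $\beta/\alpha\sim\tfrac{1}{2}\alpha^{-2}$, rather than the $O(\eps\alpha^2)$ you wrote; this does not affect the conclusion because $\alpha=O(e^{-c/\eps})$ makes $O(\eps\alpha)$ vastly smaller than $O(\eps^2)$ in any case.
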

\begin{proof}
For $x\in B(\xi,\rho_0)$, using  \eqref{parameters}-\eqref{scaling}, Lemma \ref{expexp}, \eqref{cru1}, and \eqref{inBrho0}, we have that 
\[
\begin{split}
\lambda f'_\eps(\omega_{\eps}) & = \lambda (1+2\omega_{\eps}^2+ (1+\eps)\omega_{\eps}^{1+\eps}) e^{\omega_{\eps}^2 + \omega_{\eps}^{1+\eps}} \\ 
&= \lambda \beta^2(2+O(\alpha)) e^{\beta^2+\beta^{1+\eps}+ \bar U(\frac{\cdot - \xi }{\delta}) + O(\eps^2)} \\
& =  e^{U_{\eps}}(1+O(\eps^2)).  
\end{split}
\]
For $x\in B(\xi,\rho_1)$, using Remark \ref{Rem beta}, Lemma \ref{expexp}  we have\[
\begin{split}
\lambda f'_\eps(\omega_{\eps}) & = \lambda (1+2\omega_{\eps}^2+ (1+\eps)\omega_{\eps}^{1+\eps}) e^{\omega_{\eps}^2 + \omega_{\eps}^{1+\eps}} \\ 
&= \lambda \beta^2(2+O(\alpha)) e^{\beta^2+\beta^{1+\eps}+ \bar U(\frac{\cdot}{\delta}) + \bar U(\frac{\cdot-\xi}{\delta})^2(1+O(\eps \alpha))} \\
& =  O\left(  \Gamma_\eps \right).
\end{split}
\]
{ Claim \emph{3}  follows directly from Lemma \ref{RinBrho2}. Finally, claim 4 follows by claims \emph{1} and \emph{2}, using also Lemma \ref{intGamma} and the estimates 
$$
\|e^{U_\eps}\|_{L^{1+\alpha}(B(\xi,\rho_1)\setminus B(\xi,\rho_0))} =o(1), \quad   \|e^{U_\eps}\|_{L^2(\Omega\setminus B(\xi,\rho_1))} =o(1).
$$
} 
\end{proof}

According to Lemma \ref{Est L},  for $|x-\xi|\le \rho_0$,  $L$ approaches the operator 
$L_0\ph := \ph - (-\Delta)^{-1} (e^{U_{\eps}}\ph)$. Note that 
\[
\begin{split}
L_0 \ph  = 0 \quad \text{ in } \Omega \qquad &\Longleftrightarrow \qquad -\Delta \ph = e^{U_{\eps}}\ph \quad \text{ in }\Omega \\ 
& \Longleftrightarrow  \qquad  -\Delta \Phi = e^{\bar U}\Phi \quad \text{ in }\frac{\Omega-\xi} {\delta}, \text{ where } \Phi = \ph(\xi + \delta\, \cdot ). 
\end{split}
\]
Let us recall the following known fact about $L_0$ (see for example \cite{BP}).

\begin{prop}\label{5}
All  bounded weak solutions of the problem
\begin{equation}
-\Delta \Phi= e^{\bar U }\Phi\quad\hbox{ in }\R^2
\end{equation}
have the form
$$
\Phi = c_0 Z_0 + c_1 Z_1 + c_2 Z_2,
$$ 
where $c_0,c_1,c_2 \in \R$ and  
$$ 
Z_0(y):= \frac{\mu^2 - |y|^2}{\mu^2+ |y|^2}, \qquad  Z_1(y):= \frac{2\mu y_1}{\mu^2+ |y|^2}, \qquad  Z_2(y):= \frac{2\mu y_2}{\mu^2+ |y|^2}.
$$
\end{prop}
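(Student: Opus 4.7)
The plan is to transform the equation into an eigenvalue problem for the Laplace--Beltrami operator on $S^2$ via stereographic projection, and then invoke the classical spectral decomposition of the round sphere.

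First, after the rescaling $y \mapsto \mu y$ (which preserves both the equation and the form of $Z_0, Z_1, Z_2$ up to the obvious change of $\mu$), I may assume $\mu = 1$, so that $e^{\bar U(y)} = \tfrac{8}{(1+|y|^2)^2}$. Let $\pi\colon S^2\setminus\{N\}\to \mathbb{R}^2$ denote stereographic projection from the north pole $N = (0,0,1)$, and set $\tilde \Phi := \Phi \circ \pi$. Since the round metric pulls back to $g_{S^2} = \tfrac{4}{(1+|y|^2)^2}\, g_{\mathbb{R}^2}$ via $\pi^{-1}$, the conformal transformation law $\Delta_g = e^{-2f}\Delta_{g_0}$ (applied to $g = e^{2f}g_0$) converts the equation $-\Delta \Phi = e^{\bar U}\Phi$ into the eigenvalue equation
\[
-\Delta_{S^2}\tilde\Phi = 2\,\tilde\Phi \qquad \text{on } S^2\setminus\{N\}.
\]

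Second, I would extend $\tilde\Phi$ across $N$ and upgrade it to a bounded weak eigenfunction on the whole sphere. Since $\Phi$ is bounded on $\mathbb{R}^2$, so is $\tilde\Phi$ on $S^2\setminus\{N\}$; since a single point has zero $H^1$-capacity in dimension two, a standard cut-off argument (inserting logarithmic cut-offs $\eta_\eps$ around $N$ converging to $1$ in $W^{1,2}(S^2)$ and passing to the limit in the weak formulation) shows that $\tilde\Phi$ is a bounded weak, hence (by elliptic regularity) smooth, solution of $-\Delta_{S^2}\tilde\Phi = 2\tilde\Phi$ on all of $S^2$. This removable-singularity step is the main technical point of the argument, and the only one requiring care given that $\Phi$ is assumed only bounded rather than integrable or decaying.

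Third, the spectrum of $-\Delta_{S^2}$ consists of the eigenvalues $\ell(\ell+1)$ for $\ell = 0, 1, 2, \ldots$, whose eigenspaces are the spherical harmonics of degree $\ell$. Since $2 = 1\cdot 2$, $\tilde\Phi$ lies in the three-dimensional eigenspace of degree-one spherical harmonics, spanned by the restrictions to $S^2$ of the three ambient coordinates $x_1, x_2, x_3$. A direct computation shows that under $\pi$ the functions $x_1, x_2$ and $-x_3$ pull back to $Z_1, Z_2$ and $Z_0$ respectively (for $\mu = 1$); undoing the rescaling in $\mu$ then yields exactly $\Phi = c_0 Z_0 + c_1 Z_1 + c_2 Z_2$, as claimed. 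As a cross-check, one can alternatively decompose $\Phi$ in Fourier modes in polar coordinates, note that for each mode $k$ the symmetries of the Liouville equation produce an explicit bounded solution precisely for $|k|\le 1$, and use reduction of order plus an indicial analysis to rule out bounded solutions for $|k|\ge 2$.
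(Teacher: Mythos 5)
Your argument is correct, and it is a genuinely different route from the one the paper points to. The paper does not give its own proof; it refers to Baraket--Pacard \cite{BP}, where (as in most references for this nondegeneracy result, e.g.\ Chen--Li \cite{CL}, Esposito--Grossi--Pistoia \cite{egp}) the argument is the Fourier/ODE one you mention only as a cross-check at the end: expand $\Phi$ in angular Fourier modes $\Phi(r,\theta)=\sum_k \ph_k(r)e^{ik\theta}$, reduce each mode to a second-order ODE in $r$, exhibit the explicit bounded solutions $Z_0,Z_1,Z_2$ for $|k|\le 1$, and show by an indicial/Wronskian analysis that every nontrivial solution of the $|k|\ge 2$ modes is unbounded either at $0$ or at $\infty$. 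Your proof instead pulls the equation back to $-\Delta_{S^2}\wt\Phi=2\wt\Phi$ by stereographic projection, removes the singularity at the north pole, and reads off the answer from the spectral decomposition of $-\Delta_{S^2}$, recognizing $2=\ell(\ell+1)$ with $\ell=1$ and identifying the degree-one spherical harmonics with $Z_0,Z_1,Z_2$. The geometric proof is shorter and conceptually transparent once the spectrum of the sphere is taken as known, whereas the Fourier/ODE proof is self-contained, elementary, and scales better to related linearizations where there is no such clean conformal model. The one step in your write-up that carries the real weight is, as you say, the removability of the north-pole singularity: it deserves to be phrased as a two-step Caccioppoli argument — first test against $\eta_\eps^2\wt\Phi$ with logarithmic cut-offs to get $\wt\Phi\in W^{1,2}(S^2)$ from boundedness, and only then pass to the limit in the weak formulation to conclude that $\wt\Phi$ solves the eigenvalue equation on the whole sphere. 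Also worth noting explicitly: since $e^{\bar U}$ is smooth and bounded, the assumed bounded weak solution $\Phi$ is actually smooth on $\R^2$ by elliptic regularity, which is implicitly used when you treat $\wt\Phi$ as a classical solution on $S^2\setminus\{N\}$.
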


\begin{rem} The functions $Z_0,Z_1,Z_2$ are orthogonal in $D^{1,2}(\R^2)$, that is 
\begin{equation}\label{orth}
\int_{\R^2} \nabla Z_i \cdot \nabla Z_j dy =  \int_{\R^2} e^{\bar U}Z_i Z_j dy = \frac{8}{3}\pi \delta_{i,j}. 
\end{equation}
\end{rem}

In the following we denote 
$$
Z_{i,\eps}(x) :=  Z_{i}\bra{ \frac{x-\xi}{\delta}  }\quad \text{ and } \quad P Z_{i,\eps} = (-\Delta)^{-1} Z_{i,\eps},  \qquad i =0,1,2.  
$$
\begin{lemma}\label{7} 
It holds true that
$$PZ_{0,\eps}=Z_{0,\eps}+1+O(\de^2)\ \hbox{and}\ 
 PZ_{i,\eps} = Z_{i,\eps} + O(\de),\ i=1,2,
$$
uniformly with respect to $\mu \in \mathcal U $, $\xi \in B(\xi_0,\sigma)$.  
\end{lemma}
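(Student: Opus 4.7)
The plan is to exploit the fact that $PZ_{i,\eps}-Z_{i,\eps}$ (after subtracting a suitable constant in the case $i=0$) is harmonic in $\Omega$, and then to estimate its boundary values via the asymptotic behavior of $Z_i$ at infinity.

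First I would observe that, by the scaling relation \eqref{scaling} and the equation satisfied by $Z_i$ in Proposition \ref{5}, one has
$$
-\Delta Z_{i,\eps}(x) = \tfrac{1}{\delta^2}(-\Delta_y Z_i)\bra{\tfrac{x-\xi}{\delta}} = \tfrac{1}{\delta^2} e^{\bar U(\frac{x-\xi}{\delta})} Z_i\bra{\tfrac{x-\xi}{\delta}} = e^{U_\eps(x)} Z_{i,\eps}(x)
$$
in $\R^2$, which matches the equation defining $PZ_{i,\eps}$ in $\Omega$. Therefore the difference $\eta_i := PZ_{i,\eps}-Z_{i,\eps}$ is harmonic in $\Omega$ with boundary datum $\eta_i = -Z_{i,\eps}$ on $\partial\Omega$.

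Next, I would use the explicit asymptotic behavior of the $Z_i$. For $i=0$ one has the identity
$$
Z_0(y)+1 = \frac{2\mu^2}{\mu^2+|y|^2},
$$
so that $Z_0(y)+1 = O(|y|^{-2})$ as $|y|\to\infty$. For $i=1,2$, $Z_i(y)=O(|y|^{-1})$. Since $\xi\in B(\xi_0,\sigma)$ with $\sigma<\tfrac12 d(\xi_0,\partial\Omega)$, every $x\in\partial\Omega$ satisfies $|x-\xi|\ge d(\xi_0,\partial\Omega)-\sigma\ge \sigma$, hence $\big|\tfrac{x-\xi}{\delta}\big|\ge \tfrac{\sigma}{\delta}$. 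Plugging this into the expansions gives, uniformly with respect to $\mu\in\mathcal U$ and $\xi\in B(\xi_0,\sigma)$,
$$
Z_{0,\eps}(x)+1 = O(\delta^2),\qquad Z_{i,\eps}(x)=O(\delta)\quad(i=1,2),\qquad x\in\partial\Omega.
$$

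Finally, I would apply the maximum principle to conclude. The function $\eta_0-1 = PZ_{0,\eps}-Z_{0,\eps}-1$ is harmonic in $\Omega$ with boundary values $-Z_{0,\eps}-1=O(\delta^2)$, so $\|\eta_0-1\|_{L^\infty(\Omega)}=O(\delta^2)$, which yields the expansion for $PZ_{0,\eps}$. Analogously, $\eta_i$ is harmonic with boundary values $O(\delta)$, giving $PZ_{i,\eps}=Z_{i,\eps}+O(\delta)$ for $i=1,2$. The compactness of $\mathcal{U}$ and of the range of admissible $\xi$'s ensures that the constants controlling these $O$-terms can be chosen uniformly. No step here should present a real obstacle: the argument is a straightforward boundary-expansion plus maximum principle, already standard in this line of concentration analysis (cf.\ \cite{BP}).
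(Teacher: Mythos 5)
Your argument is correct: you observe that $Z_{i,\eps}$ satisfies the same equation $-\Delta Z_{i,\eps}=e^{U_\eps}Z_{i,\eps}$ in $\R^2$ as $PZ_{i,\eps}$ does in $\Omega$, so their difference (shifted by $1$ when $i=0$) is harmonic, and then you read off the $O(\de^2)$ and $O(\de)$ boundary decay from the explicit formulas for $Z_0+1$ and $Z_i$ and close with the maximum principle, with uniformity coming from $\dist(\xi,\partial\Omega)\ge\sigma$. The paper only cites \cite{egp} for this lemma, and your proof is exactly the standard argument used there; the one thing worth flagging is that you (correctly) read $PZ_{i,\eps}$ as $(-\Delta)^{-1}(e^{U_\eps}Z_{i,\eps})$, which repairs an evident typo in the displayed definition $PZ_{i,\eps}=(-\Delta)^{-1}Z_{i,\eps}$.
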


\begin{proof} See for example Appendix A in \cite{egp}.
\end{proof}

Lemma \ref{7} shows the smallness of $PZ_{i,\eps} - Z_{i,\eps}$ for $i=1,2$, but not for $i=0$.  For this reason, in many cases it is convenient to replace $P Z_{0,\eps}$ with   the funtion 
\begin{equation}\label{Def Zbis}
\wt Z_{\eps}:= \begin{cases}
Z_{0,\eps} & \text{ if } |x-\xi|\le \rho_{0},\\
 Z_{0,\eps}(\rho_{0}) (\frac{\log \rho_{1}-\log |x-\xi|}{\log \rho_{1} - \log \rho_{0}}) & \text{ if }  \rho_{0} \le |x-\xi|\le \rho_{1}, \\ 
0 & \text{ if } |x-\xi|\ge \rho_{1}.
\end{cases}
\end{equation}

\begin{lemma}\label{Zbis}
The function $\wt Z_\eps$ satisfies the following properties:
\begin{itemize}
\item $\wt Z_\eps\in H^1_0(\Omega)$ and $|\wt Z_{\eps}|\le 1$ in $\Omega$.  
\item $\|\nabla(\wt Z_{\eps} -Z_{0,\eps})\|_{L^2(\Omega)} \to 0$, uniformly for $\mu \in \mathcal U$ and $\xi \in B(\xi_0,\sigma)$.
\end{itemize}
\end{lemma}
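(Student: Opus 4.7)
The plan is to treat the two bullet points separately and reduce the $H^1$-estimate to explicit one-dimensional integrals on the three pieces that define $\wt Z_\eps$.

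First, I would verify that $\wt Z_\eps$ is continuous across the spheres $|x-\xi|=\rho_0$ and $|x-\xi|=\rho_1$: plugging $|x-\xi|=\rho_0$ into the middle expression of \eqref{Def Zbis} gives $Z_{0,\eps}(\rho_0)$ (matching the inner piece), and plugging $|x-\xi|=\rho_1$ gives $0$ (matching the outer piece). Since $\rho_1\to 0$ as $\eps\to 0$ uniformly for $(\mu,\xi)\in\mathcal U\times B(\xi_0,\sigma)$ by \eqref{alpha}-\eqref{delta} and \eqref{Radii}, we have $B(\xi,\rho_1)\subset\subset\Omega$ for small $\eps$, so $\wt Z_\eps$ vanishes in a neighborhood of $\partial\Omega$. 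Combined with the piecewise smoothness of each branch, this yields $\wt Z_\eps\in H^1_0(\Omega)$. The bound $|\wt Z_\eps|\le 1$ follows from $|Z_0|\le 1$ on $\R^2$ and the fact that the cutoff factor $\frac{\log\rho_1-\log|x-\xi|}{\log\rho_1-\log\rho_0}$ takes values in $[0,1]$ on the annulus.

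For the gradient estimate, I would split
$$
\|\nabla(\wt Z_\eps-Z_{0,\eps})\|_{L^2(\Omega)}^2 = I_1+I_2+I_3,
$$
the three integrals being over $B(\xi,\rho_0)$, the annulus $A_\eps:=B(\xi,\rho_1)\setminus B(\xi,\rho_0)$, and $\Omega\setminus B(\xi,\rho_1)$. On $B(\xi,\rho_0)$ one has $\wt Z_\eps\equiv Z_{0,\eps}$, so $I_1=0$. For the contributions of $\nabla Z_{0,\eps}$ outside $B(\xi,\rho_0)$, after rescaling $y=(x-\xi)/\delta$ and using the explicit formula $\nabla Z_0(y)=-\tfrac{4\mu^2 y}{(\mu^2+|y|^2)^2}$ (so $|\nabla Z_0(y)|^2 = O(|y|^{-6})$ as $|y|\to\infty$), one obtains
$$
\int_{\Omega\setminus B(\xi,\rho_0)}|\nabla Z_{0,\eps}|^2 dx = \int_{\frac{\Omega-\xi}{\delta}\setminus B(0,\rho_0/\delta)}|\nabla Z_0|^2 dy = O\!\bra{\tfrac{\delta^4}{\rho_0^4}}=o(1),
$$
since $\rho_0/\delta=e^{\eps/\alpha}\to +\infty$ by \eqref{Radii}. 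Finally, a direct computation in polar coordinates for the cutoff part gives
$$
\int_{A_\eps}|\nabla \wt Z_\eps|^2 dx = \frac{Z_{0,\eps}(\rho_0)^2}{(\log\rho_1-\log\rho_0)^2}\cdot 2\pi \int_{\rho_0}^{\rho_1}\frac{dr}{r} = \frac{2\pi\, Z_{0,\eps}(\rho_0)^2}{\log\rho_1-\log\rho_0}\le \frac{2\pi}{\log\rho_1-\log\rho_0}.
$$

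The only remaining step, and the one that requires paying attention, is to verify that $\log\rho_1-\log\rho_0\to +\infty$ \emph{uniformly} in $(\mu,\xi)$; combining \eqref{alpha}-\eqref{delta} with \eqref{Radii},
$$
\log\rho_1-\log\rho_0 = \log\tfrac{1}{\delta}-\tfrac{u_0(\xi)}{2\alpha}-\tfrac{\eps}{\alpha} = \tfrac{1+o(1)}{8\alpha^2}-\tfrac{u_0(\xi)}{2\alpha}-\tfrac{\eps}{\alpha}\to +\infty,
$$
uniformly for $\mu\in\mathcal U$ and $\xi\in B(\xi_0,\sigma)$, since $u_0$ is bounded on $B(\xi_0,\sigma)$. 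Together with the previous estimates this yields $I_2+I_3=o(1)$. The computation itself is routine; the main (and only) delicate point is making sure that every $o(1)$ is uniform in the parameters, but this follows directly from the uniform statements already built into Lemma \ref{Lemma parameters}.
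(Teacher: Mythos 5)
Your argument is correct and follows essentially the same route as the paper: the contribution from $B(\xi,\rho_0)$ vanishes, the annulus piece is $\frac{2\pi Z_{0,\eps}(\rho_0)^2}{\log\rho_1-\log\rho_0}=O(\alpha^2)$ by the explicit polar-coordinate computation, and the tail $\|\nabla Z_{0,\eps}\|_{L^2(\Omega\setminus B(\xi,\rho_0))}^2=O\bigl((\delta/\rho_0)^4\bigr)=O(e^{-4\eps/\alpha})$ by rescaling, both uniform in $(\mu,\xi)$ via Lemma \ref{Lemma parameters}. The only cosmetic difference is that you keep $I_2$ and $I_3$ separate before combining (the triangle inequality step in $I_2$ is left implicit), whereas the paper bounds $\|\nabla(\wt Z_\eps-Z_{0,\eps})\|^2$ directly by the sum of the two pieces; both are fine.
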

\begin{proof}
The first property follows trivially from the definition. Moreover we have
\[\begin{split}
\|\nabla (\wt Z_{\eps} -Z_{0,\eps})\|_{L^2(\Omega)}^2 & \le  \frac{Z_{0,\eps}(\rho_{0})^2}{(\log \rho_{1} -\log \rho_{0})^2} \hspace{-0.08cm} \int_{B(\xi,\rho_{1}) \setminus B(\xi,\rho_{0})} \hspace{-0.07cm}\frac{1}{|x-\xi|^2} dx   + \|\nabla Z_{0,\eps}\|_{L^2(\Omega\setminus B(\xi,\rho_{0}))}^2  \\
& \le  \frac{2\pi Z_{0,\eps}(\rho_{0})^2}{\log \rho_{1} -\log \rho_{0}} + \|\nabla Z_{0}\|_{L^2(\R^2 \setminus B(0,\frac{\rho_{0}}{\delta}))}^2\\
& = O(\alpha^2)+ O(e^{-\frac{\eps}{\alpha}}) \to 0,
\end{split}
\]
as $\eps \to 0$. 
\end{proof}

We will denote by $K_{\eps}$ the subspace of $H^1_0(\Omega)$ spanned by $PZ_{i,\eps}$, $i=0,1,2$ and by  $K_{\eps}^{\perp}$  the subspaces of $H^1_0(\Omega)$ orthogonal to $K_{\eps}$, i.e. 
$$
K_{\eps}^{\perp} = \left\{ u\in H^{1}_0(\Omega)\;:\; \int_{\Omega} \nabla PZ_{i,\eps}  \cdot \nabla u\, dx = \int_{\Omega} e^{U_{\eps}} Z_{i,\eps} u \,dx =0, \; i=0,1,2 \right\}. 
$$
Let $\pi$ and $\pi^{\perp}$ be the projections of $H^1_0(\Omega)$ respectively on $K_{\eps}$ and $K_{\eps}^{\perp}$.  Finally, we denote
$$
Y_\eps:=\{ f\in L^1(\Omega) \;:\; \|f \|_{\eps} <+\infty  \}.
$$ 

\begin{prop}\label{linear}
There exist $\eps_0>0$ and a constant $D_0>0$ such that 
\begin{equation}\label{linearest}
\|\ph\|_{H^1_0(\Omega)} + \|\ph\|_{L^\infty(\Omega)} \le  D_0 \| h \|_{\eps},
\end{equation}
for any $\eps \in (0,\eps_0)$, $\mu \in \mathcal U$, $\xi \in B(\xi_0,\sigma)$,  $h\in Y_\eps$ and  $\ph \in K_{\eps}^{\perp}$ satisfying
\begin{equation}\label{linearEq}
\pi^\perp\left\{ L \ph - (-\Delta)^{-1}h\right\}=0.
\end{equation} 
\end{prop}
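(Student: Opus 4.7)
The plan is to argue by contradiction. Suppose the conclusion fails: there exist sequences $\eps_n \to 0^+$, $\mu_n\in\mathcal U$, $\xi_n\in B(\xi_0,\sigma)$, $h_n\in Y_{\eps_n}$ and $\ph_n\in K_{\eps_n}^{\perp}$ satisfying \eqref{linearEq} with
$$
\|\ph_n\|_{H^1_0(\Omega)}+\|\ph_n\|_{L^\infty(\Omega)}=1 \quad \text{and}\quad \|h_n\|_{\eps_n}\to 0.
$$
Up to subsequences $\mu_n\to\mu_\infty\in\ov{\mathcal U}$, $\xi_n\to\xi_\infty\in\ov{B(\xi_0,\sigma)}$, and condition \eqref{linearEq} furnishes Lagrange multipliers $c_{0,n},c_{1,n},c_{2,n}\in\R$ such that
\begin{equation*}
-\Delta\ph_n=\lambda f'_{\eps_n}(\omega_{\eps_n})\ph_n+h_n+\sum_{i=0}^{2}c_{i,n}\,e^{U_{\eps_n}}Z_{i,\eps_n}\quad\text{ in }\Omega.
\end{equation*}
The $c_{i,n}$ can be estimated by testing against $PZ_{j,\eps_n}$ (with $\wt Z_{\eps_n}$ playing the role of $PZ_{0,\eps_n}$, cf. Lemma~\ref{Zbis}) and invoking the near-orthogonality relations in \eqref{orth}.

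I would then run two complementary limiting analyses. \emph{Outer limit:} on any $\Omega'\Subset\ov\Omega\setminus\{\xi_\infty\}$, Lemma~\ref{Est L}(3) bounds $\lambda f'_{\eps_n}(\omega_{\eps_n})$ uniformly, and since $\omega_{\eps_n}\to -u_0$ in $C^1_{\loc}(\ov\Omega\setminus\{\xi_\infty\})$, while $f'_0$ is an even function, one has $\lambda f'_{\eps_n}(\omega_{\eps_n})\to \lambda f'_0(u_0)$ pointwise. The sources $e^{U_{\eps_n}}Z_{i,\eps_n}$ concentrate at $\xi_\infty$ and $h_n\to 0$ in $L^1_{\loc}$ via Lemma~\ref{L1}. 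Standard elliptic estimates yield, along a subsequence, $\ph_n\rightharpoonup\ph_\infty$ in $H^1_0(\Omega)$ and $\ph_n\to\ph_\infty$ in $C^1_{\loc}(\ov\Omega\setminus\{\xi_\infty\})$, with $\ph_\infty\in H^1_0(\Omega)$ solving $-\Delta\ph_\infty=\lambda f'_0(u_0)\ph_\infty$. Non-degeneracy (assumption~\ref{A1}) forces $\ph_\infty\equiv 0$. \emph{Inner limit:} set $\Phi_n(y):=\ph_n(\xi_n+\delta_n y)$ on $(\Omega-\xi_n)/\delta_n$. Lemma~\ref{Est L}(1) gives $\delta_n^2 \lambda f'_{\eps_n}(\omega_{\eps_n})(\xi_n+\delta_n\cdot)\to e^{\bar U_{\mu_\infty}}$ locally uniformly, and the normalization $\|\Phi_n\|_{L^\infty}\le 1$ together with elliptic regularity produces $\Phi_n\to\Phi_\infty$ in $C^1_{\loc}(\R^2)$, where $\Phi_\infty$ is a bounded solution of $-\Delta\Phi=e^{\bar U_{\mu_\infty}}\Phi$. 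By Proposition~\ref{5}, $\Phi_\infty=\sum_i a_iZ_i$; rescaling the orthogonality conditions $\int_\Omega e^{U_{\eps_n}}Z_{i,\eps_n}\ph_n\,dx=0$ and invoking \eqref{orth} forces every $a_i=0$.

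The genuine obstacle is to upgrade these two local vanishings to a global bound $\|\ph_n\|_{L^\infty(\Omega)}=o(1)$ across the intermediate annulus where neither limit is sharp. I would split the right-hand side of the equation for $\ph_n$ according to the three regions of Lemma~\ref{Est L}. On $B(\xi_n,\rho_0)$ the potential is $e^{U_{\eps_n}}(1+O(\eps^2))$, and applying $(-\Delta)^{-1}$ produces a contribution bounded by $\|\ph_n\|_{L^\infty}$ times an explicit radial supersolution whose uniform smallness is provided by Lemma~\ref{barrier}. On $B(\xi_n,\rho_1)\setminus B(\xi_n,\rho_0)$, Lemma~\ref{Est L}(4) combined with Lemma~\ref{intGamma} shows the $L^{1+\alpha^2}$ mass of $\lambda f'_{\eps_n}(\omega_{\eps_n})$ is $o(1)$, so Stampacchia's estimate (Corollary~\ref{CorStamp2}) yields an $o(\|\ph_n\|_{L^\infty})$ contribution absorbable on the left. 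On $\Omega\setminus B(\xi_n,\rho_1)$ the outer $C^1_{\loc}$ convergence applies. The source $h_n$ contributes $O(\|h_n\|_{\eps_n})=o(1)$ by the definition of $\|\cdot\|_{\eps}$, and the Lagrange multiplier terms are $o(1)$ by the control above. This delivers $\|\ph_n\|_{L^\infty(\Omega)}=o(1)$. Finally, testing the equation against $\ph_n$ and using Lemma~\ref{L1} together with the uniform $L^1$ bound on $\lambda f'_{\eps_n}(\omega_{\eps_n})$ gives
$$
\|\nabla\ph_n\|_{L^2(\Omega)}^2\le \|\ph_n\|_{L^\infty(\Omega)}^2\int_\Omega\lambda f'_{\eps_n}(\omega_{\eps_n})\,dx+\|h_n\|_{L^1(\Omega)}\|\ph_n\|_{L^\infty(\Omega)}+o(1)=o(1),
$$
contradicting $\|\ph_n\|_{H^1_0(\Omega)}+\|\ph_n\|_{L^\infty(\Omega)}=1$.
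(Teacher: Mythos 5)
Your proposal is correct and follows essentially the same line as the paper's proof: a contradiction argument, normalization of $\ph_n$, estimation of the Lagrange multipliers by testing against $\wt Z_{\eps_n}$ and $PZ_{i,\eps_n}$, inner blow-up limit via Proposition~\ref{5} and the orthogonality conditions, outer limit via the non-degeneracy assumption~\ref{A1}, a bridging step in the intermediate annulus combining the barrier of Lemma~\ref{barrier} with Stampacchia-type $L^{1+\alpha^2}$ and $L^2$ estimates, and a closing energy estimate. The only minor imprecisions are (i) the outer $C^1_{\loc}$ vanishing only holds on $\Omega\setminus B(\xi_n,\sigma_n)$ for some slowly shrinking $\sigma_n\ge\rho_{2,n}$ produced by a diagonal argument, so the region $B(\xi_n,\sigma_n)\setminus B(\xi_n,\rho_{1,n})$ must still be treated by an $L^2$ Stampacchia estimate exploiting $\sigma_n\to 0$, as the paper does; and (ii) in the final energy inequality the Lagrange multiplier contribution vanishes exactly, since $\ph_n\in K_{\eps_n}^\perp$, so the extra $o(1)$ is unnecessary though harmless.
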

\begin{proof}
We assume by contradiction that there exists $\eps_n \to 0$, $\mu_n\in \mathcal U$, $\xi_n \in B(\xi_0,\sigma)$,  $h_n\in Y_\eps$ and a solution $\ph_n\in K_{\eps_n}^{\perp}$ of \eqref{linearEq} such that 
$$
\frac{\|\ph_n\|_{H^1_0(\Omega)} + \|\ph_n\|_{L^\infty(\Omega)} }{\| h_n\|_{\eps_n}} \to +\infty. 
$$ 
Let $\delta_{n},\alpha_n,\beta_n$  be the parameters in Lemma \ref{Lemma parameters} corresponding to $\eps_n$, $\mu_n$ and $\xi_n$.  Let also $\rho_{0,n},\rho_{1,n}$, $\rho_{2,n}$ be defined as in \eqref{Radii}. We denote $\omega_n:= \omega_{\eps_n}$, $U_n := U_{\eps_{n}}$, $Z_{i,n}:= Z_{i,\eps_n}$ and $f_n:= f_{\eps_n}$.  W.l.o.g we can assume that $\|\ph_n\|_{H^1_0(\Omega)}+ \|\ph_n\|_{L^\infty(\Omega)}=1$ and $\|h_n\|_{\eps_n}\to 0$.  Since $\ph_n$ satisfies \eqref{linearEq}, there exist $c_{i,n} \in \R$, $i=0,1,2$, such that 
\begin{equation}\label{linearEq2}
-\Delta \ph_n  - \lambda f_n'(\omega_n )\ph_n  = h_n  + \sum_{i=0}^2 c_{i,n} e^{U_n} Z_{i,n}.
\end{equation}

\begin{Step} We have $c_{i,n} \to 0$ as $n\to +\infty$, $i=0,1,2$. 
\end{Step}
\medskip
Let  $\wt Z_{n}:=\wt Z_{\eps_n}$ be the function defined in {\eqref{Def Zbis}}. Testing equation \eqref{linearEq2} against $\wt Z_n$, we get
\begin{equation}\label{StL1}
\sum_{j=0}^2 c_{j,n} \int_{\Omega} e^{U_n}  Z_{j, n} \wt Z_{n} dx = \int_{\Omega} \nabla \wt  Z_n \cdot \nabla \ph_n   dx  -\int_{\Omega} \lambda f_n'(\omega_n)  \ph_n \wt Z_{n} dx - \int_{\Omega}  h_n \wt Z_n dx. 
\end{equation}
Since $\|\ph_n\|_{H^1_0(\Omega)}\le 1$ and $\ph_n \in K_{\eps_n}^\perp$, using Lemma \ref{Zbis} we get
$$ 
\int_{\Omega}  \nabla \wt Z_n \cdot \nabla \ph_n   dx  =  \int_{\Omega}  \nabla  Z_{0,n} \cdot  \nabla \ph_n   dx +o(1)=  {\underbrace{\int_{\Omega}  e^{U_n}Z_{0,n} \ph_n dx }_{=0}}  +o(1) = o(1),
$$
as $n\to +\infty$.  By Lemma \ref{Est L} and Lemma \ref{intGamma}, we find
\[\begin{split}
\int_{\Omega} \lambda f_n'(\omega_n) \ph_n \wt Z_n dx&  = \int_{B(\xi_n,\rho_{0,n})} e^{U_n} \ph_n  Z_{0,n} dx 
+O(\eps^2_n)+ O\left( \| \Gamma_\eps\|_{L^1(B(\xi_n,\rho_{1,n}) \setminus B(\xi_n,\rho_{0,n}))} \right) \\
&=    \underbrace{\int_{\Omega} e^{U_n} \ph_n  Z_{0,n} dx}_{=0} +o(1)  = o(1). 
\end{split}
\]
Finally, Lemma \ref{Zbis} and Lemma \ref{L1} give
$$
|\int_{\Omega}  h_n \wt Z_n dx| \le \|h_n\|_{L^1(\Omega)} \le C \|h_n\|_{\eps_n} =o(1). 
$$
Then \eqref{StL1} rewrites as 
\begin{equation}\label{StL2}
\sum_{j=0}^2 c_{j,n} \int_{\Omega} e^{U_n}  Z_{j, n} \wt Z_{n} dx = o(1). 
\end{equation}
With similar arguments, testing equation \eqref{linearEq2} against $PZ_{i,n}$ for $i=1,2$, we get that 
\begin{equation}\label{StL3}
\begin{split}
\sum_{j=0}^2 c_{j,n} \int_{\Omega} e^{U_n}  Z_{j, n}  P Z_{i, n} dx  & =   -\int_{\Omega} {\lambda f_n'(\omega_n)} \ph_n  P Z_{i,n} dx - \int_{\Omega}  h_n P Z_{i,n} dx\\
& =  { \underbrace{\int_{\Omega} e^{U_n} \ph_n Z_{i,n} dx }_{=0} +o(1)}=o(1).
\end{split}
\end{equation}
Note that, as in \eqref{orth},  we have
\[\begin{split}
\int_{\Omega} e^{U_n}  Z_{j, n}  \wt Z_{n} dx & = \int_{B(\xi_n,\rho_{0,n})} e^{U_n}  Z_{j, n}   Z_{0, n} dx + O\left( \int_{\R^2 \setminus B(\xi_n,\rho_{0,n})} e^{U_n}\right)  \\
& = \int_{ B(0,\frac{\rho_{0,n}}{\de_n})} e^{\ov U} Z_j Z_0 dy +o(1)\\
& =  \frac{8}{3}\pi \delta_{0j}  + o(1),
\end{split}\]
for $j= 0,1,2$. Similarly
\[\begin{split}
\int_{\Omega} e^{U_n}  Z_{j, n}  P Z_{i, n} dx & = \int_{\Omega} e^{U_n}  Z_{j, n}   Z_{i, n} dx + o(1) \\
 & = \frac{8}{3} \pi \delta_{ij}  + o(1),
\end{split}\]
for $i=1,2$, $j=0,1,2$. Then, \eqref{StL1} and \eqref{StL2} rewrite as 
\[
\sum_{j=0}^2 c_{j,n} (\delta_{ij} + o(1)) =o(1),
\]
which implies the conclusion. 

\begin{Step}
If $\widetilde h_{n} := h_n +{\left(\lambda f_n'(\omega_n)\chi_{B(\xi_n,\rho_{1,n})} - e^{U_n} \right)\ph_n}+ \sum_{j=0}^{2}c_{j,n} e^{U_n}Z_{j,n}$, then 
\begin{equation}\label{eq phi simple}
-\Delta \ph_n = e^{U_n} \ph_n + \lambda f_n'(\omega_n)\chi_{\Omega\setminus B(\xi_n,\rho_{1,n})}\ph_n + \wt h_n \quad \text{in } \Omega, \quad  \text{ and } \quad 
\|\wt h_n\|_{\eps_n} \to 0.
\end{equation}
\end{Step}

\medskip
Since $\|h_n\|_{\eps_n}\to 0$, $|Z_{i,n}|\le 1$,  and $\|\lambda f_n'(\omega_n)\chi_{B(\xi_n,\rho_{1,n})} - e^{U_n}\|_{\eps_n} \to 0$   by Lemma \ref{Est L}, it is sufficient to observe that $\|e^{U_n}\|_{\eps_n}=O(1)$ and  apply  Step 1.

\begin{Step}
There exists $\delta_n \ll \rho_n\le \rho_{0,n}$ such that, up to a subsequence, $\|\ph_n\|_{L^\infty(B(\xi_n,\rho_n))}\to  0$ as $n\to +\infty$.  
\end{Step}

Let us consider the sequence $\Phi_n(y):= \ph_n(\xi_n + \delta_n y) $, $y\in \frac{\Omega-\xi_n}{\delta_n}$. By  \eqref{eq phi simple} $\Phi_n$ satisfies 
$$
-\Delta \Phi_n  =  e^{\bar U} \Phi_n + \delta_n^2 \wt h_n(\xi + \delta_n \cdot) \qquad \text{ in } { B\Big(0,\frac{\rho_{1,n}}{\de_n}\Big)}. 
$$
We know that
$$
\left|e^{\bar U(y)}\Phi_n(y)\right|\le e^{\bar U(y)} \le \frac{8}{\mu^2},
$$
and, for  $y\in B(0,\frac{\rho_{0,n}}{\de_n})$, that
$$
\delta_n^2| \wt h_n(\xi + \delta_n y)|\le  \delta_n^2 j_{\eps_n}(\xi+ \delta_n y) \|\wt h_n\|_{\eps_n} = e^{\bar U(y)}(1+|\bar U(y)|^4)\|\wt h_n\|_{\eps_n} \le C \|\wt h_n\|_{\eps_n} \to 0.
$$
In particular $\Phi_n$ and $\Delta \Phi_n$ are uniformly bounded in $B(0,\frac{\rho_{0,n}}{\de_n})$. By standard elliptic estimates, we can find $\Phi_0\in { C(\R^2)\cap H^1_{loc}(\R^2)}$ and a sequence $R_n\to +\infty$,  $R_{n}\le \frac{\rho_{0,n}}{\de_n}$, such that, up to a subsequence,  $\|\Phi_n - \Phi_0\|_{L^\infty(B(0,R_n))}\to 0$.   Moreover,  $|\Phi_0|\le 1$ and $\Phi_0$  is a weak solution to
$$
- \Delta \Phi_0 =  e^{\bar U}\Phi_0 \quad \text{ in }\R^2. 
$$
According to Proposition \ref{5}, we must have  $\Phi_0 = \kappa_0 Z_{0}+ \kappa_1 Z_1+ \kappa_2 Z_2$, for some $\kappa_i \in \R$, $i=0,1,2$.  Keeping in mind \eqref{orth} and using that $e^{\bar U}\in L^1(\R^2)$, we obtain
\[
\begin{split}
0=\int_{\Omega} e^{U_n} Z_{i,n} \phi_n\, dx &  = \int_{\frac{\Omega-\xi_n}{\delta_n}} e^{\bar U} Z_{i} \Phi_n dy 
\\ & = \int_{B(0,R_n)} e^{\bar U} Z_{i} \Phi_n\; dy  + O\bra{ \int_{\R^2\setminus B(0,R_n)} e^{\bar U} dy } \\
   & \to  \frac{8}{3}\pi \kappa_i,
\end{split}
\]
for $i=0,1,2$. This implies $\kappa_i =0$, $i=0,1,2$. Then $\Phi_0 \equiv 0$ and  we get the conclusion with $\rho_n = \de_n R_n$.

\begin{Step} 
Up to a subsequence, $\xi_n \to \ov \xi \in \Omega$ and $\ph_n\to 0$ in $L^\infty_{loc}(\Omega\setminus \{\bar \xi \})$, as $n\to \infty$. 
\end{Step}
We know that $\ph_n$ satisfies  \eqref{eq phi simple} in $\Omega$. Since $|\ph_n|\le 1$, {$\|e^{U_n}\|_{L^{\infty}(\Omega\setminus B(\xi_n,{\rho_{1,n}}))}\to 0$,}  $\|h_n\|_{L^2(\Omega\setminus B(\xi,\rho_{1,n}))}\to 0$,  and  $\|f'_n(\omega_{n})\|_{L^\infty(\Omega\setminus B(\xi,\rho_{1,n}))} = O(1)$,  by ellpitic estimates we find that $\ph_n$ is bounded in $C^{0,\gamma}_{loc}(\ov \Omega\setminus \{\ov  \xi \})$, for some $\gamma \in (0,1)$. Therefore, there exists $\ph_0\in C(\ov \Omega)\cap H^1_0(\Omega)$, such that $\ph_n \to \ph_0$ locally uniformly on $\ov \Omega\setminus \{\ov \xi\}$ and weakly in $H^1_0(\Omega)$. Noting  that $\omega_{n}\to -u_0$ locally uniformly in $\ov \Omega\setminus \{\xi\}$ and that $f'_n$ is even, we see that $\ph_0$ satisfies  $\Delta \ph_0 + f'_0(u_0)\ph_0$ in $\Omega \setminus \{\bar \xi \}$. Actually, since $\ph_0,\Delta \ph_0 \in L^\infty(\Omega)$, $\ph_0$ is a weak solution of $\Delta \ph_0 + f'_0(u_0)\ph_0 =0$ in $\Omega$. Then, the  non-degeneracy of $u_0$ implies  $\ph_0 \equiv 0$. 

\begin{Step}  
$\|\ph_n\|_{L^\infty(\Omega)}\to 0$.
\end{Step}

\medskip
By Step 4, we can find a sequence $\sigma_n\ge \rho_{2,n}$ such that $\|\ph_n\|_{L^\infty(\Omega\setminus B(\xi_n,\sigma_n))} \to 0$ as $n\to +\infty$, up to a subsequence. Then, it is sufficient to show that $\|\ph_n\|_{L^\infty(A_n)}\to 0$, where $A_n:= B(\xi_n,\sigma_n) \setminus B(\xi_n,\rho_n)$ and $\rho_n$ is as in Step 3.   We can split $\ph_n = \ph_n^{(0)}+\ph_n^{(1)}+\ph_n^{(2)}+\ph_n^{(3)}$, where 
$$
\begin{cases}
\Delta \ph_n^{(0)}  = 0  & \text{ in }A_n,\\
\ph_n^{(0)} = \ph_n & \text{ on }\partial A_n,
\end{cases}
\qquad
\text{and}
\qquad 
\begin{cases}
- \Delta \ph_n^{(i)} =  f_{i,n} & \text{ in }A_n,\\
  \ph_n^{(i)}=0 & \text{ on }\partial A_n,
\end{cases}
\quad \text{ for } i=1,2,3,
$$
with 
$$
\begin{cases}
f_{1,n} := e^{U_n}\ph_n +\wt h_n \chi_{B(\xi_n,\rho_{0,n})},\\ 
f_{2,n} := \wt h_n \chi_{B(\xi_n,\rho_{1,n})\setminus B(\xi_n,\rho_{0,n})},\\
f_{3,n} := \wt h_n \chi_{B(\xi_n,\sigma_{n})\setminus B(\xi_n,\rho_{1,n})}+ \lambda f_n'(\omega_n) \chi_{B(\xi_n,\sigma_{n})\setminus B(\xi_n,\rho_{1,n})} \ph_n.\end{cases}$$  By the maximum principle   $$\|\ph_n^{(0)}\|_{L^\infty(A_n)} \le \|\ph_n\|_{L^\infty(\partial A_n)}\to 0.$$ 
Since
$$
|f_{1,n}| \le e^{U_n} + \|\wt h_n\|_{\eps_n} j_{\eps_{n}} \le j_{\eps_n} (1+o(1))\le 2 j_{\eps_n},
$$
we get that $|\ph_n^{(1)}|\le 2\psi_n$, where $\psi_n$ satisfies 
$$
\begin{cases}
-\Delta \psi_n = j_{\eps_n} & \text{ in }A_n\\
\psi_n = 0 & \text{ on }\partial A_n. 
\end{cases}
$$
Lemma \ref{barrier} implies $\|\psi_n\|_{L^{\infty}(A_n)} \to 0$, hence $\|\ph_n^{(1)}\|_{L^\infty(A_n)}\to 0$. Finally, since $|A_n|$ is uniformly bounded, elliptic estimates (see Corollaries \ref{CorStamp1} and \ref{CorStamp2}) give 
$$
\|\ph_n^{(2)}\|_{L^\infty(A_n)}\le \frac{C}{\alpha^2} \| f_{2,n} \|_{L^{1+\alpha^2}(A_n)}= \frac{C}{\alpha^2} \|\wt h_n \|_{L^{1+\alpha^2}(B(\xi_n,\rho_{1,n})\setminus B(\xi_n,\rho_{0,n}))} \le \|\wt h_n\|_{\eps_n} \to 0,
$$
and 
$$
{\|\ph_n^{(3)}\|_{L^\infty(A_n)}\le C \| f_{3,n} \|_{L^{2}(A_n)} = O ( \|h_n\|_{\eps_n})+ O(\sqrt{\sigma_n})\to 0}.
$$ 

\begin{Step} 
Conclusion of the proof.
\end{Step}

By Step 5, we have that $\|\ph_n\|_{H^{1}_0(\Omega)} = 1 - \|\ph_n\|_{L^\infty(\Omega)} \to 1$. But  \eqref{eq phi simple}  gives 
{\[\begin{split}
\|\ph_n\|_{H^1_0(\Omega)}^2  
&= \int_{\Omega} e^{U_n}\ph_n^2 \,dx + \int_{\Omega\setminus B(\xi,\rho_{1,n})} \lambda f'_n(\omega_n) \ph_n^2 \,dx + \int_{\Omega} \wt h_n \ph_n \,dx \\
& = O(\|\ph_n\|_{L^\infty(\Omega)}^2) + o(\|\ph_n\|_{L^2(\Omega)}) \to 0.
\end{split}\]
}
Then, we get a contadiction. 
\end{proof}

As a consequence we have that $\pi^\perp L$ is invertible on $K_{\eps}^{\perp}$. 

\begin{cor}\label{invertibility}
$\pi^\perp L:K_{\eps}^{\perp}\mapsto K_{\eps}^{\perp}$ is invertible.
\end{cor}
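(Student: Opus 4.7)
The plan is to prove this by combining the a priori estimate of Proposition \ref{linear} (which will give injectivity) with the Fredholm alternative (which upgrades injectivity to bijectivity once we know the operator is a compact perturbation of the identity). This is the standard strategy, and most of the hard analytical work is already hidden inside Proposition \ref{linear}.

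First I would rewrite $L$ as $L = \mathrm{Id} - T$, where $T\ph := (-\Delta)^{-1}(\lambda f'_\eps(\omega_\eps)\ph)$, and check that $T:H^1_0(\Omega)\to H^1_0(\Omega)$ is compact. The key observation is that $\lambda f'_\eps(\omega_\eps)\in L^p(\Omega)$ for some $p>1$: on $B(\xi,\rho_1)$ this follows from Lemma \ref{Est L} together with the estimate $\|\Gamma_\eps\|_{L^p}=O(e^{-c/\sqrt\alpha})$ (Lemma \ref{intGamma}), while on $\Omega\setminus B(\xi,\rho_1)$ we have $\lambda f'_\eps(\omega_\eps)=O(1)$. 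Since the Moser--Trudinger inequality \eqref{mt_ine} yields $\ph\in L^q(\Omega)$ for every $q<\infty$, H\"older's inequality gives $\lambda f'_\eps(\omega_\eps)\ph\in L^s(\Omega)$ for some $s>1$, and then $(-\Delta)^{-1}:L^s(\Omega)\to W^{2,s}(\Omega)\hookrightarrow H^1_0(\Omega)$ is compact. Thus $T$ is compact on $H^1_0(\Omega)$.

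Next I would restrict to $K_\eps^\perp$. For $\ph\in K_\eps^\perp$ one has $\pi^\perp \ph=\ph$, so
\begin{equation*}
\pi^\perp L\big|_{K_\eps^\perp}\ph = \ph - \pi^\perp T\ph,
\end{equation*}
and the operator $\pi^\perp T|_{K_\eps^\perp}:K_\eps^\perp\to K_\eps^\perp$ is compact as the composition of the bounded projection $\pi^\perp$ with the compact operator $T$. Hence $\pi^\perp L|_{K_\eps^\perp}$ is a compact perturbation of the identity on the Hilbert space $K_\eps^\perp$ (with the restricted $H^1_0$-inner product), and in particular it is Fredholm of index zero.

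It then remains to prove injectivity of $\pi^\perp L$ on $K_\eps^\perp$: if $\ph\in K_\eps^\perp$ satisfies $\pi^\perp L\ph=0$, one applies Proposition \ref{linear} with $h=0\in Y_\eps$, which is admissible since the equation \eqref{linearEq} reduces exactly to $\pi^\perp L\ph=0$. The estimate \eqref{linearest} gives $\|\ph\|_{H^1_0(\Omega)}+\|\ph\|_{L^\infty(\Omega)}\le D_0\|0\|_\eps=0$, so $\ph\equiv 0$. The Fredholm alternative then upgrades injectivity to bijectivity, completing the proof. The only subtle point is making sure the compactness argument for $T$ uses the correct integrability of $\lambda f'_\eps(\omega_\eps)$ uniformly with respect to the parameters; this is not really an obstacle because all the needed bounds are already encoded in Lemma \ref{Est L} and Lemma \ref{intGamma}, but it is the one place where one has to be slightly careful rather than invoking a black box.
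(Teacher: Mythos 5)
Your proof is correct and follows essentially the same route as the paper: write $\pi^\perp L$ on $K_\eps^\perp$ as the identity minus a compact operator (hence Fredholm of index $0$), then use Proposition \ref{linear} with $h=0$ to get injectivity and conclude by the Fredholm alternative. The only remark is that your compactness argument is a bit more elaborate than necessary — for each fixed $\eps>0$ the coefficient $\lambda f'_\eps(\omega_\eps)$ is simply bounded (since $\omega_\eps\in L^\infty(\Omega)$), so compactness of $T$ follows immediately from the compact Sobolev embedding $H^1_0(\Omega)\hookrightarrow L^q(\Omega)$ without needing the uniform-in-$\eps$ bounds from Lemmas \ref{Est L} and \ref{intGamma}.
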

\begin{proof}
This follows by standard Fredholm theory. Indeed, for any $\eps>0$ the map $F(\ph):= \pi^\perp (-\Delta)^{-1}(  f'(\omega_{\eps})\ph ) $ defines a compact operator on $K_{\eps}^\perp$ (in fact on $H^1_0(\Omega)$). Then $\pi^{\perp}L= Id_{K_{\eps}^{\perp}} - F$ is a Fredholm operator of index $0.$ Proposition \ref{linear} implies that $\pi^\perp L$ is injective, hence it is invertible on $K_{\eps}^\perp$. 
\end{proof}

\section{The reduction to a finite dimensional problem}\label{Sec T} 

This section is devoted to reduce the problem to a finite dimensional one. More precisely, we prove: 
\begin{prop}\label{projsol}
There exist $\eps_0>0$ and  a  map $(\eps,\mu,\xi)\to \ph_{\eps,\mu,\xi}\in K_\eps^\perp \cap L^\infty(\Omega)$ defined in  $ (0,\eps_0)\times \mathcal U\times B(\xi_0,\sigma)$ and continuous with respect to $\mu$ and $\xi$,  such that for some $D>0$
\begin{equation}\label{norms phi}
\|\ph_{\eps,\mu,\xi}\|_{H^1_0}+\|\ph_{\eps,\mu,\xi}\|_{L^\infty} \le D\alpha^3,
\end{equation}
and
\begin{equation}\label{Eq proj}
\pi^{\perp} \Big\{L \ph_{\eps,\mu,\xi}  - (-\Delta)^{-1} (R + N(\ph_{\eps,\mu,\xi}))\Big\}=0,
\end{equation}
where the linear operator $L$ is defined in \eqref{DefL}, the error term $R$ is defined in \eqref{DefR} and the quadratic term $N$ is defined in \eqref{DefN}.
\end{prop}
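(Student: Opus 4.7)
The plan is to reformulate \eqref{Eq proj} as a fixed point problem and solve it via the contraction mapping theorem. Since $\pi^\perp L:K_\eps^\perp\to K_\eps^\perp$ is invertible by Corollary \ref{invertibility}, equation \eqref{Eq proj} is equivalent to
\begin{equation*}
\ph=T_{\eps,\mu,\xi}(\ph):=(\pi^\perp L)^{-1}\pi^\perp(-\Delta)^{-1}\bigl(R+N(\ph)\bigr).
\end{equation*}
I would work in the Banach space $X_\eps:=K_\eps^\perp\cap L^\infty(\Omega)$ endowed with the norm $\|\ph\|:=\|\ph\|_{H^1_0(\Omega)}+\|\ph\|_{L^\infty(\Omega)}$ and look for a fixed point of $T_{\eps,\mu,\xi}$ in the closed ball $B_D:=\{\ph\in X_\eps:\|\ph\|\le D\alpha^3\}$, where $D>0$ is a suitably large constant.

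The first step is to check that $T_{\eps,\mu,\xi}$ maps $B_D$ into itself for $\eps$ small. Applying Proposition \ref{linear} to $\psi=T_{\eps,\mu,\xi}(\ph)$ and $h=R+N(\ph)$ gives
\begin{equation*}
\|T_{\eps,\mu,\xi}(\ph)\|\le D_0\bigl(\|R\|_\eps+\|N(\ph)\|_\eps\bigr),
\end{equation*}
and Proposition \ref{EstR} controls $\|R\|_\eps\le D_1\alpha^3$. The key point is then the estimate $\|N(\ph)\|_\eps=o(\alpha^3)$ for $\ph\in B_D$. Writing $N(\ph)=\frac{\lambda}{2}f_\eps''(\omega_\eps+s\ph)\ph^2$ for some measurable $s(x)\in[0,1]$ via Taylor's theorem, I would estimate $f_\eps''$ region by region, following the blueprint of Lemma \ref{Est L}. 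Since $f_\eps''(\omega_\eps)$ carries roughly an extra factor $\beta=O(\alpha^{-1})$ compared to $f_\eps'(\omega_\eps)$, the expected bounds are $|f_\eps''(\omega_\eps+s\ph)|\le C\alpha^{-1}e^{U_\eps}$ on $B(\xi,\rho_0)$ and $|f_\eps''(\omega_\eps+s\ph)|\le C\alpha^{-1}\Gamma_\eps$ on $B(\xi,\rho_1)\setminus B(\xi,\rho_0)$, with boundedness on $\Omega\setminus B(\xi,\rho_1)$; combining them with $\|\ph\|_{L^\infty}^2\le D^2\alpha^6$ and the integral bounds of Lemmas \ref{intGamma}--\ref{Routside} yields $\|N(\ph)\|_\eps\le CD^2\alpha^5$. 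Choosing $D>2D_0D_1$ and $\eps$ small then gives $T_{\eps,\mu,\xi}(B_D)\subseteq B_D$.

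For the contraction step, I would write
\begin{equation*}
N(\ph_1)-N(\ph_2)=\lambda\int_0^1\bigl(f_\eps'(\omega_\eps+\ph_2+t(\ph_1-\ph_2))-f_\eps'(\omega_\eps)\bigr)\,dt\,(\ph_1-\ph_2),
\end{equation*}
so a further application of the mean value theorem gives $|N(\ph_1)-N(\ph_2)|\le C|f_\eps''(\cdot)|(|\ph_1|+|\ph_2|)|\ph_1-\ph_2|$ pointwise. The same region-by-region estimates used above then yield $\|N(\ph_1)-N(\ph_2)\|_\eps\le C\alpha^2\|\ph_1-\ph_2\|$ for $\ph_i\in B_D$, hence $\|T_{\eps,\mu,\xi}(\ph_1)-T_{\eps,\mu,\xi}(\ph_2)\|\le CD_0\alpha^2\|\ph_1-\ph_2\|\le\tfrac12\|\ph_1-\ph_2\|$ for $\eps$ small. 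Banach's theorem produces a unique fixed point $\ph_{\eps,\mu,\xi}\in B_D$, giving both \eqref{norms phi} and \eqref{Eq proj}. Continuity with respect to $(\mu,\xi)$ follows from the continuity of the parameters $\alpha,\beta,\delta$ (Lemma \ref{Lemma parameters}) and of the projected bubbles $PU_\eps$, $PZ_{i,\eps}$, which in turn ensures continuity of $L$, $\pi^\perp$, $R$ and $N$ in the relevant norms, together with the uniqueness of the fixed point in $B_D$.

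The main obstacle I expect is the estimate $\|N(\ph)\|_\eps=O(\alpha^5)$ in the intermediate annulus $B(\xi,\rho_1)\setminus B(\xi,\rho_0)$, where only an $L^{1+\alpha^2}$ bound is available and the exponential nonlinearity $f_\eps''$ may grow very rapidly. The delicate point is to verify that the bound $|f_\eps''(\omega_\eps+s\ph)|\le C\alpha^{-1}\Gamma_\eps$ really persists under the perturbation $s\ph=O(\alpha^3)$ of $\omega_\eps$; this is plausible because the $L^\infty$ size of $\ph$ is dominated by $\alpha^3\ll 1$, so the expansion of Lemma \ref{expexp} for $\omega_\eps$ is essentially unchanged, but it has to be checked carefully together with the stability of the exponential integral estimate of Lemma \ref{intGamma}.
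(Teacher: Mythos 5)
Your proposal follows essentially the same route as the paper: reformulating \eqref{Eq proj} as a fixed point problem $\ph=(\pi^\perp L)^{-1}\pi^\perp(-\Delta)^{-1}(R+N(\ph))$ on the ball of radius $O(\alpha^3)$ in $K_\eps^\perp\cap L^\infty(\Omega)$, then invoking Proposition \ref{linear} and Proposition \ref{EstR} together with a region-by-region bound $\lambda\|f_\eps''(\omega_\eps+s\ph)\|_\eps=O(\alpha^{-1})$ to close the contraction. The one organizational difference is that the paper isolates the quadratic estimate you sketch (the bound on $f_\eps''$ and the resulting $\|N(\ph_1)-N(\ph_2)\|_\eps\le D_2\alpha^{-1}(\|\ph_1\|_{L^\infty}+\|\ph_2\|_{L^\infty})\|\ph_1-\ph_2\|_{L^\infty}$) into a preliminary Lemma \ref{EstNDiff} and Remark \ref{EstN}, so the fixed-point argument becomes a short corollary; your "main obstacle" is precisely the content of that lemma and is handled there via the chain \eqref{EstN1}--\eqref{EstN6}.
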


\subsection{Estimates on \texorpdfstring{$N(\ph)$}{N(f)}}\label{Sec N}
For a function $\ph \in H^1_0(\Omega) \cap L^\infty(\Omega)$, let $N(\ph)$ be defined as in \eqref{DefN}, i.e. 
$$N(\ph)=N_{\eps,\mu,\xi }( \ph):=  \lambda \left( f_\eps(\omega_{\eps,\mu,\xi} + \ph) - f_\eps(\omega_{\eps,\mu,\xi}) - f_\eps'(\omega_{\eps,\mu,\xi}) \ph \right) .$$ 
Let us  estimate  $\|N(\ph)\|_{\eps}$, where $\|\cdot \|_{\eps}$ is defined as in \eqref{norm}.  Let us define 
\begin{equation}\label{Balpha}
\mathcal B_{\alpha}:=\{\ph \in L^\infty(\Omega)\;:\; \|\ph\|_{L^\infty(\Omega)} \le \alpha\}. 
\end{equation} 

\begin{lemma}\label{EstNDiff}
There exists $D_2>0$ such that 
$$
\|N(\ph_1) - N(\ph_2)\|_{\eps} \le D_2 \alpha^{-1} \bra{\|\ph_1\|_{L^\infty(\Omega)} +\|\ph_2\|_{L^\infty(\Omega)}}\|\ph_1-\ph_2\|_{L^\infty(\Omega)},
$$
for any $\ph_1,\ph_2\in \mathcal B_{\alpha}$. 
\end{lemma}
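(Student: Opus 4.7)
The plan is to reduce the Lipschitz estimate for $N$ to a pointwise/integral bound on $\lambda f''_\eps(\omega_\eps + s)$ for $|s|\le \alpha$, and then to dispatch each of the three regions appearing in the definition \eqref{norm} of $\|\cdot\|_\eps$ by re-using the expansions already developed for $R$ and for $\lambda f'_\eps(\omega_\eps)$.

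First I would apply the mean value theorem twice. Since $f_\eps \in C^2(\R)$, there exist $\eta$ in the convex hull of $\{\ph_1,\ph_2\}$ and $\tilde\eta = \theta\eta$ with $\theta\in [0,1]$ such that
\[
N(\ph_1)-N(\ph_2) = \lambda\bigl(f'_\eps(\omega_\eps+\eta)-f'_\eps(\omega_\eps)\bigr)(\ph_1-\ph_2) = \lambda f''_\eps(\omega_\eps+\tilde\eta)\,\eta\,(\ph_1-\ph_2).
\]
Because $\ph_1,\ph_2 \in \mathcal B_{\alpha}$, both $|\eta|$ and $|\tilde\eta|$ are bounded by $\alpha$ and in particular $|\eta|\le \|\ph_1\|_{L^\infty}+\|\ph_2\|_{L^\infty}$. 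Consequently it is enough to prove the uniform estimate $\|\lambda f''_\eps(\omega_\eps+\tilde\eta)\|_\eps = O(\alpha^{-1})$ for every $\tilde\eta$ with $\|\tilde\eta\|_{L^\infty}\le \alpha$.

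A direct computation gives $f''_\eps(t) = \bigl(4t+(1+\eps)^2|t|^\eps\mathrm{sgn}(t)+(1+2t^2+(1+\eps)|t|^{1+\eps})(2t+(1+\eps)|t|^\eps\mathrm{sgn}(t))\bigr)e^{t^2+|t|^{1+\eps}}$, so that $|f''_\eps(t)|\le C(1+|t|^3)e^{t^2+|t|^{1+\eps}}$. Plugging $\omega_\eps+\tilde\eta$ into this expression and redoing the expansions of Lemma \ref{expexp} and Lemma \ref{RinBrho0}, where the perturbation by $\tilde\eta = O(\alpha)$ only produces harmless $O(\alpha)$ corrections inside the exponential, one obtains pointwise bounds region by region: in $B(\xi,\rho_0)$, the polynomial factor $\omega_\eps^3\sim \beta^3\sim \alpha^{-3}$ combines with $\lambda e^{(\omega_\eps+\tilde\eta)^2+|\omega_\eps+\tilde\eta|^{1+\eps}} = \lambda f_\eps(\omega_\eps)/\omega_\eps\cdot(1+O(\alpha))\sim 2\alpha^2 e^{U_\eps}$ to yield $\lambda|f''_\eps(\omega_\eps+\tilde\eta)| = O(\alpha^{-1}e^{U_\eps})$; in $B(\xi,\rho_1)\setminus B(\xi,\rho_0)$, the same manipulation combined with the reasoning of Lemma \ref{Est L}(2) gives $\lambda|f''_\eps(\omega_\eps+\tilde\eta)|=O(\alpha^{-1}\Gamma_\eps)$; in $\Omega\setminus B(\xi,\rho_1)$, Lemma \ref{RinBrho2} shows $\omega_\eps = O(1)$, hence $\lambda|f''_\eps(\omega_\eps+\tilde\eta)|=O(1)$.

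Assembling these three ingredients according to \eqref{norm}: the first one, divided by $j_\eps = e^{U_\eps}(1+|\bar U|^4)$, contributes $O(\alpha^{-1})$ to the weighted $L^\infty$ piece; the second, combined with Lemma \ref{intGamma}, contributes $\alpha^{-2}\cdot \alpha^{-1}\cdot O(\alpha^{-1}e^{-c_2/\sqrt{\alpha}}) = o(\alpha^{-1})$ to the $L^{1+\alpha^2}$ piece; the third contributes $O(|\Omega|^{1/2}) = O(1)$ to the $L^2$ piece. Multiplying by $(\|\ph_1\|_{L^\infty}+\|\ph_2\|_{L^\infty})\|\ph_1-\ph_2\|_{L^\infty}$ yields the announced inequality. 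The delicate point, and the one I expect to require the most care, is the perturbation of the exponent $\omega_\eps^2+|\omega_\eps|^{1+\eps}$ by $\tilde\eta$: one must check that the resulting $O(\alpha\beta) = O(1)$ correction in the exponential remains controllable and, crucially, does not spoil the exponential smallness from Lemma \ref{intGamma} that keeps the middle region under the threshold $\alpha^{-1}$. This is handled by the same argument as in Lemma \ref{expexp} using that $|(\omega_\eps+\tilde\eta)^2+|\omega_\eps+\tilde\eta|^{1+\eps}-\omega_\eps^2-|\omega_\eps|^{1+\eps}| = O(\alpha\beta)+O(\alpha|\bar U|) = O(1)+O(\alpha|\bar U|)$, which is absorbed into the constant $c_1$ in \eqref{Gamma}.
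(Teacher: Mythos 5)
Your proof follows essentially the same route as the paper's: apply the mean value theorem twice to reduce the claim to bounding $\|\lambda f''_\eps(\omega_\eps+\tilde\eta)\|_\eps$ for $\|\tilde\eta\|_{L^\infty}\le\alpha$, then use the growth bound $|f''_\eps(t)|\le C(1+|t|^3)e^{t^2+|t|^{1+\eps}}$ and re-run the region-by-region expansions already established for $R$ and for $\lambda f'_\eps(\omega_\eps)$, getting $O(\alpha^{-1}j_\eps)$, $O(\alpha^{-1}\Gamma_\eps)$, and $O(1)$ on the three regions. One small remark on your final paragraph: the claimed exponent correction $O(\alpha\beta)+O(\alpha|\bar U|)$ is not quite the right bookkeeping, since $\alpha|\bar U|$ can grow like $\alpha^{-1}$ near $\partial B(\xi,\rho_1)$ and would \emph{not} be absorbable into the constant $c_1$ in $\Gamma_\eps$; fortunately no such term arises, because the uniform bound $-V_\eps\le\omega_\eps\le\beta(1+o(1))$ from Remark \ref{Rem beta} combined with $\omega_\eps\ge c_0$ in $B(\xi,\rho_1)$ directly gives $(\omega_\eps+\tilde\eta)^2+|\omega_\eps+\tilde\eta|^{1+\eps}=\omega_\eps^2+|\omega_\eps|^{1+\eps}+O(1)$ uniformly, which is exactly what the paper uses (its \eqref{EstN1}--\eqref{EstN3}).
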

\begin{proof}
First, for any $x\in \Omega$ we can find $\theta_1= \theta_1(x)\in [0,1]$ such that 
\[\begin{split}
N(\ph_2) - N(\ph_1) 
& =  \lambda  \left( f_\eps(\omega_{\eps} + \ph_2)-f_\eps(\omega_{\eps} + \ph_1) - f_{\eps}'(\omega_{\eps}) (\ph_2-\ph_1) \right)\\
& =  \lambda \left( f_\eps'(\omega_{\eps}+\theta_1 \ph_2 + (1-\theta_1)\ph_1)(\ph_2-\ph_1) - f_{\eps}'(\omega_{\eps}) (\ph_2-\ph_1) \right) \\
& = \lambda \left( f_\eps'(\omega_{\eps}+ \ph_3) -f_\eps'(\omega_{\eps}) \right)(\ph_2-\ph_1),
\end{split}
\]
where $\ph_3:= \theta_1 \ph_2 + (1-\theta_1)\ph_1 $. Furthermore, there exists $\theta_2 = \theta_2(x)$ such that
\[
\begin{split}
f_\eps'(\omega_{\eps}+\ph_3)  & = f_\eps'(\omega_{\eps}) +  f_\eps''(\omega_{\eps}+ \theta_2 \ph_3)\ph_3.
\end{split}
\]
Thus,  we obtain
\begin{equation}\label{Ndiff}
\begin{split}
|N(\ph_1)-N(\ph_2)| &= \lambda |f_\eps''(\omega_{\eps}+ \theta_2 \ph_3)||\ph_3| |\ph_1-\ph_2| \\
&\le  \lambda |f_\eps''(\omega_{\eps}+ \theta_2 \ph_3)| \bra{\|\ph_1\|_{L^\infty(\Omega)}+\|\ph_2\|_{L^\infty(\Omega)}}\|\ph_1-\ph_2\|_{L^\infty(\Omega)}. 
\end{split}
\end{equation}
Then, in order to conclude the proof, we shall bound $\|f''_\eps(\omega_{\eps} +\theta_2 \ph_3)\|_\eps$. Note that, there exists a universal constant $C_0>0$ such that
$$
|f_\eps''(t)| \le C_0  (1+|t|^3)e^{t^2+|t|^{1+\eps}}, \quad \forall t\in \R.
$$
By Remark \ref{Rem beta} we have $\omega_\eps = O(\beta)=O(\alpha^{-1})$. Since $|\ph_3|\le |\ph_1|+|\ph_2|\le 2\alpha$, we get
\begin{equation}\label{EstN1}
(\omega_{\eps} +\theta_2 \ph_3)^2 \le  \omega_{\eps}^2 + 2|\omega_\eps| |\ph_3| + \ph_3^2 = \omega_{\eps}^2 + O(1).
\end{equation}
By convexity, we also  have
\begin{equation}\label{EstN2}
|\omega_{\eps} +\theta_2 \ph_3|^3 \le (|\omega_{\eps}|+|\ph_3|)^3 \le 4 (|\omega_{\eps}|^3 + |\ph_3|^3) \le 4 (|\omega_{\eps}|^3 + \alpha^3).
\end{equation} 
In $ B(\xi,\rho_{1})$ we have $\omega_{\eps} \ge c_0$ by Lemma \ref{positivity}, so that
\begin{equation}\label{EstN3}
(\omega_{\eps} + \theta_2 \ph_3)^{1+\eps} \le \omega_{\eps}^{1+\eps} \bra{1+ \frac{\alpha}{c_0}}^{1+\eps} = \omega_{\eps}^{1+\eps} + O(1).
\end{equation}
Clearly \eqref{EstN1}-\eqref{EstN3} yield the existence of a constant $C_1>0$ such that
$$
|f''_\eps(\omega_{\eps}+\theta_2 \ph_3)|  \le  C_1 \alpha^{-2} \omega_\eps  e^{\omega_{\eps}^2 + |\omega_{\eps}|^{1+\eps}} = C_1 \alpha^{-2} f_\eps(\omega_\eps),
$$
in $ B(\xi,\rho_{1})$. 
{ Arguing as in Lemma \ref{RinBrho0} (see \eqref{finBrho0}) we get
\begin{equation}\label{EstN4}
\lambda |f''_\eps(\omega_{\eps}+\theta_2 \ph)| \le C \alpha^{-1} j_{\eps} \quad \text{ in }B(\xi,\rho_0).
\end{equation}
Lemma \ref{finBrho1} and Lemma \ref{intGamma} yield
%
\begin{equation}\label{EstN5}
\lambda \|f''_\eps(\omega_{\eps}+\theta_2\ph)\|_{L^{1+\alpha^2}(B(\xi,\rho_1)\setminus B(\xi,\rho_0))} = O( \alpha^{-2}e^{-\frac{c_2}{\sqrt{\alpha}}}). 
\end{equation}
Finally, thanks to Lemma \ref{RinBrho2}}, we know that 
\begin{equation}\label{EstN6}
\lambda f''_\eps(\omega_{\eps}+\theta_2 \ph_3)=O(1) \quad \text{ in } \Omega \setminus B(\xi,\rho_1). 
\end{equation}
Thanks to \eqref{EstN4}-\eqref{EstN6} we infer
$$
\lambda \|f''_\eps(\omega_{\eps}+\theta_2 \ph_3)\|_{\eps} =O(\alpha^{-1}),
$$
and the conclusion follows from \eqref{Ndiff}. 
\end{proof}

\begin{rem}\label{EstN} Applying  Lemma \ref{EstNDiff} with $\ph_2=0$, we obtain that 
$$
\|N(\ph)\|_{\eps}\le D_2 \alpha^{-1} \|\ph\|_{L^\infty(\Omega)}^2,
$$ 
for any $\ph \in \mathcal B_\alpha$. 
\end{rem}

\begin{rem}\label{RemN} The proof of Proposition  \ref{EstNDiff} and Lemma \ref{RinBrho2} also shows that 
\[
\|N(\ph)\|_{L^\infty(\Omega\setminus B(\xi,\rho_1))} \le  { D_3  \|\ph\|_{L^\infty(\Omega)}^2},
\] 
for any $\ph \in \mathcal B_\alpha$. 
\end{rem}

\subsection{Proof of Proposition \ref{projsol}: a fixed point argument}
Let us consider the operator
\begin{equation}
\T= \T_{\eps,\mu,\xi}:=(\pi^{\perp}  L)^{-1}\pi^{\perp}\left[(-\Delta)^{-1}\Big( N(\ph)+ R\Big)\right]
\end{equation}
on the space $X:= K_{\eps}^{\perp}\cap L^\infty(\Omega)$, which is a Banach space with respect to the norm $$\|\cdot\|_{X}=\|\cdot\|_{H^1_0(\Omega)}+ \|\cdot\|_{L^\infty(\Omega)}.$$ 
 Let $D_1$ and $D_0$ be the constants defined in Proposition \ref{EstR} and Proposition \ref{linear}. Let us set
$$
E_\eps:=\{ \ph \in X\;:\;  \|\ph\|_{X} \le D_0(D_1+1)\alpha^3\}.
$$
 
 Proposition \ref{projsol} is an immediate consequence of the following result.

\begin{prop}
There exists $\eps_0>0$ such that, for any $\eps\in (0,\eps_0)$, $\mu \in \mathcal{U}$, $\xi \in B(\xi_0,\sigma)$,  $\T$ has a fixed point $\ph_{\eps,\mu,\xi}\in E_\eps$, which depends continuosly on  $\mu$ and $\xi$. 
\end{prop}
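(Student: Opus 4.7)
The plan is to apply the Banach contraction mapping theorem to $\T$ on the closed ball $E_\eps\subset X$, using the three key ingredients already established: the a priori bound $\|(\pi^\perp L)^{-1}\|\le D_0$ of Proposition \ref{linear}, the error estimate $\|R\|_\eps\le D_1\alpha^3$ of Proposition \ref{EstR}, and the Lipschitz control of $N$ given by Lemma \ref{EstNDiff} and Remark \ref{EstN}. The quantitative balance is the crucial point: the Lipschitz constant of $N$ near $0$ is $O(\alpha^{-1}\cdot \alpha^3)=O(\alpha^2)$, which tends to $0$, so the contraction property holds once $\eps$ is small enough.

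First I would show that $\T$ maps $E_\eps$ into itself. Given $\ph\in E_\eps$, we have $\ph\in\mathcal B_\alpha$ for small $\eps$ since $\|\ph\|_{L^\infty}\le D_0(D_1+1)\alpha^3\le \alpha$. Then Remark \ref{EstN} gives
\[
\|N(\ph)\|_\eps \le D_2\alpha^{-1}\|\ph\|_{L^\infty}^2 \le D_2 D_0^2(D_1+1)^2 \alpha^5.
\]
Since $\T\ph$ solves $\pi^\perp\{L(\T\ph)-(-\Delta)^{-1}(N(\ph)+R)\}=0$ and $\T\ph\in K_\eps^\perp$, Proposition \ref{linear} yields
\[
\|\T\ph\|_X \le D_0\bigl(\|R\|_\eps+\|N(\ph)\|_\eps\bigr) \le D_0 D_1 \alpha^3 + D_0 D_2 D_0^2(D_1+1)^2 \alpha^5 \le D_0(D_1+1)\alpha^3
\]
for $\eps$ sufficiently small, i.e.\ $\T\ph\in E_\eps$.

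Next I would verify the contraction property. For $\ph_1,\ph_2\in E_\eps\subset\mathcal B_\alpha$, Lemma \ref{EstNDiff} and Proposition \ref{linear} give
\[
\|\T\ph_1-\T\ph_2\|_X \le D_0\|N(\ph_1)-N(\ph_2)\|_\eps \le 2 D_0^2 D_2 (D_1+1)\alpha^2 \|\ph_1-\ph_2\|_X.
\]
Choosing $\eps_0$ so small that $2D_0^2 D_2(D_1+1)\alpha(\eps,\mu,\xi)^2\le 1/2$ uniformly on $\mathcal U\times B(\xi_0,\sigma)$ (which is possible thanks to \eqref{alpha}), $\T$ is a $\frac12$-contraction on the complete metric space $E_\eps$. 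The Banach fixed-point theorem then produces a unique $\ph_{\eps,\mu,\xi}\in E_\eps$ with $\T_{\eps,\mu,\xi}\ph_{\eps,\mu,\xi}=\ph_{\eps,\mu,\xi}$, and \eqref{norms phi} as well as \eqref{Eq proj} follow at once from the definition of $\T$ and $E_\eps$.

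The remaining point is the continuous dependence on $(\mu,\xi)$. Since the approximate solution $\omega_{\eps,\mu,\xi}$, the projection $\pi^\perp$ onto $K_{\eps,\mu,\xi}^\perp$, and the remainder $R_{\eps,\mu,\xi}$ all depend continuously on $(\mu,\xi)\in\mathcal U\times B(\xi_0,\sigma)$ (continuity of $\alpha,\beta,\delta$ from Lemma \ref{Lemma parameters}, together with the smoothness of $v_\eps$, $w_{\eps,\xi}$, $z_{\eps,\xi}$, $PU_{\delta,\mu,\xi}$ and $PZ_{i,\eps}$ in these parameters), the map $(\mu,\xi)\mapsto \T_{\eps,\mu,\xi}$ is continuous from $\mathcal U\times B(\xi_0,\sigma)$ into the space of $\tfrac12$-contractions on $E_\eps$, uniformly in $(\mu,\xi)$. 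A standard argument (write $\ph_{\eps,\mu,\xi}-\ph_{\eps,\mu',\xi'}=\T_{\eps,\mu,\xi}\ph_{\eps,\mu,\xi}-\T_{\eps,\mu',\xi'}\ph_{\eps,\mu',\xi'}$, use the contraction property and the continuity of $\T$ in parameters) then gives continuity of $(\mu,\xi)\mapsto \ph_{\eps,\mu,\xi}$ in $X$. The only mildly delicate point is verifying that the norm $\|\cdot\|_\eps$ and the weight $j_\eps$, which both depend on $\xi$ through the radii $\rho_0,\rho_1$, vary continuously enough to yield continuity of $R_{\eps,\mu,\xi}$ and $N(\cdot)$ in $\|\cdot\|_\eps$; this follows from the explicit expressions established in Sections \ref{sec1} and \ref{Sec R}.
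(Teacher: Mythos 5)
Your proposal is correct and follows essentially the same route as the paper: verifying that $\T$ maps $E_\eps$ into itself and is a contraction via Propositions \ref{EstR}, \ref{linear}, Remark \ref{EstN} and Lemma \ref{EstNDiff}, then invoking the Banach fixed point theorem. The paper states the continuity in $(\mu,\xi)$ without elaboration, while you sketch the standard uniform-contraction argument; this is a minor difference in exposition, not in substance.
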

\begin{proof}
Since $E_\eps$ is a closed subspace of $X$ and $\T$ depends continuously on $\mu$ and $\xi$, it is sufficient to verifry that 
\begin{enumerate}
\item $\mathcal{T}$ maps $E_\eps$ into itself. 
\item $\mathcal{T}$ is a contraction, i.e. $||\mathcal{T}(\ph_1)-\mathcal{T}(\ph_2)||_{H^1_0(\Om)}
\le \theta||\ph_1-\ph_2||_{H^1_0(\Om)}$ for some positive constant $\theta<1$ and for any $\ph_1,\ph_2\in E_\e$.
\end{enumerate}
Then the conclusion follows by the contraction mapping theorem. 
\setcounter{Step}{0}\begin{Step}
$\mathcal{T}$ maps $E_\e$ into itself. 
\end{Step}
Let us denote   $C_0:= D_0( D_1+1)$.  Take $\ph \in E_\eps$ and set
$$
h(\ph):= R+ N(\phi).
$$
If $\eps $ is small enough, we have that $\alpha^2C_0\le 1$, so that $E_\eps\subseteq \mathcal B_\alpha$ (see \eqref{Balpha}).  By Proposition \ref{EstR} and Remark \ref{EstN} we get   
\[\begin{split}
\|h(\ph)\|_{\eps} &\le  \|R\|_{\eps} + \|N(\ph)\|_{\eps} \\
& \le  D_1 \alpha^3 + D_2 \alpha^{-1} \|\ph\|_{L^\infty(\Omega)}^2\\
& \le D_1 \alpha^3 + C_0^2 D_2 \alpha^5,
\end{split}
\]
for any $\ph \in E_\eps$. Then, if we take $\eps$ small enough so that ${C_0^2}D_2\alpha^2\le 1$, we get that 
$$
\|h(\ph)\|_{\eps}\le (D_1 +1)\alpha^3. 
$$
Since by definition
$$
\pi^\perp L(\mathcal T(\ph)) = \pi^{\perp} (-\Delta)^{-1} h(\ph),
$$
we have by Proposition \ref{linear} that  
\[\begin{split}
\|\mathcal T(\ph)\|_{X}  &\le  D_0 \|h(\ph)\|_{\eps} \le D_0  (D_1+1)\alpha^3,
\end{split}
\]
that is $\mathcal T(\ph)\in E_\eps$.
\begin{Step} 
 $\mathcal{T}$ is a contraction mapping in $E_\e$.
  \end{Step}

Let us take $\e$ small enough so that  $D_0 D_2 C_0\alpha^2\le \frac{1}{4}$ and $E_\eps\subseteq \mathcal B_\alpha$. By Propositions \ref{linear} and  \ref{EstNDiff}  we have
\[
\begin{split}
\|T(\ph_1)-T(\ph_2)\|_{X}  & \le D_0 \|h(\ph_1)-h(\ph_2)\|_{\eps}\\
& = D_0 \|N(\ph_1)-N(\ph_2)\|_{\eps} \\
& \le  D_0D_2 \alpha^{-1}( \|\ph_1\|_{L^\infty(\Omega)}+\|\ph_2\|_{L^\infty(\Omega)}) \|\ph_1-\ph_2\|_{L^\infty(\Omega)} \\
& \le  2 C_0 D_0 D_2\alpha^2 \|\ph_1-\ph_2\|_{L^\infty(\Omega)}\\
& \le \frac{1}{2}\|\ph_1-\ph_2\|_{L^\infty(\Omega)},
\end{split}
\]
for any $\ph_1,\ph_2\in E_\e$. Then, $\mathcal T$ is a contraction mapping on $E_\eps$.   
\end{proof}

\section{The reduced problem: proof of Theorem \ref{Trm precise} completed}\label{Sec Par}
Let $\ph_\eps := \ph_{\eps,\mu,\xi}$ be as in Proposition \ref{projsol}. By \eqref{Eq proj}, we can find  $\kappa_{\eps,i} = \kappa_{\eps,i}(\mu,\xi)$, $i=0,1,2$ (which depend continuously on $\mu$, and $\xi$), such that 
\begin{equation}\label{FinalEqPhi}
-\Delta \ph_\eps = \lambda f_\eps'(u_\eps){\ph_{\eps} }+R+ N(\ph_\eps) +    \sum_{j=0}^2 \kappa_{\eps,j} e^{U_{\eps}} Z_{\eps,j}.
\end{equation}
Equivalently, setting 
$u_\eps := \omega_\eps +\ph_\eps,
$
\begin{equation}\label{FinalEqu}
-\Delta u_\eps =\lambda f_\eps (u_\eps) + \sum_{j=0}^2 \kappa_{\eps,j} e^{U_{\eps}} Z_{\eps,j}.
\end{equation}
Our aim is to find the parameter $\mu = \mu(\eps)$ and the point $\xi = \xi(\eps)$ so that the $\kappa_{\eps,i}$'s are zero provided $\eps$ is small enough.
\medskip

 
 \begin{prop}\label{ki}
 It holds true that
\begin{equation}\label{FindMu}
\kappa_{0,\eps} = 6\pi \alpha^3 \left( 2- \log\Big( \frac{8}{\mu^2}\Big) +o(1) \right), \end{equation}
and
\begin{equation}\label{Expki} \begin{split}
\kappa_{i,\eps}   =   - \kappa_{0,\eps} a_{i,\eps} + \frac{3\mu}{2}  \delta   \DD{v_{\eps}}{x_i} (\xi) + O(\alpha \de),\ i=1,2
\end{split}
\end{equation} 
 as $\eps\to0$ uniformly with respect to $\mu\in\mathcal U$ and $\xi\in B(\xi_0,\sigma).$  Here, the $a_{i,\eps}$'s are continuous functions of $\mu$ and $\xi$ and  $ a_{i,\eps}= O(\alpha^2)$ uniformly for  $(\mu,\xi)\in \mathcal U \times B(\xi_0,\sigma)$. 
 \end{prop}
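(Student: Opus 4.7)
The plan is to extract $\kappa_{\eps,i}$ by testing \eqref{FinalEqPhi} against each projected bubble $PZ_{i,\eps}$, $i=0,1,2$, and solving the resulting nearly diagonal linear system. Using $-\Delta PZ_{i,\eps}=e^{U_\eps}Z_{i,\eps}$ together with the orthogonality $\int e^{U_\eps}Z_{i,\eps}\ph_\eps\,dx=0$ (since $\ph_\eps\in K_\eps^\perp$), integration by parts gives
\[\sum_{j=0}^{2}\kappa_{\eps,j}\,M_{ij}\;=\;-\int_\Omega R\,PZ_{i,\eps}\,dx\;-\;\int_\Omega N(\ph_\eps)\,PZ_{i,\eps}\,dx\;-\;\int_\Omega\bigl(\lambda f'_\eps(\omega_\eps)-e^{U_\eps}\bigr)\ph_\eps\,PZ_{i,\eps}\,dx,\]
with $M_{ij}=\int e^{U_\eps}Z_{j,\eps}PZ_{i,\eps}\,dx$. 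By Lemma \ref{7}, the orthogonality relations \eqref{orth} and the change of variables $y=(x-\xi)/\delta$, one has $M=\tfrac{8\pi}{3}\Id+o(1)$ as $\eps\to 0$, so one can invert the system and read off each $\kappa_{\eps,i}$ at leading order.

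Next I would dispose of every term on the right except $\int R\,PZ_{i,\eps}$. Remark \ref{EstN} and \eqref{norms phi} give $\|N(\ph_\eps)\|_\eps=O(\alpha^{-1}\|\ph_\eps\|_\infty^2)=O(\alpha^5)$, which together with $\|PZ_{i,\eps}\|_{L^\infty}=O(1)$ and Lemma \ref{L1} makes the nonlinear contribution $O(\alpha^5)$. For the linear-in-$\ph_\eps$ term I would split $\Omega$ as in Section \ref{Sec R}: in $B(\xi,\rho_0)$, Lemma \ref{Est L}(1) combined with the orthogonality $\int e^{U_\eps}Z_{i,\eps}\ph_\eps\,dx=0$ removes the leading part; the intermediate annulus is handled via Lemma \ref{Est L}(4) and H\"older's inequality; outside $B(\xi,\rho_1)$, $\lambda f'_\eps(\omega_\eps)=O(1)$ but $PZ_{i,\eps}$ decays super-fast, since $Z_{0,\eps}+1=O(\delta^2/|x-\xi|^2)$ and $|Z_{i,\eps}|=O(\delta/|x-\xi|)$ there.

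The heart of the argument is the computation of $\int R\,PZ_{i,\eps}\,dx$. The same region splitting, together with Lemmas \ref{RinBrho1}--\ref{Routside} and the decay of $PZ_{i,\eps}$ described above, shows that only $B(\xi,\rho_0)$ contributes at leading order. There, Lemma \ref{RinBrho0} gives $R=\alpha^3 e^{U_\eps}(2\bar U+\bar U^2)+O(\alpha^4 e^{U_\eps}(1+\bar U^4))$, and combining the expansion $PZ_{0,\eps}=Z_0+1+O(\delta^2)$ with the rescaling $y=(x-\xi)/\delta$ reduces the claim for $\kappa_{\eps,0}$ to evaluating $\int_{\R^2}e^{\bar U}(2\bar U+\bar U^2)(1+Z_0)\,dy$; after the substitution $y=\mu z$ the integrand becomes a quadratic polynomial in $\log(1+|z|^2)$ whose coefficients integrate in polar coordinates against $16/(1+|z|^2)^3\,dz$, producing the stated coefficient $6\pi(2-\log(8/\mu^2))$ after division by $M_{00}$. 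For $i=1,2$ odd symmetry in $y_i$ kills this leading $\alpha^3$-integral, so I would retain the next order: inserting the Taylor expansion $v_\eps(\xi+\delta y)=v_\eps(\xi)+\delta\nabla v_\eps(\xi)\cdot y+O(\delta^2|y|^2)$ introduces an $O(\delta)$ correction to $R$ whose pairing with $Z_i$ reduces to the elementary integral $\int e^{\bar U}y_iZ_i\,dy=4\pi\mu$, yielding the coefficient $\tfrac{3\mu}{2}\delta\,\partial_{x_i}v_\eps(\xi)$ after division by $M_{ii}$; the coupling term $-\kappa_{\eps,0}a_{i,\eps}$ records the $O(\alpha^2)$ off-diagonal perturbation of $M$ coming from the higher-order expansions of the bubbles.

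The main obstacle will be controlling $\int R\,PZ_{i,\eps}$ on the annulus $B(\xi,\rho_1)\setminus B(\xi,\rho_0)$, where no pointwise bound on $R$ is available and only the mixed-norm estimate $\|R\|_{L^{1+\alpha^2}}=O(e^{-c_2/\sqrt{\alpha}})$ of Lemma \ref{RinBrho1} applies. Pairing it via H\"older's inequality with $\|PZ_{i,\eps}\|_{L^\infty}=O(1)$ and the bounded factor $|B(\xi,\rho_1)|^{\alpha^2/(1+\alpha^2)}$ shows that this annular contribution is exponentially small in $\alpha^{-1/2}$, hence negligible compared to the leading $\alpha^3$ term.
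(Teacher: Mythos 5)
The expansion of $\kappa_{0,\eps}$ in your proposal is sound and close in spirit to the paper's Step~2 (the paper tests against the truncated kernel $\wt Z_\eps$ rather than $PZ_{0,\eps}$, but the computation of $\int_\Omega R\,(1+Z_0)\,dy$ and the resulting value $6\pi\alpha^3(2-\log(8/\mu^2))$ agree). The real issue is the expansion of $\kappa_{i,\eps}$ for $i=1,2$.

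There your argument has a genuine gap. You propose to test \eqref{FinalEqPhi} against $PZ_{i,\eps}$ and to extract the coefficient $\tfrac{3\mu}{2}\,\delta\,\DD{v_\eps}{x_i}(\xi)$ from the Taylor remainder $\delta\,\nabla v_\eps(\xi)\cdot y$ in the expansion of $R$. But this term has size $\delta$, while all your error bounds are in powers of $\alpha$: Lemma~\ref{RinBrho0} controls $R$ only up to $\alpha^4 e^{U_\eps}(1+\bar U^4)$, Remark~\ref{EstN} and \eqref{norms phi} give $\|N(\ph_\eps)\|_\eps=O(\alpha^5)$, and the $\ph_\eps$-coupling term is at best $o(\alpha^3)$. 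Since by \eqref{alpha} and \eqref{delta} one has $\delta \approx e^{-1/(8\alpha^2)}$, i.e.\ $\delta \ll \alpha^k$ for \emph{every} $k$, any quantity of order $\delta$ is completely invisible at the precision your scheme achieves. Testing against $PZ_{i,\eps}$ (a bounded test function) therefore only yields $\kappa_{i,\eps}=O(\alpha^3)$, which is the paper's Step~1 and not \eqref{Expki}. This is exactly the difficulty the introduction flags: \textquotedblleft the scaling parameter $\delta$ turns out to be much smaller than the parameter $\alpha$, whose powers control all the error terms.\textquotedblright

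The paper circumvents this by a Pohozaev-type identity: multiplying \eqref{FinalEqu} by $\DD{u_\eps}{x_i}$ and integrating. Two structural features make this work where direct testing cannot. First, $\int_\Omega f_\eps(u_\eps)\,\DD{u_\eps}{x_i}\,dx = 0$ \emph{exactly} by the divergence theorem, so the problematic bulk integral produces no error at all; the identity reduces to a boundary integral $-\tfrac12\int_{\partial\Omega}\DD{u_\eps}{x_i}\DD{u_\eps}{\nu}\,\nu_i\,d\sigma$, which can be evaluated to precision $O(\alpha^2)$ using the $C^1$ bound \eqref{C1 bound} on $\ph_\eps$ near $\partial\Omega$ and the defining equations \eqref{eqweps} of $w_\eps$ and \eqref{problem feps} of $v_\eps$ (which cancel the $O(1)$ and $O(\alpha)$ parts). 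Second, the test function $\DD{u_\eps}{x_i}$ has magnitude $\sim 1/\delta$ near $\xi$, so $\int_\Omega e^{U_\eps}Z_{j,\eps}\DD{u_\eps}{x_i}\,dx \sim -\tfrac{16\pi}{3\mu}\tfrac{\alpha}{\delta}\,\delta_{ij}$ (Step~4, \eqref{coefficients}): the coefficient of $\kappa_{i,\eps}$ is amplified by $\alpha/\delta$, so the $O(\alpha^2)$ error on the right-hand side of \eqref{FindXi} translates into an $O(\alpha\delta)$ error on $\kappa_{i,\eps}$. Without this amplification, there is no way to reach $\delta$-scale precision. Note finally that the paper's $a_{i,\eps}$ come from inverting the (nearly diagonal) matrix $a_{ij,\eps}$ built from $\int e^{U_\eps}Z_{j,\eps}\DD{u_\eps}{x_i}$, not from off-diagonal corrections to the Gram matrix $M_{ij}=\int e^{U_\eps}Z_{j,\eps}PZ_{i,\eps}$ as you suggest.
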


\begin{proof}
\setcounter{Step}{0}
\begin{Step} Let us prove that 
 \begin{equation}\label{EstKi}
\kappa_{i,\eps} = O(\alpha^3) \quad \text{ for } i =0,1,2
\end{equation}
and 
\begin{equation}\label{C1 bound}
\|\ph_\eps \|_{C^1(\ov \Omega\setminus B(\xi_0,2\sigma))} = O(\alpha^3). 
\end{equation}
\end{Step}

\medskip
First, since \eqref{norms phi} gives $\|\phi\|_{L^\infty(\Omega)}=O(\alpha^3)$, Proposition \ref{EstR}, Lemma \ref{L1}, Remark \ref{EstN} and Lemma \ref{Est L} yield   
$$
\|R\|_{L^1(\Omega)}=O(\alpha^3), \quad \|N(\ph_\eps)\|_{L^1(\Omega)} = O(\alpha^5), \quad \|\lambda f_\eps'(\omega_\eps) \ph_\eps \|_{L^1(\Omega)} =O(\alpha^3).
$$
Recalling that 
$$
\int_{\Omega} e^{U_n} Z_{j,n} PZ_{i,n} dx = \frac{8}{3}\pi \delta_{ij} +O(\delta), \text{ for } i,j = 0,1,2,
$$
by Lemma \ref{7} and \eqref{orth}, we get \eqref{EstKi} by testing equation \eqref{FinalEqPhi} with $PZ_{i,n}$, $i=0,1,2$. 

By Lemma \ref{Routside}, Remark \ref{RemN}, and Lemma \ref{Est L}, one has 
$$
\lambda f_\eps'(\omega_\eps) =O(1),  \quad   R =O(\alpha^3), \quad  { N(\ph_\eps) = O(\alpha^6)},
$$
uniformly in $\Omega\setminus {B(\xi,\frac{\sigma}{2})}$. Then
$$
{\|\Delta \ph_\eps \|_{L^\infty(\Omega \setminus B(\xi,\frac{\sigma}{2}))} + \|\ph_\eps\|_{L^\infty(\Omega)} = O(\alpha^3),}
$$
and we get \eqref{C1 bound} by  standard elliptic estimates.

\begin{Step} Proof of 
\eqref{FindMu}.
\end{Step}

\medskip
Let  $\wt Z_\eps $ be the function defined in \eqref{Def Zbis}. We shall test equation \eqref{FinalEqPhi} against $\wt Z_\eps$. With the same arguments of the proof of Proposition \ref{linear} (Step 1), we obtain 
$$
\int_{\Omega} \nabla \ph_\eps \cdot \nabla \wt Z_{\eps} \,dx =  \int_{\Omega} \nabla \ph_{\eps} \cdot \nabla Z_{0,\eps}\, dx + o(\|\ph_\eps\|_{H^1_0(\Omega)})  = o(\alpha^3).
$$
Moreover  
\[\begin{split}
\int_{\Omega} \lambda f_\eps'(\omega_{\eps}) \ph_{\eps} \wt Z_{\eps} dx & = \int_{B(\xi,\rho_0)} e^{U_\eps}Z_{0,\eps} \ph_\eps \, dx +O(\eps^2\alpha^3)+ O(\alpha^3 \|\Gamma_\eps\|_{L^1(B(\xi,\rho_1)\setminus B(\xi,\rho_0) )})\\
&= o(\alpha^3),  
\end{split}
\]
and 
\[\begin{split}
\int_{\Omega} e^{U_n} Z_{j,\eps} \wt Z_{\eps} dx &  = \int_{\R^2} e^{\bar U } Z_{j} Z_0 dy  +O\bra{\int_{\R^2 \setminus B(0,\frac{\rho_0}{\de})} e^{\bar U} dx }\\
& = \frac{8}{3}\pi \de_{ij} + O(\delta^2 \rho_{0}^{-2}). 
\end{split}
\]
By Lemma \ref{RinBrho0} and Lemma \ref{RinBrho1}, we get 
\[\begin{split}
\int_{\Omega} R \wt Z_n dx  & = \int_{B(\xi,\rho_0)} R Z_{0,n} dx +  O ( \|R\|_{L^1(B(\xi,\rho_1) \setminus B(\xi,\rho_0))}) \\
& =  \alpha^3 \int_{B(0,\rho_0)} e^{\bar U} \left( 2\bar U + \bar U^2 \right) Z_0 dy + O\left( \alpha^4 \int_{\R^2} e^{\bar U} (1+\bar U^4) dy \right) +  o ( \alpha^4) \\
& = 16\pi \alpha^3  \left( \log\Big( \frac{8}{\mu^2}\Big) - 2 \right)+O(\alpha^4).
\end{split}
\] 
Finally, we have that 
$$
\int_{\Omega} N(\ph) \wt Z_{\eps} dx = O(\|N(\ph)\|_{\eps}) =O(\alpha^5).
$$
Then, testing  \eqref{FinalEqPhi} against $\wt Z_\eps$ and using \eqref{EstKi}, one gets 
$$
0=  16\pi \alpha^3  \left( \log\Big( \frac{8}{\mu^2}\Big) - 2 \right)+ \frac{8}{3}\pi k_{0,\eps} + o(\alpha^3),
$$
from which we get \eqref{FindMu}. 
\begin{Step} Let us prove 
\begin{equation}\label{FindXi}
\sum_{j=0}^2\kappa_{j,\eps} \int_{\Omega} e^{U_\eps} Z_{j,\eps} \DD{u_\eps}{x_i} dx = - 8\pi  \alpha \DD{v_\eps}{x_i}(\xi)  + O(\alpha^2),\ i=1,2,
\end{equation}
\end{Step} 

\medskip
We multiply \eqref{FinalEqu} and  $\frac{ \partial u_{\eps}}{\partial x_i}$. Applying the  Pohozaev identity (see e.g. \cite[Proposition 2, Proof of Step 1]{Rey}), we obtain 
\begin{equation}\label{PID}
 - \frac{1}{2}\int_{\partial \Omega} \DD{u_\eps}{x_i} \frac{\partial u_{\eps}}{\partial \nu} \nu_i\, d\sigma = {\lambda } \int_{\Omega} f_\eps(u_\eps) \frac{ \partial u_{\eps}}{\partial x_i} dx + \sum_{j=0}^2 \kappa_{j,\eps} \int_{\Omega} e^{U_n} Z_{j,\eps} \frac{ \partial u_{\eps}}{\partial x_i} dx_i.
\end{equation}
Since $u_\eps = 0$ on $\partial \Omega$, the divergence theorem yields
\begin{equation}\label{Identity1}
\begin{split}
\int_{\Omega} f_\eps(u_\eps) \frac{ \partial u_{\eps}}{\partial x_i} dx &= \int_{\Omega} \frac{d}{dx_i} \left(\int_0^{u_\eps(x)} f_\eps(t) dt \right) dx  \\ &  = \int_{\partial \Omega} \nu_i \left(\int_0^{u_\eps(x)} f_\eps(t) dt \right) d\sigma=0.
\end{split}
\end{equation}
By   \eqref{C1 bound}, the definition of $u_\eps$ and $\omega_\eps$, Lemma \ref{Lemma weps & zeps},  Lemma \ref{LastRegion}, we have 
$$
\DD{u_\eps}{\nu} = -\DD{v_\eps}{\nu} +   \alpha \DD{}{\nu}(8\pi G_{\xi}-w_\eps ) + O(\alpha^2)
$$
on $\partial \Omega$. Thus, keeping in mind that $|\nabla v_\eps|$, $ |\nabla w_\eps|$ and $|\nabla G_{\xi}|$ are uniformly bounded on $\partial \Omega$ (see Lemma \eqref{Lemma veps} and \eqref{Lemma weps & zeps}) and that $\DD{u_\eps}{x_i} = \DD{u_\eps}{\nu}\nu_i$, we obtain  
\begin{equation}\label{Identity3}
\begin{split} 
\int_{\partial \Omega} \DD{u_\eps}{x_i}\DD{u_\eps}{\nu} \, d\sigma &= \int_{\partial \Omega} \DD{v_\eps}{x_i} \DD{v_\eps}{\nu}  \, d\sigma +  2 \alpha \int_{\partial \Omega} \DD{v_\eps}{x_i}\DD{}{\nu}(w_\eps-8\pi G_{\xi})   \, d\sigma  +O(\alpha^2).
\end{split}
\end{equation}
Applying the Pohozaev identity to $v_\eps$ and arguing as in \eqref{Identity1}, we get that 
\begin{equation}\label{Identity4}
\int_{\partial \Omega}  \DD{v_\eps}{x_i} \DD{v_\eps}{\nu} \,  d\sigma  = -2 \lambda  \int_{\Omega} f_\eps(v_\eps) \DD{v_\eps}{
 x_i} dx = 0. 
\end{equation}
Integrating by parts and noting that $-\Delta \DD{v_\eps}{x_i} = \lambda f'_\eps(v_\eps) \DD{v_\eps}{x_i} $ in $\Omega$, we get 
\[\begin{split}
\int_{\partial \Omega} \DD{v_\eps}{x_i}\DD{}{\nu}(w_\eps-8\pi G_{\xi})  d\sigma & =  \int_{ \Omega}\left( \DD{v_\eps}{x_i} \Delta w_\eps - w_\eps \Delta \DD{v_\eps}{x_i } \right) dx   + 8\pi \DD{v_\eps}{x_i}(\xi) \\
& \quad  + 8\pi \int_{\Omega} G_{\xi} \Delta \DD{v_\eps}{x_i} dx \\
& = \int_{\Omega}  \DD{v_\eps}{x_i} \underbrace{ \left( \Delta w_\eps + \lambda f'_\eps(v_\eps) w_\eps -8\pi \lambda f'_\eps(v_\eps)G_\xi   \right) }_{=0 \; \text{ by } \eqref{eqweps} } \, dx  + 8\pi \DD{v_\eps}{x_i}(\xi). 
\end{split}
\]
This together with \eqref{Identity3}-\eqref{Identity4} gives
\begin{equation}\label{Identity5}
\frac{1}{2}\int_{\partial \Omega} \frac{\partial u_\eps}{\partial x_i} \frac{\partial u_{\eps}}{\partial \nu} d\sigma = 8\pi \alpha \DD{v_\eps}{x_i}(\xi)  + O(\alpha^2).
\end{equation}
Finally,  \eqref{FindXi} follows by \eqref{PID}-\eqref{Identity1} and \eqref{Identity5}.

\begin{Step} For $i=1,2$,  $j=0,1,2$, we have  
\begin{equation}\label{coefficients}
\int_{\Omega} e^{U_\eps} Z_{j,\eps} \DD{u_\eps}{x_i} \, dx = - \frac{\alpha}{\delta} \left( \frac{16}{3\mu}\pi \delta_{ij} +O(\alpha^2) \right).
\end{equation}
\end{Step}

\medskip
For $i=1,2$ and $j=0,1,2$. Note that we have the identity  
$$
\DD{}{x_i} e^{U_\eps} Z_{j,\eps} =  \frac{e^{U_\eps}}{\delta \mu} \left( \delta_{ij}(Z_{0,\eps}+1)-\delta_{j0} Z_{i,\eps}  -  3 Z_{i,\eps}Z_{j,\eps} \right).
$$
Setting $ \Psi_{ij}:=\delta_{ij}(Z_{0}+1)-\delta_{j0} Z_{i}  -  3 Z_{i}Z_{j}$ and  applying the divergence theorem, we find
\[
\begin{split}
\int_{\Omega} e^{U_\eps} Z_{j,\eps} \frac{ \partial u_{\eps}}{\partial x_i} dx & =  - \int_{\Omega} u_{\eps} \frac{d}{d x_i} \left( e^{U_\eps} Z_{j,\eps} \right)dx  \\
 & = - \frac{1}{\delta \mu} \int_{\Omega} u_\eps  e^{U_\eps}  \left( \delta_{ij}(Z_{0,\eps}+1) -\delta_{j0} Z_{i,\eps} -  3 Z_{i,\eps}Z_{j,\eps}\right) dx \\ 
 & = -\frac{1}{\de \mu}   \int_{ \frac{\Omega -\xi}{\de} } u_\eps(\xi+ \de y)  e^{\bar U} \Psi_{ij} dy\\
 & = -\frac{1}{\de \mu}   \int_{ B(0,\frac{\sigma}{\de}) } u_\eps(\xi+ \de y)  e^{\bar U} \Psi_{ij} dy + O(\beta \de^{2}), 
\end{split}
\]
where in the last equality we used that
\begin{equation}\label{decay}
u_\eps = O(\beta) \qquad \text{and}  \qquad    e^{\bar U}  \Psi_{ij} = O ( |y|^{-5}),
\end{equation}
for $|y|\ge \frac{\sigma}{\de}$.  By Lemma \ref{expapproxsol} we have
\begin{equation}\label{ExpFinal}
u_{\eps}(\xi+ \delta y ) = \beta +\alpha \bar U(y) +O(\alpha^3)+O(\de |y|),
\end{equation}
for $y\in B(0,\frac{\sigma}{\de})$. Using again \eqref{decay}, we get that  
\[\begin{split}
\int_{ B(0,\frac{\sigma}{\de}) }   e^{\bar U}  \Psi_{ij} dy  & = \underbrace{\int_{ \R^2 }    e^{\bar U}  \Psi_{ij} dy }_{=0} +   O(\de^{3}).
\end{split}
\]
Similarly, we have
\[\begin{split}
\int_{ B(0,\frac{\sigma}{\de}) }  \ov U    e^{\bar U}  \Psi_{ij} dy  & = \int_{ \R^2 } \ov U    e^{\bar U}  \Psi_{ij} dy +   O(\beta^2 \de^{3})\\
& =   \frac{16}{3} \pi \delta_{ij} +   O(\beta^2 \de^{3}),
\end{split}
\]
and \eqref{coefficients} is proved. 
 \begin{Step} Proof of  \eqref{Expki}.
 \end{Step}
 
 \medskip
 Let us set $$
a_{ij,\eps}= a_{ij,\eps}(\xi,\mu):=-\frac{3\mu}{16\pi} \frac{\delta}{\alpha}\int_{\Omega} e^{U_\eps}Z_{j,\eps} \DD{u_\eps}{x_i} d\sigma.
$$
According to Step 4, we have  $a_{i0,\eps} = O(\alpha^2)$ if $i=1,2$. Moreover  the matrix $A = (a_{ij,\eps})_{i,j\in\{1,2\}}$ is invertible and its inverse $A^{-1}= (a^{ij}_\eps)_{ij\in \{1,2\}}$ satisfies 
$$a^{ij}_\eps =  \delta_{ij} +O(\alpha^2), \quad i,j=1,2.$$
Then \eqref{Expki} follows by \eqref{FindXi}, just setting
$$
a_{i,\eps}:=\sum_{j=1}^2 a^{ij}_\eps a_{0j,\eps}.
$$

\end{proof}

It is important to point out that \eqref{Expki} cannot be considered a precise uniform expansion of $\kappa_{i,\eps}$. Indeed, \eqref{FindMu} and the rough (but difficult to improve) estimate $a_{i,\eps} = O(\alpha^2)$ yield only  $\kappa_{0,\eps} a_{i,\eps} =O(\alpha^5)$. Since $\de\ll\alpha^5$  it is not possible to identify the leading term in the RHS of \eqref{Expki}.   However, it is clear that the term involving $\DD{v_\eps}{x_i}$ becomes dominant when $\kappa_{0,\eps}$ vanishes. This is enough for our argument. 

\bigskip
{\bf Proof of Theorem \ref{Trm precise} completed}\\
\begin{proof}
Let us consider the vector field 
$$B_\eps(\mu,\xi) = \left( \frac{1}{6\pi \alpha^3}\kappa_{0,\eps},  \frac{2}{3\de \mu}  \left(  \kappa_{1,\eps} + \kappa_{0,\eps}a_{1,\eps}\right),     \frac{2}{3\de \mu} \left(  \kappa_{2,\eps} + \kappa_{0,\eps}a_{2,\eps}\right) \right).$$
By construction, for any $\eps>0$, $B_\eps$ depends continuously on $\mu$ and $\xi$. Moreover,  thanks to  \eqref{FindMu}, \eqref{Expki} and Lemma \ref{Lemma veps}, we  have
$$
B_\eps \to \bar B(\mu,\xi) :=  \left(2- \log\Big( \frac{8}{\mu^2}\Big), \nabla u_0 (\xi) \right)
$$
as $\eps \to 0$, uniformly for $\mu \in \mathcal U$ and $\xi \in B(\xi_0,\sigma)$. 
By assumption \ref{A2}, $\bar B$ has  a $C^0$-stable zero at the point $(\mu_0,\xi_0)$, with $\mu_0 = \sqrt{8}e^{-1}$. Then, for $\eps$ small enough, there exist $\xi = \xi(\eps)\to \xi_0$, $\mu = \mu(\eps)\to\mu_0$ as $\eps\to0$ such that $B_\eps(\mu(\eps) ,\xi(\eps))=0 $. Clearly, this is equivalent to   $\kappa_{i,\eps,\mu(\eps),\xi(\eps)}=0$, $i =0,1,2$.  That concludes the proof.
\end{proof}

\appendix
\renewcommand\thesection{}
\section{Appendix A. The proof of Lemma \ref{Lemma parameters}}
\begin{proof}
The third equation in \eqref{parameters} allows to write $\delta$ as a function of $\alpha,\beta,\eps,\mu,\xi$:
$$
\log \frac{1}{\delta^2} = \frac{\beta}{2\alpha}+ \frac{V_{\eps,\alpha,\xi}(\xi)}{2\alpha} - \frac{c_{\mu,\xi}}{2},
$$
and the second equation in \eqref{parameters} gives $\alpha$ as a function of $\beta,\eps,\mu,\xi$:
$$\alpha=(2\beta+\beta^\eps+\eps\beta^\eps)^{-1}.$$
Then, (after a simple computation) it is sufficient to prove that  there exists $\beta = \beta(\eps,\mu,\xi)$   such that   
\begin{equation}\label{system1}
\begin{aligned}&\frac1\beta\left(\log \lambda  +\frac{c_{\mu,\xi}}{2}\right)+ 2\frac{\log \beta }\beta+ \underbrace{\left(\frac12\beta^{\eps} -u_0(\xi)\right)}_{:=\theta_\eps(\xi,\mu)}
-\left(V_{\eps,\alpha,\xi}(\xi)-u_0(\xi)\right)\\
&
+\frac{\log\left(2+\beta^{\eps-1}+\eps\beta^{\eps-1}\right)}\beta-\frac12\eps\beta^\eps-\frac12V_{\eps,\alpha,\xi}(\xi)\left(\beta^{\eps-1}+\eps\beta^{\eps-1}\right)=0.\end{aligned}
\end{equation}
 
Now, we choose $\beta^\eps:=2u_0(\xi)+\theta_\eps(\xi,\mu)$ with $\|\theta_\eps \|_{C^0(\overline{B(\xi_0,\sigma)}\times\overline{\mathcal U})}$ so small that
$$2u_0(\xi)+\theta_\eps(\xi,\mu)\ge\eta>1\ \hbox{in}\ \overline{B(\xi_0,\sigma)}\times\overline{\mathcal U}.$$ This is possible because of \eqref{sigma small}. With this choice we have 
$\frac1\beta=O\left(\eta^{-\frac1\eps}\right)$. It is easy to show that \eqref{system1} has a solution $\theta_\eps$ because of a simple fixed point argument. Indeed \eqref{system1} rewrites as $\theta_\eps=\mathcal T(\theta_\eps)$
where $\mathcal T$ is a contraction mapping on the ball $$\left\{\theta_\eps \in C^0(\overline{B(\xi_0,\sigma)}\times\overline{\mathcal U})\ : \|\theta_\eps \|_{C^0(\overline{B(\xi_0,\sigma)}\times\overline{\mathcal U})}\le \rho_\eps\right\},$$ where
$\rho_\eps:=\rho\min\left\{\frac1\eps\eta^{-\frac1\eps},\|v_\eps-u_0\|_{C^0(\overline\Omega)}\right\} $   for some $\rho>0$
and $\rho_\eps\to0$ as $\eps\to0.$ Here we use the expression of $V_{\eps,\alpha,\xi}(\xi)$ in \eqref{Veps} and (ii) of Lemma \ref{Lemma veps}.
\end{proof}

\section{Appendix B. A Stampacchia type estimate}
\renewcommand\thesection{\Alph{section}}
In this section we prove domain-independent estimates for solutions of the Poisson equation $-\Delta u = f$, under Dirichlet boundary conditions, with $f\in L^p(\Omega)$ and $p$ approaching $1$. Our strategy is based on the Stampacchia method.

\begin{lemma}[\cite{Stamp}, Lemma 4.1]\label{Stampacchia}
Let $\psi: \R^+\mapsto \R^{+}$ be a nonincreasing function. Assume that there exist $M>0, \gamma>0$, $\delta>1$ such that 
$$
\psi(h)\le \frac{M \psi(k)^\delta}{(h-k)^{\gamma}}\qquad \forall\; h>k>0. 
$$
Then $\psi(d)=0$, where $d=M^\frac{1}{\gamma}\psi(0)^{\frac{\de-1}{\gamma}}2^\frac{\de}{\de-1}$. 
\end{lemma}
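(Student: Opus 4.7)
The plan is a standard Stampacchia-type iteration. First I would introduce the increasing sequence $k_n := d(1-2^{-n})$, so that $k_0=0$, $k_n \nearrow d$, and the gaps satisfy $k_{n+1}-k_n = d\, 2^{-(n+1)}$. Applying the hypothesis with $h=k_{n+1}$ and $k=k_n$ produces the recursion
$$
\psi(k_{n+1}) \le \frac{M\, 2^{\gamma(n+1)}}{d^\gamma}\,\psi(k_n)^{\de},
$$
so the task becomes to show geometric decay of $\psi(k_n)$ along this sequence.

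Next I would prove by induction the bound $\psi(k_n) \le \psi(0)\, 2^{-sn}$ with $s := \gamma/(\de-1)$. The exponent $s$ is forced by matching powers of $2^n$ in the inductive step: inserting the hypothesis into the recursion, the $n$-dependent factors cancel precisely when $s(\de-1)=\gamma$. After this cancellation, the inductive step reduces to the single $n$-independent inequality
$$
M\,\psi(0)^{\de-1}\, 2^{\gamma\de/(\de-1)}\le d^\gamma,
$$
which holds with equality for the value $d=M^{1/\gamma}\psi(0)^{(\de-1)/\gamma}2^{\de/(\de-1)}$ given in the statement.

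Finally, since $s>0$ (because $\de>1$), the bound implies $\psi(k_n)\to 0$. By monotonicity of $\psi$ and $k_n\le d$, we conclude $\psi(d)\le \lim_{n\to\infty}\psi(k_n) = 0$, which is the claim. The only non-routine point is identifying the exponent $s=\gamma/(\de-1)$; once this ansatz is made, everything is algebraic, and there is no real obstacle. I would emphasize that $\de>1$ is essential: it is exactly what ensures $s>0$ and drives the exponential decay, whereas the borderline case $\de=1$ would yield only a logarithmic balance and the conclusion could fail.
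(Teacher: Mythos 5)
Your proof is correct. The paper cites this lemma from Stampacchia's original 1965 paper without reproducing an argument, so there is no in-paper proof to compare against; your dyadic iteration with $k_n = d(1-2^{-n})$ and the inductive ansatz $\psi(k_n)\le \psi(0)2^{-sn}$, $s=\gamma/(\delta-1)$, is precisely the standard (and essentially Stampacchia's) argument, and the algebra checks out: the choice of $d$ makes the $n$-independent constraint hold with equality, and monotonicity plus $k_n\nearrow d$ closes the argument.
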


Let $\Omega\subseteq \R^{2}$ be a bounded smooth domain. For any $q>1$, let $S_q(\Omega)$ be the Sobolev's constant for the embedding of $H^1_0(\Omega)$ in $L^q(\Omega)$, namely 
$$
S_q(\Omega)  = \inf_{u\in H^1_0(\Omega)} \frac{\| u\|_{H^1_0(\Omega)}}{\|u\|_{L^q(\Omega)}}. 
$$ 
It is known that $0<S_q(\Omega)<+\infty$ and that  (see {\cite{RW}} Lemma 2.2)
$$
\lim_{q\to +\infty} \sqrt{q} S_q(\Omega) = \sqrt{8\pi e}.  
$$

\begin{trm}
Let $\Omega$  be a bounded  smooth domain. For  $p>1$, $f\in L^p(\Omega)$, the unique solution $u\in H^1_0(\Omega)$ of the equation 
$-\Delta u = f$ satisfies 
\[
\|u\|_{L^\infty(\Omega)} \le  4S_{\frac{3p+1}{p-1}}(\Omega)^{-2}\|f\|_{L^p}|\Omega|^{\frac{p^2-1}{3p^2+p}}. 
\]
\end{trm}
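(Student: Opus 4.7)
The plan is to apply the classical Stampacchia truncation method. For $k>0$, let $A_k := \{x \in \Omega \; : \; |u(x)|>k\}$ and $\psi(k) := |A_k|$, and introduce the truncation $G_k(t) := \mathrm{sign}(t)(|t|-k)_+$. Since $G_k(u) \in H^1_0(\Omega)$ and $\nabla G_k(u) = \chi_{A_k}\nabla u$, testing the equation $-\Delta u = f$ against $G_k(u)$ gives
$$\int_{\Omega} |\nabla G_k(u)|^2 \, dx = \int_{A_k} f \, G_k(u) \, dx.$$

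Next, I would set $q := \frac{3p+1}{p-1}$ and apply Hölder twice to bound the right-hand side by $\|f\|_{L^p} \|G_k(u)\|_{L^q} \, |A_k|^{1 - \frac{1}{p} - \frac{1}{q}}$ (using that $q > p'$), and bound the left-hand side from below by $S_q(\Omega)^{2}\|G_k(u)\|_{L^q}^2$ via the Sobolev embedding. Combining these two inequalities yields
$$\|G_k(u)\|_{L^q} \le S_q(\Omega)^{-2} \|f\|_{L^p} \, |A_k|^{1 - \frac{1}{p} - \frac{1}{q}}.$$
For any $h>k>0$, the inclusion $A_h \subseteq A_k$ and the pointwise estimate $|G_k(u)|\ge h-k$ on $A_h$ give $(h-k)\psi(h)^{1/q} \le \|G_k(u)\|_{L^q}$. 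Raising to the $q$-th power produces the recursive inequality
$$\psi(h) \le \frac{M \, \psi(k)^{\delta}}{(h-k)^{\gamma}}, \qquad \gamma := q, \quad \delta := q\,\frac{p-1}{p} - 1 = \frac{2p+1}{p}, \quad M := S_q(\Omega)^{-2q}\|f\|_{L^p}^{q},$$
where $\delta > 1$ since $p>1$.

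Now Lemma \ref{Stampacchia} gives $\psi(d)=0$, i.e.\ $\|u\|_{L^\infty(\Omega)}\le d$, with $d = M^{1/\gamma} \psi(0)^{(\delta-1)/\gamma} \, 2^{\delta/(\delta-1)}$. Plugging in the chosen $q$, a direct calculation yields $M^{1/\gamma} = S_q(\Omega)^{-2}\|f\|_{L^p}$ and the key exponent
$$\frac{\delta-1}{\gamma} \;=\; \frac{(p+1)/p}{(3p+1)/(p-1)} \;=\; \frac{p^2-1}{3p^2+p},$$
while $\delta/(\delta-1) = (2p+1)/(p+1) < 2$, so that $2^{\delta/(\delta-1)}<4$. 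Combined with $\psi(0)\le|\Omega|$, this yields the stated bound.

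The proof is essentially routine once the parameters are chosen correctly; the only genuinely forward-looking step is the choice $q = \frac{3p+1}{p-1}$. This is not an arbitrary constant but is reverse-engineered from the desired exponent $\frac{p^2-1}{3p^2+p}$ on $|\Omega|$: solving the equation $\frac{\delta-1}{\gamma} = \frac{p^2-1}{3p^2+p}$ under the constraints $\gamma = q$, $\delta = q(p-1)/p - 1$ fixes $q$ uniquely. The only mild subtleties are checking $q > p'$ (so that the intermediate Hölder estimate is valid) and $\delta>1$ (so that Stampacchia's lemma applies), both of which hold for every $p>1$.
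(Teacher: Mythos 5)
Your proof is correct and follows essentially the same Stampacchia truncation argument as the paper (the paper parametrizes the Hölder split by an auxiliary exponent $q\in(1,\frac{2p}{p+1})$ and then sets it to the midpoint $\frac{3p+1}{2(p+1)}$, whose conjugate is exactly your $\frac{3p+1}{p-1}$; your three-factor Hölder with that Sobolev exponent produces an identical recursive inequality and the same final constants).
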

\begin{proof}
We want to apply the previous lemma to the function 
$$
\psi(k):= |A_k|, \quad A_k:=\{x\in \Omega \;:\; |u(x)|>k\}. 
$$ 
For any $k>0$, let us consider the function 
$$
v_k(x):=\begin{cases}
0 &  |u(x)|\le k,\\
u(x)-k & u(x)>k,\\
-u(x)-k  & u(x)<-k.
\end{cases}
$$
Note that $v_k\in H^1_0(\Omega)$ and $|\nabla v_k| = |\nabla u|\chi_{A_k}$.  If we test the equation against $v_k$ we get 
\begin{equation}\label{App1}
\int_{\Omega} \nabla u \cdot \nabla v_k\, dx = \int_{\Omega} f v_k dx.
\end{equation}
For any $q\in (1,p)$ H\"older's inequality gives
\begin{equation}\label{App2}
\int_{\Omega} f v_k\, dx = \int_{A_k} f v_k \, dx\le \|f\|_{L^q(A_k)} \|v_k\|_{L^{\frac{q}{q-1}}(A_k)}  \le \|f\|_{L^p} |A_k|^\frac{p-q}{pq} \|v_k\|_{L^{\frac{q}{q-1}}(A_k)}.
\end{equation}
By Sobolev's inequality, we have that 
\begin{equation}\label{App3}
\int_{\Omega}\nabla u \cdot \nabla v_k\, dx = \int_{A_k} |\nabla v_k|^2 dx \ge S_{\frac{q}{q-1}}(\Omega)^{2} \|v_k\|_{L^\frac{q}{q-1}}^2.
\end{equation}
By \eqref{App1}-\eqref{App3}, we have 
$$
\|v_k\|_{L^\frac{q}{q-1}} \le S_{\frac{q}{q-1}}(\Omega)^{-2}\|f\|_{L^p} |A_k|^\frac{p-q}{pq}.
$$
Now, for any $h>k$, we have that $A_h\subseteq A_k$ and  $v_k \ge (h-k)$ in $A_h$, hence 
$$
\int_{\Omega} |v_k|^\frac{q}{q-1} dx = \int_{A_k} v_k^\frac{q}{q-1} dx \ge \int_{A_h} v_k^\frac{q}{q-1} dx   \ge (h-k)^\frac{q}{q-1}|A_h|.
$$
In conlcusion, we find 
$$
(h-k)|A_h|^{\frac{q-1}{q}} \le S_{\frac{q}{q-1}}(\Omega)^{-2}\|f\|_{L^p} |A_k|^\frac{p-q}{pq},
$$
or, equivalently, 
$$
\psi(h)\le \frac{S_{\frac{q}{q-1}}(\Omega)^{-\frac{2q}{q-1}}\|f\|_{L^p}^\frac{q}{q-1} \psi(k)^{\frac{p-q}{p(q-1)}}}{(h-k)^\frac{q}{q-1}}. 
$$
Then, we are in position to apply  Lemma \ref{Stampacchia} to $\psi$ with $M= S_{\frac{q}{q-1}}(\Omega)^{-\frac{2q}{q-1}}\|f\|_{L^p}^\frac{q}{q-1}$, $\gamma= \frac{q}{q-1}$, and $\delta= \frac{p-q}{p(q-1)}$.  For this, we need to impose that $\delta= \frac{p-q}{p(q-1)}$, that is $q< \frac{2p}{p+1}$. Note that $1<\frac{2p}{p+1}<p$. According to Stampacchia's Lemma, we have 
$$
\psi (d)=0 \qquad \text{ where } \qquad d=M^\frac{1}{\gamma}\psi(0)^{\frac{\de-1}{\gamma}}2^\frac{\de}{\de-1} = S_{\frac{q}{q-1}}^2\|f\|_{L^p}|\Omega|^{\frac{2p-q(p+1)}{pq}} 2^{\frac{p-q}{2p-q(p+1)}}. 
$$
This implies that
$$
\|u\|_{L^\infty(\Omega)} \le  S_{\frac{q}{q-1}}(\Omega)^{-2}\|f\|_{L^p}|\Omega|^{\frac{2p-q(p+1)}{pq}} 2^{\frac{p-q}{2p-q(p+1)}}. 
$$
This is true for any choice of $q\in (1,\frac{2p}{p+1})$. If we take for example $p$ the midpoint of  $(1,\frac{2p}{p+1})$, that is $q= \frac{1}{2}+\frac{p}{p+1}= \frac{3p+1}{2(p+1)}$, then we find that 
$$
\frac{q}{q-1} = \frac{3p+1}{p-1}, \quad \frac{2p-q(p+1)}{pq} = \frac{p^2-1}{3p^2+p},  \quad 
\frac{p-q}{2p-q(p+1)} = \frac{2p+1}{p+1}\le 2,
$$
and we get the conclusion.
\end{proof}

\begin{cor}\label{CorStamp1}
Given $K>0$ and $p>1$, there exists a constant $C=C(K,p)$ such that, for any domain $\Omega\subseteq \R^2$ with $|\Omega|\le K$ and any $f \in L^p(\Omega)$ the unique solution $u\in H^1_0(\Omega)$ of $-\Delta u = f$ satisfies 
$$
\|u\|_{L^\infty(\Omega)} \le C \|f\|_{L^p(\Omega)}. 
$$ 
\end{cor}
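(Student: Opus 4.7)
The plan is to simply combine the previous theorem with a uniform lower bound on the Sobolev constant $S_q(\Omega)$ on the class of planar domains of prescribed measure. Setting $q = q(p) := \frac{3p+1}{p-1}$, the previous theorem directly gives
\[
\|u\|_{L^\infty(\Omega)} \le 4\,S_q(\Omega)^{-2}\,|\Omega|^{\frac{p^2-1}{3p^2+p}}\,\|f\|_{L^p(\Omega)}.
\]
Since $|\Omega| \le K$, the factor $|\Omega|^{\frac{p^2-1}{3p^2+p}}$ is bounded by $K^{\frac{p^2-1}{3p^2+p}}$, which depends only on $K$ and $p$. The main task is therefore to bound $S_q(\Omega)^{-2}$ uniformly from above on $\{\Omega : |\Omega|\le K\}$.

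For this I would use Schwarz symmetrization: for every $u\in H^1_0(\Omega)$ the Schwarz symmetrized function $u^*\in H^1_0(\Omega^*)$ (where $\Omega^*$ is the open ball centered at the origin with $|\Omega^*|=|\Omega|$) satisfies P\'olya--Szeg\H{o}'s inequality $\|\nabla u^*\|_{L^2(\Omega^*)} \le \|\nabla u\|_{L^2(\Omega)}$ and the equimeasurability identity $\|u^*\|_{L^q(\Omega^*)}=\|u\|_{L^q(\Omega)}$. This immediately yields $S_q(\Omega) \ge S_q(\Omega^*)$. A straightforward dilation argument shows that, in dimension $2$, $S_q(B_R) = R^{-2/q}\,S_q(B_1)$ (the Dirichlet energy is scale invariant in $\R^2$, while the $L^q$ norm scales like $R^{2/q}$). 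Writing $|\Omega^*|=\pi (R^*)^2$ with $R^*=\sqrt{|\Omega|/\pi}\le \sqrt{K/\pi}$ gives
\[
S_q(\Omega)^{-2} \le S_q(\Omega^*)^{-2} = (R^*)^{4/q}\,S_q(B_1)^{-2} \le \bra{\tfrac{K}{\pi}}^{2/q}\,S_q(B_1)^{-2}.
\]

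Combining the two estimates produces a constant $C = C(K,p)$ with the required property, namely
\[
C := 4\,\bra{\tfrac{K}{\pi}}^{2/q}\,S_q(B_1)^{-2}\,K^{\frac{p^2-1}{3p^2+p}}.
\]
There is no real obstacle here: the only subtle point is to avoid any dependence on the shape of $\Omega$, and Schwarz symmetrization removes it cleanly. An alternative avoiding symmetrization would be to extend $u$ by zero to a fixed ball $B\supset \Omega$ (whose radius can be chosen depending only on $K$, say by translating $\Omega$ inside a large enough ball) and then invoke $S_q(B)>0$, but the symmetrization argument is shorter and intrinsic.
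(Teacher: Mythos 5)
Your main argument is correct and fills in a step the paper leaves implicit (the paper states this corollary without proof as an immediate consequence of the preceding theorem). The theorem reduces matters to bounding $S_q(\Omega)^{-2}$ with $q=\frac{3p+1}{p-1}$ uniformly over domains of measure at most $K$, and your Schwarz-symmetrization argument, $S_q(\Omega)\ge S_q(\Omega^*)$ via P\'olya--Szeg\H{o} and equimeasurability, followed by the two-dimensional scaling $S_q(B_R)=R^{-2/q}S_q(B_1)$, accomplishes exactly this with constants depending only on $K$ and $p$. This is the natural argument and, importantly, it really uses only the measure constraint and not any further geometric control on $\Omega$.

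The parenthetical alternative at the end, however, does not work as stated: a planar domain with $|\Omega|\le K$ can have arbitrarily large diameter (a thin rectangle of dimensions $n\times 1/n$ has area $1$ and diameter at least $n$), so one cannot in general translate $\Omega$ into a ball whose radius depends only on $K$. If you want an alternative that avoids full symmetrization, the right substitute is to invoke a Faber--Krahn-type statement (the Sobolev constant $S_q$ over domains of fixed measure is minimized by the ball), but that is essentially symmetrization in disguise; the naive ``fit $\Omega$ inside a fixed ball'' route genuinely fails under the hypotheses as written.
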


\begin{cor}\label{CorStamp2}
Given $K>0$, there exist $p_0=p_0(K)$ and $C=C(K)$ such that, for any $1<p<p_0$, any domain $\Omega\subseteq \R^2$ with $|\Omega|\le K$, and any $f \in L^p(\Omega)$, the unique solution $u\in H^1_0(\Omega)$ of $-\Delta u = f$ satisfies 
$$
\|u\|_{L^\infty(\Omega)} \le \frac{C}{p-1} \|f\|_{L^p(\Omega)}. 
$$ 
\end{cor}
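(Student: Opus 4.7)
The plan is to apply the theorem just proved, which yields the explicit bound
\[
\|u\|_{L^\infty(\Omega)}\le 4\, S_{q(p)}(\Omega)^{-2}\, \|f\|_{L^p(\Omega)}\, |\Omega|^{\frac{p^2-1}{3p^2+p}},\qquad q(p):=\frac{3p+1}{p-1}=3+\frac{4}{p-1},
\]
and then to track the $\Omega$- and $p$-dependences uniformly for $|\Omega|\le K$. The measure factor is harmless: the exponent $\frac{p^2-1}{3p^2+p}$ is continuous in $p$ and vanishes as $p\to 1^+$, so $|\Omega|^{\frac{p^2-1}{3p^2+p}}\le C_1(K)$ for $1<p<2$ and $|\Omega|\le K$. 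The whole issue therefore reduces to establishing a bound of the form $S_{q(p)}(\Omega)^{-2}\le \frac{C_2(K)}{p-1}$.

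To obtain this bound with a constant depending only on $K$, the strategy is to reduce to a ball via Schwarz symmetrization. If $\Omega^*$ denotes the Euclidean ball centered at the origin with $|\Omega^*|=|\Omega|$, then for any $u\in H^1_0(\Omega)$ the rearrangement $u^*\in H^1_0(\Omega^*)$ preserves the $L^q$-norm and does not increase the Dirichlet energy (P\'olya--Szeg\H{o} inequality), so $S_q(\Omega)\ge S_q(\Omega^*)$. Writing $\Omega^*=\lambda B_1$ with $\lambda=\sqrt{|\Omega|/\pi}\le \sqrt{K/\pi}$, the standard scaling identity $S_q(\lambda B_1)=\lambda^{-2/q}S_q(B_1)$ yields
\[
S_q(\Omega)^{-2}\le \lambda^{4/q}\, S_q(B_1)^{-2}\le C_3(K)\, S_q(B_1)^{-2},
\]
uniformly for $q\ge 2$.

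The last ingredient is the sharp Sobolev asymptotic $\lim_{q\to+\infty}\sqrt{q}\, S_q(B_1)=\sqrt{8\pi e}$ recalled from \cite[Lemma 2.2]{RW}: it gives $S_q(B_1)^{-2}\le C\, q$ for all $q\ge q_0$ with $q_0$ absolute. Choosing $p_0$ so small that $q(p)\ge q_0$ for $1<p<p_0$ (possible since $q(p)\to+\infty$) and using the explicit bound $q(p)\le \frac{5}{p-1}$ near $p=1$, one concludes $S_{q(p)}(\Omega)^{-2}\le \frac{C_4(K)}{p-1}$, which, combined with the control of the measure factor, yields the desired estimate. There is no genuine obstacle in the argument; the only point requiring care is the uniformity in $\Omega$, which is exactly what the rearrangement step secures.
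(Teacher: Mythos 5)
Your proof is correct, and it fills in a deduction that the paper leaves entirely implicit (neither corollary is given a proof). The paper evidently intends the corollary to follow from the theorem together with the Ren--Wei asymptotic $\lim_{q\to+\infty}\sqrt{q}\,S_q(\Omega)=\sqrt{8\pi e}$, but that limit is stated for a \emph{fixed} domain, whereas the corollary requires the lower bound $S_q(\Omega)\ge c/\sqrt{q}$ (for $q$ above an absolute threshold) to hold \emph{uniformly} over all domains with $|\Omega|\le K$; the rate of convergence in Ren--Wei is not a priori domain-independent. Your Schwarz-symmetrization step is precisely the right device for this: the P\'olya--Szeg\H{o} inequality gives $S_q(\Omega)\ge S_q(\Omega^*)$, and the scaling identity $S_q(\lambda B_1)=\lambda^{-2/q}S_q(B_1)$ reduces the whole problem to the single reference ball $B_1$, to which Ren--Wei applies with a genuinely absolute threshold $q_0$ and constant. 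The remaining algebra is in order: the exponent $\frac{p^2-1}{3p^2+p}$ lies in $[0,\tfrac13)$, so $|\Omega|^{\frac{p^2-1}{3p^2+p}}\le\max(1,K)^{1/3}$; for $q\ge 2$ and $\lambda\le\sqrt{K/\pi}$ one has $\lambda^{4/q}\le\max(1,K/\pi)$; and $q(p)=\frac{3p+1}{p-1}<\frac{5}{p-1}$ for $1<p<\tfrac43$, which after choosing $p_0<\tfrac43$ with $q(p_0)\ge q_0$ yields the stated $O\!\left(\frac{1}{p-1}\right)$ bound. In short, your argument both proves the corollary and makes rigorous the uniformity in $\Omega$ that the paper's citation of Ren--Wei does not by itself deliver.
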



\begin{thebibliography}{9}

\bibitem{AdiPositive} \textsc{Adimurthi}, \emph{Positive solutions of the semilinear Dirichlet problem with critical growth in the unit disc in $\R^2$}, Proc. Indian Acad. Sci., Mathematical Sciences 99 (1989), 49-–73, \url{https://doi.org/10.1007/BF02874647}.


\bibitem{a} \textsc{Adimurthi}, \emph{Existence of positive solutions of the semilinear Dirichlet problem with critical growth for the $n$-Laplacian}, Ann. Scuola Norm. Sup. Pisa Cl. Sci. (4) 17 (1990), 393--413, \url{http://www.numdam.org/item/ASNSP_1990_4_17_3_393_0}.

\bibitem{AD} \textsc{Adimurthi, O. Druet}, \emph{Blow-up analysis in dimension 2 and a sharp form of Trudinger-Moser inequality}, Comm. in PDE. 29  (2004), 295--322, \url{https://doi.org/10.1081/PDE-120028854}.


\bibitem{akg} \textsc{Adimurthi, A. Karthik, J. Giacomoni}, \emph{Uniqueness of positive solutions of a n-Laplace equation in a ball in Rn with exponential nonlinearity}, J. Differential Equations 260 (2016), no. 11, 7739–-7799,\url{https://doi.org/10.1016/j.jde.2016.02.002}.

\bibitem{ap1} \textsc{Adimurthi, S. Prashanth}, \emph{Failure of Palais-Smale condition and blow-up analysis for the critical exponent problem in $\mathbb{R}^2$}, Proc. Indian Acad. Sci. Math. Sci.  107  (1997), no.3, 283–-317, \url{https://doi.org/10.1007/BF02867260}.

\bibitem{ap2} \textsc{Adimurthi, S. Prashanth}, \emph{Critical exponent problem in  $\mathbb R^2$-border-line between existence and non-existence of positive solutions for Dirichlet problem}, Adv. Differential Equations 5 (2000), no. 1-3, 67--95, \url{https://projecteuclid.org:443/euclid.ade/1356651379}. 


\bibitem{AS} \textsc{Adimurthi, M. Struwe}, \emph{Global compactness properties of semilinear elliptic equations with critical exponential growth}, J. Funct. Anal.  175  (2000), 125-–167, \url{https://doi.org/10.1006/jfan.2000.3602}. 

\bibitem{ay1} \textsc{Adimurthi, S.L. Yadava}, \emph{Multiplicity results for semilinear elliptic equations in a bounded domain of $\mathbb{R}^2$ involving critical exponents}, Ann. Scuola Norm. Sup. Pisa Cl. Sci. (4)  17  (1990) 481–-504, \url{http://www.numdam.org/item/ASNSP_1990_4_17_4_481_0}.

\bibitem{ay2} \textsc{Adimurthi, S.L. Yadava}, \emph{Nonexistence of Nodal Solutions of Elliptic Equations with Critical Growth in $\mathbb{R}^2$}, Trans. Amer. Math. Soc. 332 (1992), 449–-458, \url{https://doi.org/10.2307/2154041}. 


\bibitem{BP} \textsc{S. Baraket, F. Pacard,} \emph{Construction of singular limits for a semilinear elliptic equation in dimension 2,} Calc. Var. Partial Differential Equations 6 (1998), no. 1, 1--38, \url{https://doi.org/10.1007/s005260050080}.

\bibitem{BartP} \textsc{D. Bartolucci, A. Pistoia}, \emph{Existence and qualitative properties of concentrating              solutions for the sinh-Poisson equation}, IMA J. Appl. Math. 72 (2007), no 6, 706--729, \url{https://doi.org/10.1093/imamat/hxm012}.


\bibitem{CL} \textsc{W. Chen, C. Li}, \emph{Qualitative properties of solutions to some nonlinear elliptic equations in $\R^2$},
Duke Math. J. 71 (1993), 427--439, \url{https://doi.org/10.1215/S0012-7094-93-07117-7}.


\bibitem{dkm} \textsc{M. del Pino, M. Kowalczyk, M. Musso}, \emph{Singular limits in Liouville-type equations}, Calc. Var. Partial Differential Equations 24 (2005), no. 1, 47--81, \url{https://doi.org/10.1007/s00526-004-0314-5}.

\bibitem{dmr} \textsc{M. Del Pino, M. Musso, B. Ruf},  \emph{New solutions for Trudinger–Moser critical equations in $\R^2$}, J. Funct. Anal. 258 (2010), no.2, 421–-457, \url{https://doi.org /10.1016 /j.jfa.2009.06.018}.


\bibitem{Druet}\textsc{O. Druet}, \emph{Multibumps analysis in dimension 2: quantification of blow-up levels}, Duke Math. J. 132 (2006), no. 2, 217-–269, \url{https://doi.org/10.1215/S0012-7094-06-13222-2}.

\bibitem{dmmt} \textsc{O. Druet, A. Malchiodi, L. Martinazzi, P.D. Thizy}, \emph{Multi-bumps analysis for Trudinger–Moser nonlinearities II-existence of solutions of high energies}, in preparation.

\bibitem{DT} \textsc{O. Druet, P.-D. Thizy}, \emph{Multi-bumps analysis for Trudinger–Moser nonlinearities I-quantification and location of concen-tration points}, J. Eur. Math. Soc. (2018), in press, arXiv :1710 .08811.

\bibitem{egp} \textsc{P. Esposito, M. Grossi, A. Pistoia}, \emph{On the existence of blowing-up solutions for a mean field equation}, Ann. Inst. H. Poincaré Anal. Non Lin\'eaire 22 (2005), no. 2, 227--257, \url{https://doi.org/10.1016/j.anihpc.2004.12.001}.

\bibitem{GN} \textsc{M. Grossi, D. Naimen}, \emph{Blow-up analysis for nodal radial solutions in Moser-Trudinger critical  equations in $\R^2$}, to appear in Annali della Scuola Normale Superiore di Pisa, \url{https://doi.org/10.2422/2036-2145.201707_006}.


\bibitem{gn2} \textsc{M. Grossi, D. Naimen}, \emph{Nondegeneracy of positive solutions to Moser-Trudinger problems in symmetric domains}, in preparation.


\bibitem{Lio} \textsc{J. Liouville}, \emph{Sur l'equation aux differences partielles $\DD{^2\log \lambda}{z\partial \bar z} \pm \frac{\lambda}{2a^2} =0 $}, J. Math. Pures Appl. 36 (1853). 71--72, 


\bibitem{ManciniThizy} \textsc{G. Mancini, P.-D. Thizy}, \emph{Glueing a peak to a non-zero limiting profile for a critical
Moser–Trudinger equation}, J. Math. Anal. Appl. (2019), url:
\url{https://doi.org/10.1016/j.jmaa.2018.11.084}.

\bibitem{mipi} \textsc{A. M. Micheletti, A. Pistoia},  \emph{Generic properties of singularly perturbed nonlinear elliptic problems on Riemannian manifold}, Adv. Nonlinear Stud. 9 (2009), no. 4, 803--813, \url{https://doi.org/10.1515/ans-2016-6010}.

\bibitem{Moser} \textsc{J.K. Moser}, \emph{A sharp form of an inequality by N. Trudinger}, Indiana Univ. Math. J. 20 (1970/71), 1077-1092, \url{https://doi.org/10.1512/iumj.1971.20.20101}.


\bibitem{Pohozaev} \textsc{S. I. Pohozaev},  \emph{The Sobolev embedding in the case $pl = n$}, Proc. of the Technical Scientific Conference on Advances of Scientific Research 1964-1965, Mathematics Section, (Moskov. Energet. Inst., Moscow), (1965), 158-170.


\bibitem{RW} \textsc{X. Ren, J. Wei}, \emph{Counting Peaks of Solutions to Some Quasilinear Elliptic Equations with Large Exponents}, J. Differential Equations  117  (1995), 28--55, \url{https://doi.org/10.1006/jdeq.1995.1047}.

\bibitem{Rey} \textsc{O. Rey}, \emph{The role of the Green's function in a nonlinear elliptic
              equation involving the critical {S}obolev exponent}, J. Funct. Anal. 89 (1990), no 1, 1--52, \url{https://doi.org/10.1016/0022-1236(90)90002-3}.


\bibitem{Stamp} \textsc{G. Stampacchia}, \emph{Le problème de Dirichlet pour les équations elliptiques du second ordre à coefficients discontinus,} Ann. Inst. Fourier (Grenoble) 15  (1965), 189--258, \url{http://www.numdam.org/item?id=AIF_1965__15_1_189_0}.

\bibitem{Trudinger} \textsc{N.S. Trudinger}, \emph{On imbeddings into Orlicz spaces and some applications}, J. Math. Mech. 17 (1967), 473--483, \url{https://doi.org/10.1512/iumj.1968.17.17028}.



\end{thebibliography}
\end{document}